\newtheorem{theorem}{Theorem}[section]
\newtheorem{lemma}[theorem]{Lemma}
\newtheorem{prop}[theorem]{Proposition}
\newtheorem{corro}[theorem]{Corollary}
\theoremstyle{definition}
\newtheorem{definition}[theorem]{Definition}
\newtheorem*{definition*}{Definition}
\newtheorem{example}[theorem]{Example}
\theoremstyle{remark}
\newtheorem{remark}[theorem]{Remark}
\numberwithin{equation}{section}
\DeclareMathAlphabet{\mathsl}{OT1}{cmss}{m}{sl}
\SetMathAlphabet{\mathsl}{bold}{OT1}{cmss}{bx}{sl}
\newcommand{\al}{\ensuremath{\alpha}}
\newcommand{\ga}{\ensuremath{\gamma}}
\newcommand{\de}{\ensuremath{\delta}}
\newcommand{\ze}{\ensuremath{\zeta}}
\newcommand{\la}{\ensuremath{\lambda}}
\newcommand{\si}{\ensuremath{\sigma}}
\newcommand{\ve}{\ensuremath{\varepsilon}}
\newcommand{\vr}{\ensuremath{\varrho}}
\newcommand{\vp}{\ensuremath{\varphi}}
\newcommand{\De}{\ensuremath{\Delta}}
\newcommand{\Th}{\ensuremath{\Theta}}
\newcommand{\Om}{\ensuremath{\Omega}}
\newcommand{\cD}{\ensuremath{\mathcal D}}
\newcommand{\cF}{\ensuremath{\mathcal F}}
\newcommand{\cL}{\ensuremath{\mathcal L}}
\newcommand{\cM}{\ensuremath{\mathcal M}}
\newcommand{\bbN}{\ensuremath{\mathbb N}}
\newcommand{\bbR}{\ensuremath{\mathbb R}}
\newcommand{\bbY}{\ensuremath{\mathbb Y}} 
\newcommand{\bfE}{\ensuremath{\mathbf E}}
\DeclareMathOperator{\mean}{\mathbb{E}}
\DeclareMathOperator{\prob}{\mathbb{P}}
\DeclareMathOperator{\tr}{\mathrm{tr}}
\DeclareMathOperator{\rk}{\mathrm{rk}}
\DeclareMathOperator{\diag}{\mathrm{diag}}
\newcommand{\ldef}{\ensuremath{\mathrel{\mathop:}=}}
\newcommand{\mAct}{m_{\textnormal a}}
\newcommand{\mDor}{m_{\textnormal d}}
\newcommand{\indicator}{\ensuremath{\mathbbm{1}}}
\begin{document}

\title[Dormancy and fluctuating environments]{A branching process model for dormancy and seed banks in randomly fluctuating environments}


\author{Jochen Blath}
\address{Technische Universit\"at Berlin}
\curraddr{Strasse des 17. Juni 136, 10623 Berlin}
\email{blath@math.tu-berlin.de}
\thanks{}


\author{Felix Hermann}
\address{Technische Universit\"at Berlin}
\curraddr{Strasse des 17. Juni 136, 10623 Berlin}
\email{hermann@math.tu-berlin.de}
\thanks{}

\author{Martin Slowik}
\address{Technische Universit\"at Berlin}
\curraddr{Strasse des 17. Juni 136, 10623 Berlin}
\email{slowik@math.tu-berlin.de}
\thanks{}


\keywords{Dormancy; persistence; Bienaym\'e-Galton-Watson process; branching process; seed bank; fitness}

\date{\today}


\begin{abstract}
  The goal of this article is to contribute towards the conceptual and quantitative understanding of the evolutionary benefits for (microbial) populations to maintain a seed bank (consisting of dormant individuals) when facing fluctuating environmental conditions. To this end, we compare the long term behaviour of `1-type' Bienaym\'e-Galton-Watson branching processes (describing populations consisting of `active' individuals only) with that of a class of `2-type' branching processes, describing populations consisting of `active' and `dormant' individuals. All processes are embedded in an environment changing randomly between `harsh' and `healthy' conditions, affecting the reproductive behaviour of the populations accordingly. For the 2-type branching processes, we consider several different switching regimes between active and dormant states. We also impose overall resource limitations which incorporate the potentially different `production costs' of active and dormant offspring, leading to the notion of `fair comparison' between different populations, and allow for a reproductive trade-off due to the maintenance of the  dormancy trait. Our switching regimes include the case where switches from active to dormant states and vice versa happen randomly, irrespective of the state of the environment (`spontaneous switching'), but also the case where switches are triggered by the environment (`responsive switching'), as well as combined strategies. It turns out that there are rather natural scenarios under which either switching strategy can be super-critical, while the others, as well as complete absence of a seed bank, are strictly sub-critical, even under `fair comparison' wrt.\ available resources. In such a case, we see a clear selective advantage of the super-critical strategy, which is retained even under the presence of a (potentially small) reproductive trade-off.  Mathematically, our results rest on the control of Lyapunov exponents related to random matrix products (governed by the dynamics of the environment and the switching regimes).  While their exact computation in general is considered a `notoriously difficult problem', we provide some insight into the structure of switching regimes related to dormancy that allow us to achieve this control at least in important special cases.  Our rigorous results extend and complement earlier theoretical work on dormancy (and the related theory of `phenotypic switches'), including Kussel and Leibler (2005) and Malik and Smith (2008) in the deterministic set-up, and the multi-type branching process model by Dombry, Mazza and Bansaye (2011) in the stochastic set-up.
\end{abstract}

\maketitle


\section{Introduction}
\label{sec:intro}
\subsection{Biological motivation}
Dormancy is an evolutionary trait that comes in many guises and has evolved independently multiple times across the tree of life.  In particular, it is ubiquitous among microbial communities.  As a general definition, we say that an individual exhibits a \emph{dormancy trait} if it is able to enter a reversible state of vanishing metabolic activity.  It has been observed that a large fraction of microbes on earth is currently in a dormant state, thus creating \emph{seed banks} consisting of inactive individuals (see e.g.\ Lennon and Jones \cite{LJ11} and Shoemaker and Lennon \cite{SL18} for recent overviews).  A common ecological and evolutionary explanation for the emergence of the corresponding complex dormancy traits is that the maintenance of a seed bank serves as a bet-hedging strategy to ensure survival in fluctuating and potentially unfavourable environmental conditions.  Recent theory has also shown that dormancy traits can already be beneficial in competing species models in the presence of sufficiently strong competitive pressure for limited resources (even in otherwise stable environments), see \cite{BT20}.  However, maintaining a dormancy trait is costly and comes with a substantial trade-off: For example, microbes need to invest resources into resting structures and the machinery required for switching into and out of a dormant state, which are then unavailable for reproduction. 

Dormancy also has implications for the pathogenic character of microbial communities and plays an important role in human health.  For example, dormancy in the form of persister cells can lead to chronic infections, since these cells can withstand antibiotic treatment (\cite{BMCKL04, FGH17, Le10}).  Further, dormancy, both on the level of individual cells as well as the tumor level, plays a crucial role in cancer dynamics \cite{EI19}.  In all of the above situations external treatment can be seen as a form of environmental stress for the pathogens.

Hence, improving the conceptual and quantitative understanding of the mechanisms leading to fitness advantages for individuals with a dormancy trait in fluctuating environments, incorporating the potentially different costs of forming active and dormant offspring (and potential reproductive trade-offs due to the maintenance of dormancy traits), seems to be a worthwhile task. 

\subsection{Deterministic vs.\ stochastic population dynamic modeling and known results}
In mathematical population dynamics, there is a classical dichotomy between deterministic and stochastic modeling, and both approaches have been employed extensively and successfully in the past.  Unsurprisingly, this traditional distinction is also present in the rather recent literature on populations exhibiting dormancy (or more the more general concept of `phenotypic switches').  We now briefly review some important approaches and results in both areas.

In the last two decades, dormancy-related population dynamic modeling based on \emph{deterministic dynamical systems} has expanded rather rapidly, often with a focus on phenotypic plasticity in microbial communities, see e.g.\ \cite{BHMP02, BMCKL04, KKBL05, KL05, MS08, FW18}.  The important paper by Balaban et.\ al.~\cite{BMCKL04} for example describes `persistence' (which can be seen as a form of dormancy) as a phenotypic switch, and several of the above papers deal with models incorporating various switching strategies and fluctuating environments.  Kussel et.\ al.~\cite{KKBL05} consider periodic antibiotic treatment (and also treat a stochastic version of their model via simulation), and Kussel and Leibler \cite{KL05} incorporate randomly changing environments, however under the condition that the random changes are slow.  Fitness is typically measured in terms of the (maximal) Lyapunov exponents of the underlying dynamical systems, which is often difficult to evaluate analytically in the presence of random environments.  Kussel and Leibler approximate the Lyapunov exponent under a `slow environment condition', reducing the model to an essentially one-dimensional system, which is a strategy that we will meet again in different forms in the sequel.  These models have been taken up in a mathematical article by Malik and Smith \cite{MS08}, which provide a set of rigorous results regarding the maximal Lyapunov exponents of dynamical systems explicitly incorporating dormancy, considering both stochastic and responsive switches in (periodically) changing environments.  They also compare these to the corresponding results for populations without dormancy trait (so-called `sleepless population').  However, for truly random environments, they do not provide exact representations for the maximal Lyapunov exponents and instead give relatively simple (yet useful) bounds.

Recently, also \emph{stochastic (individual-based) models} for seed banks and dormancy have gained increasing attention, in particular in \emph{population genetics} (\cite{KKL01, BEGCKW15, BGCKW16, BGKW18}).  However, these models are mainly concerned with genealogical implications of seed banks and typically require constant population size (without random environment).  In \emph{population dynamics}, while there are interesting recent simulation studies such as \cite{LFL17}, rigorous mathematical modeling and results are still relatively rare.  Here, a suitable framework for individual-based seed bank models with fluctuating population size is given by the theory of multi-type branching processes (in random environments).  Indeed, dormancy has been described in a brief example in the book by Haccou, Jagers and Vatutin \cite[Example 5.3]{HJV07} as a 2-type branching process, which served as a motivation for this paper.  For \emph{quiescence} (which is a similar concept as dormancy), a multi-type branching-process model has been proposed in \cite{AJ11}, including a simulation study.  On the theoretical side, in the context of \emph{phenotypic switches}, Dombry, Mazza and Bansaye \cite{DMB11} and Jost and Wang \cite{JW14}, again building on motivation from \cite{KL05}, have developed a branching-process based framework for phenotypic plasticity and obtained interesting rigorous results on the optimality of switching strategies in random environments.  However, their set-ups and results, though closely related, do not focus on dormancy, and only partially cover the reproductive and switching strategies that we are going to discuss below (we will explicitly comment on the differences wrt.\ our model and results in the sequel).  They are able to determine Lyapunov exponents in random environments for their model, but again under a condition which essentially restricts the problem to a one-dimensional system.  Finally, regarding {phenotypic switches} specifically related to cancer biology, interesting models and results, in an interdisciplinary framework, have been provided in \cite{B+16, BB18, G+19}.  These papers contain mathematical and simulation based results in the very concrete situation of immune-therapy of melanoma, where cancer cells exhibit phenotypic plasticity.  However, the modeling approaches do not cover random environments.  Yet they show the power and need for stochastic individual-based modeling in such situations.

\subsection{Modeling approach and organization of the present paper}
Our approach, motivated by the example in \cite{HJV07}, is based on a 2-type branching process $(Z_n)=(Z^1_n, Z^2_n)$ with $Z^1_n$ denoting the active and $Z^2_n$ denoting the dormant individuals at time/generation $n$, which we embed in a random environment that is described by a stochastic process $(E_n)$ and governs the respective sequence of random reproductive laws $(Q(E_n))$.  As in \cite{KL05, MS08, DMB11}, we will discuss results related to both stochastic/spontaneous as well as responsive switching strategies.  Further, we also consider mixed strategies.  We aim for explicit results under `fair comparison' regarding resource limitations including potentially different costs for active and dormant offspring, and also in comparison with a 1-type branching process without dormancy trait (`sleepless case'), expressing qualitative and quantitative \emph{fitness advantages} in terms of the maximal Lyapunov exponent.

By providing a model tailored to dormancy in a random environment, we close a gap related to the multi-type branching process models and results provided in \cite{DMB11} and \cite{JW14} related to phenotypic switching, which only partially cover our dormancy models and results, and extend and refine results of \cite{MS08} which explicitly model dormancy in the deterministic dynamical systems case, but with a smaller set of switching strategies and few results for truly random environments.  Additionally, we pay particular attention to reproductive costs related to dormancy.

All models and results will be introduced and discussed in Section~\ref{sec:models}.  We observe that there are natural parameter regimes under which either the spontaneous or the responsive switching strategies, or even a mixture of strategies, will be fit, while all the others are detrimental.  We discuss the corresponding parameter regimes in detail and visualize them in certain important cases, see e.g.\ Figures~\ref{fig:phase_diagram}, \ref{fig:phase_diagram_cc} and \ref{fig:phase_diagram:fair} below.  This shows that already in our relatively simple random environment (involving only two states), dormancy leads to a rather rich picture regarding the long-term behaviour of the embedded branching processes. 

However, while our results will capture several prototypical scenarios corresponding to both stochastic/spontaneous and responsive switching (and mixtures), we are still far from being able to provide a mathematically complete classification in the full space of switching strategies.  One theoretical reason for this is that computing the maximal Lyapunov exponent of a random multiplicative sequence of positive matrices, which is the mathematical core of the problem, is infeasible in general (see e.g.\ \cite{BL85, Le82} for an overview of the mathematical theory), and works only if the underlying matrices exhibit additional algebraic properties.

Hence, a further aim of this paper is to provide a small review of current methods to compute / estimate maximal Lyapunov exponents.  The reason which allows \cite{DMB11} and \cite{JW14} to treat the responsive switching regime in their papers is that their assumptions reduce the system to an essentially one-dimensional case, which interestingly has a similar effect as the `slow variation assumption' of Kussel and Leibler \cite{KKBL05}, and we will investigate similar cases.  While the exponents are easily accessible for the corresponding `rank-1 matrices' (and, of course, scalars), spontaneous/stochastic switching strategies can a priori involve both `rank-1' and `rank-2 matrices'.  We show that while special cases of the stochastic switching regime can again be treated with the rank-1 approach, for the general stochastic switching regime involving rank-2 matrices, techniques used in \cite{MS08} are available, which lead at least to bounds on the Lyapunov exponents.  We also provide further bounds and estimators.  These theoretical considerations can be found in Section~\ref{sec:theory}.

Finally, in Section~\ref{sec:discussion}, we discuss some open questions and potential further steps in modeling and analysis of dormancy and seed banks in random environments, from a somewhat theoretical perspective.

\section{Models and main results}
\label{sec:models}
Recall that for a classical (1-type) Bienaym\'e-Galton-Watson process, say $X=(X_n)$, it is assumed that individuals die and reproduce independently of each other according to some given common offspring distribution $Q_X$ on $\bbN_0$.  We extend this framework by introducing a second component that acts as a reservoir of \emph{dormant} individuals, also referred to as \emph{seed bank}.  Moreover, we allow the offspring distribution in each generation $n$ to depend on the state of a random environment process $E=(E_n)$.  This gives rise to a particular class of 2-type Bienaym\'e-Galton-Watson processes in random environment that we introduce formally in Definition~\ref{def:bgwdre} and which will be the main object of study in this paper. However, in the sequel, we will also discuss more general $p$-type branching processes (for $p \ge 1$) in random environments, so that we will first introduce the corresponding general notation, which is standard in the theory of multi-type branching processes.

\subsection*{Notation}
Let $E=(E_n)_{n \in \bbN_0}$ be a stationary and ergodic Markov chain on some probability space $(\Om, \cF, \prob)$ taking values in some measurable space $(\Om', \cF')$ and denote by $\pi_E$ its stationary distribution.  Such a process will be called \emph{random environment process}.  For $p \in \bbN$, we write $\cM_1(\bbN_0^p)$ to denote the space of probability measures on $\bbN_0^p$, and set $\Th \ldef \{(Q^{1}, \ldots, Q^{p}) : Q^{i} \in \cM_1(\bbN_0^p)\}$.  Elements of $\Th$ will be interpreted as the collection of the $p$ \emph{offspring distributions} on $\bbN_0^p$ (one for each type).  An infinite sequence $\Pi = (Q(E_1), Q(E_2), \ldots)$ generated by $(E_n)$ and a random variable $Q\!: \Om' \to \Th$ will be called sequence of \emph{random offspring distributions} with respect to the {environment process} $(E_n)$.  Finally, a sequence of $\bbN_0^p$-valued random variables $Z_0, Z_1, \ldots$ will be called a \emph{$p$-type Bienaym\'e-Galton-Watson process in random environment} ($p$-type BGWPRE), if $Z_0$ is independent of $\Pi$ and if for each given realization $(e_1, e_2, \dots)$ of $E$ (and thus also of $\Pi$) the process $Z = (Z_n)_{n \in \mathbb{N}_0}$ is a Markov chain whose law satisfies
\begin{align*}
  \cL\big(
    Z_{n} \mid Z_{n-1} = z,\, \Pi = (Q(e_1), Q(e_2), \ldots)
  \big)
  \;=\;
  \cL\bigg(
    \sum_{i=1}^p \sum_{j=1}^{z^i}\ze_{j}^{i}
  \bigg),
\end{align*}
for every $n \in \bbN$ and $z=(z^1, \dots, z^p) \in \bbN_0^p$, where the $(\ze_j^i : i \in \{1, \ldots, p\}, j \in \bbN)$ are independent random variables taking values in $\bbN_0^p$, and for each $i \in \{1, \ldots, p\}$, the $\ze_1^i, \ze_2^i, \ldots$ are identically distributed according to $Q^i(e_n)$.  In the language of branching processes, if the state of the environment at time $n$ is $e_n \in \Om'$, then each of the $Z_n^i$ individuals of type $i$ alive at time $n$ produces offspring according to the probability distribution $Q^i(e_n)$, independent of the offspring production of all the other individuals.  For notational clarity, we will often write $Q_Z$ to denote the random variable $Q$ that is used in the definition of a branching process $Z$. 

We are now ready to define the class of branching processes modeling \emph{dormancy}: 
\begin{definition}\label{def:bgwdre}
  With the above notation (for $p=2$), a 2-type BGWPRE $Z=(Z_n)$ will be called a \emph{Bienaym\'e-Galton-Watson process with dormancy in random environment} $(E_n)$, abbreviated BGWPDRE, if, $\prob$-almost surely,
  \begin{align}\label{eq:def:BGWDRE}
    Q_Z^2(E_n)[\{(0,0),(1,0),(0,1)\}]
    \;=\;
    1
    \qquad \forall\, n \in \mathbb{N}_0.
  \end{align}
  Particles of type 1 are called \emph{active} and particles of type 2 are called \emph{dormant}. 
\end{definition}
Note that Condition \eqref{eq:def:BGWDRE} ensures that a (dormant) type 2 particle can either switch its state to type 1 (active), remain in the dormant state 2, or die --  no other transitions are possible. There is no restriction on the offspring reproduction of active type 1 particles other than the following first moment condition.

Throughout we assume, for $\prob$-a.e.\ realization $(e_1, e_2, \ldots)$ of $(E_n)$ and any $n \in \mathbb{N}$,  that the distribution $Q(e_n) \in \Th$ is such that the corresponding random variables $\ze^i=(\ze^1, \ldots, \ze^p)$ distributed according to $Q(e_n)$ satisfy $\mean[\ze^i] < \infty$ for all $i \in \{1, \ldots, p\}$.  Moreover, we write
\begin{align*}
  m_n^{i,j}
  \;\equiv\;
  m^{i,j}(e_n)
  \;\ldef\;
  \mean\!\big[Z_{n+1}^j \mid Z_n = (\delta_{ik})_k,\, \Pi_n = Q_Z(e_n) \big].
\end{align*}
to denote the expected number of offspring of type $j$ produced by a single particle of type $i$ in generation $n$ in the environment $Q_Z(e_n)$, and we denote by 
\begin{align*}
  M_n \;\equiv\; M(e_n) \;\ldef\; (m_n^{i,j})_{i,j}
\end{align*}
the corresponding mean matrix.  Suppose that, for any $n \in \bbN$, the matrix $M_n$ is irreducible.  Then, by the  Perron-Frobenius-Theorem, the spectral radius $\vr_n \equiv \vr(M_n)$ of $M_n$ is a simple eigenvalue with $|\la| \leq \vr_n$ for any (other) eigenvalue $\la$ of $M_n$.

\subsection{Branching processes with dormancy in constant environment}\label{sec:mot-exp}
As a gentle warm-up, we first compare the survival probabilities and extinction times of a classical 1-type BGWP with those of a 2-type BGWPDRE in the absence of environmental fluctuations (in this case, we use the abbreviation BGWPD).  For simplicity, we further restrict ourselves to the binary branching case (following the set-up of Example 5.3 in \cite{HJV07}), which can be thought of as a model for bacterial reproduction via binary fission resp.\ sporulation as exhibited e.g.\ by \emph{Bacillus subtilis}, and summarize several standard results (that nevertheless will be proved in the appendix for the reader's convenience). These results will then serve as a motivation and reference point for our later results involving fluctuating environments, which will also deal with more general reproductive mechanisms.

Let $p \in (0,1)$, $X_0 = 1$ and $X = (X_n)_{n \in \bbN_0}$ be a 1-type BGWP with offspring distribution $Q_X = p \de_2 + (1-p)\de_0$, where $\de$ denotes the Dirac measure. This mechanism can be seen as a caricature of reproduction via cell division:  Every individual in each generation independently either splits in two (cell division) with probability $p$ or dies with probability $1-p$.

Furthermore, for $\ve \in (0,p)$, $b,w \in (0,1)$ and $d \in (0, 1-w)$, let $Z_0 = (1,0)$ and $Z = (Z_n)_{n \in \bbN_0}$ be a 2-type BGWPD with offspring distribution, $Q_Z$, given by
\begin{align*}
  &Q_Z^1(0,0) \;=\; 1-p + \ve,&
  &Q_Z^2(1,0) \;=\; w,\\
  &Q_Z^1(2,0) \;=\; (p-\ve) b,&
  &Q_Z^2(0,0) \;=\; d,\\
  &Q_Z^1(0,1) \;=\; (p-\ve)(1-b),&
  &Q_Z^2(0,1) \;=\; 1-w-d.
\end{align*}
\begin{figure}
  \includegraphics[width=12cm]{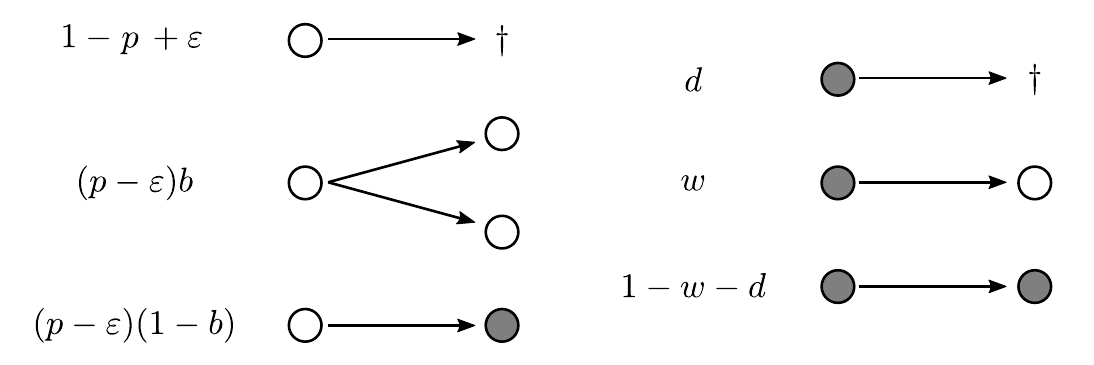}
  \caption{\label{fig:seedbank-penalty-model}
    Offspring distribution of $Z$ for active (white) individuals on the left and dormant (gray) individuals on the right.
  }
\end{figure}
Figure~\ref{fig:seedbank-penalty-model} illustrates the model.  The parameters can be interpreted as follows: $p-\ve > 0$ is the probability with which an active individual either exhibits a reproductive or a switching event. In this case, a binary split will happen with probability $b$ (reproduction), and a switching event into a dormant state (e.g.\ by sporulation) with probability $1-b$. Note that  a `switch' can be thought of as the simultaneous death of an active individual and the corresponding birth of a dormant individual. With probability $q+\ve$, an active individual will die. Dormant individuals resuscitate (``wake up'') with probability $w$, and die with probability $d$, otherwise they stay in their dormant state (with probability $1-w-d$). Note that for $\ve = 0$ and $b=1$, the active component $(Z^1_n)$ of $(Z_n)$ equals $(X_n)$ in distribution.  Hence, $\ve$ can be seen as a way of incorporating a reproductive trade-off that arises from the maintenance costs of the dormancy trait, delivering a reduced splitting (and hence increased death) probability in comparison to the 1-type model. Additionally, note that in our model the potential to switch into dormancy also reduces the reproductive capability, since entering the seed bank is only possible during a `reproduction-or-switching event' at a chance of $1-b$. 

From the following more general result, which we will prove in the Appendix, we obtain a comparison of long-term survival behaviour of $X$ and $Z$.
\begin{prop}\label{prop:motivating-exp}
  Let $X = (X_n)$ be a 1-type BGWP, and $Z = (Z_n)$ a BGWPD with $X_0 = 1$ and $Z_0 = (1,0)$.  Assume that the offspring distributions $Q_X$ and $Q_Z$, respectively, are of finite variance with $Q^2_Z(0,0) > 0$ and $\prob[Z^2_1>0] > 0$.  Set 
  \begin{align*}
    \mu_X \;=\; \mean[X_1]
    \qquad \text{and} \qquad
    \mu_{Z,1} \;=\; \mean[Z_1^1+Z_1^2]
  \end{align*}
  and denote by
  \begin{align*}
    &\si_{Z} \;\ldef\; \prob\!\Big[\lim_{n \to \infty} Z^1_n+Z^2_n > 0 \Big],
    &&& 
    &\si_{X} \;\ldef\; \prob\!\Big[\lim_{n \to \infty} X_n>0 \Big],
    \\[1ex]
    &
    T_{Z} \;\ldef\; \inf\big\{n \geq 1 \;:\; Z_n = 0 \big\}
    &\text{and}&&
    &T_{X} \;\ldef\; \inf\big\{n \geq 1 \;:\; X_n = 0 \big\}
  \end{align*}
  the survival probabilities and extinction times of $Z$ and $X$, respectively.  If
  \begin{align}
    Q_X(k)
    \;\geq\;
    \sum_{\ell=0}^k Q_Z(k-\ell,\ell)
    \label{eq:fair-comp-mot-exp}
  \end{align}
  for all $k \geq 1$, then the following statements hold:
  \begin{enumerate}
  \item If $\mu_X>1$, then $\si_{Z} < \si_{X}$.
    \\[-.5em]
  \item If $\mu_X=1$, then $\si_{Z} = \si_{X}=0$ and $\, \mean\!\big[T_{Z}\big] < \mean\!\big[T_{X}\big] = \infty$.
    \\[-.5em]
  \item If $\mu_X<1$, then $\si_{Z} = \si_{X} = 0$. However, $Q^2_Z$ can be chosen in such a way that $Q^2_Z(1,0)>0$, $Q^2_Z(0,0)>0$ and for some $n_0\in\bbN$
    \begin{align}  
      \prob\!\big[T_{Z}>n \big] &\;>\; \prob\!\big[T_{X} > n\big]
      &&\text{for all } n \geq n_0.
      \label{eq:mot-ex-tails}
    \end{align}
  \end{enumerate}
\end{prop}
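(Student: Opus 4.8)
My plan is to reduce the dormancy process $Z$ to an ordinary one-type process by \emph{integrating out the seed bank}. Since $d \ldef Q_Z^2(0,0) > 0$ and $w \ldef Q_Z^2(1,0) > 0$ (the latter forced by irreducibility of the mean matrix $M$), a lone dormant particle almost surely in finitely many generations either dies or resuscitates, releasing in the latter case exactly one active particle; the resuscitation probability is $a \ldef w/(w+d) \in (0,1)$. Contracting each dormant sojourn to the active particle it may eventually release turns the active genealogy of $Z$ into a $1$-type Bienaym\'e-Galton-Watson process $\widehat Z$ with offspring probability generating function $\widehat f(s) = f_1\bigl(s,\,as+(1-a)\bigr)$, where $f_1(s_1,s_2) \ldef \mean\bigl[s_1^{Z_1^1}s_2^{Z_1^2} \mid Z_0 = (1,0)\bigr]$; moreover $Z$ dies out exactly on the event that $\widehat Z$ does (a finite active genealogy spawns only finitely many dormant particles, each of which a.s.\ resolves in finite time), so $\si_Z = 1 - \widehat q$ with $\widehat q$ the least fixed point of $\widehat f$ in $[0,1]$. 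Writing $R \ldef \cL(Z_1^1 + Z_1^2 \mid Z_0 = (1,0))$ for the law of the total offspring of an active particle, with generating function $f_R(s) = f_1(s,s)$, hypothesis \eqref{eq:fair-comp-mot-exp} in particular gives the stochastic domination $R \preceq Q_X$ and hence $f_R \ge f_X$ on $[0,1]$; combined with $as+1-a\ge s$ and monotonicity of $f_1$ in its second argument, this yields
\[
  \widehat f(s) \;\ge\; f_R(s) \;\ge\; f_X(s), \qquad s\in[0,1].
\]
Differentiating at $s=1$ gives $\widehat\mu \ldef \widehat f'(1) \le \mu_{Z,1} \le \mu_X$, the first inequality being \emph{strict} since $\widehat\mu = m^{1,1} + a\,m^{1,2}$ while $\mu_{Z,1} = m^{1,1}+m^{1,2}$, with $a<1$ and $m^{1,2}>0$ (the latter by $\prob[Z_1^2>0]>0$).

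Given this, part (1) is short: $\mu_X>1$ makes $q_X \ldef 1-\si_X < 1$, and iterating $\widehat f\ge f_X$ from $0$ shows $\widehat q\ge q_X$, i.e.\ $\si_Z\le\si_X$; strictness follows by noting that $\widehat q = q_X$ would make the displayed chain an equality at $s=q_X$, which --- using \eqref{eq:fair-comp-mot-exp} termwise and $q_X\in(0,1)$, the positivity $q_X>0$ holding whenever $Q_X(0)>0$, as in the motivating example --- forces first $R=Q_X$ and then $\widehat f(q_X)=f_R(q_X)$, impossible since $a<1$ and $\prob[Z_1^2>0]>0$ make $\widehat f > f_R$ throughout $(0,1)$. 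For part (2), $\mu_X=1$ forces $\widehat\mu<\mu_{Z,1}\le 1$, so $\widehat Z$ is \emph{strictly} subcritical and $\si_Z=0$, while $\si_X=0$ because a critical (nondegenerate, i.e.\ $Q_X\ne\de_1$) process dies out a.s.; and $\mean[T_X]=\infty$ by Kolmogorov's classical estimate, as $\prob[T_X>n]$ is of order $n^{-1}$ for a critical offspring law of finite variance.

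The genuinely delicate point --- the step I expect to be the main obstacle --- is $\mean[T_Z]<\infty$ in part (2), since the contracted process $\widehat Z$ has discarded exactly the timing information one needs. For this I would set up a direct first-moment recursion on $Z$ itself: with $\tau \ldef \mean[T_Z\mid Z_0=(1,0)]$ and $\tau_{\mathrm d} \ldef \mean[T_Z\mid Z_0=(0,1)]$, conditioning on the first generation and bounding the relevant maxima by the corresponding sums gives
\[
  \tau \;\le\; 1 + m^{1,1}\,\tau + m^{1,2}\,\tau_{\mathrm d}, \qquad (w+d)\,\tau_{\mathrm d} \;\le\; 1 + w\,\tau,
\]
whence $(1-\widehat\mu)\,\tau \le 1 + m^{1,2}/(w+d) < \infty$ because $\widehat\mu<1$. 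To license these manipulations one first works with $T_Z\wedge N$ and passes to the limit $N\to\infty$; the boundedness in $N$ of the truncated expectations $a_N \ldef \mean[T_Z\wedge N\mid Z_0=(1,0)]$ and $b_N \ldef \mean[T_Z\wedge N\mid Z_0=(0,1)]$ follows from $(a_N,b_N)^{\!\top} \le (1,1)^{\!\top} + M\,(a_{N-1},b_{N-1})^{\!\top}$ together with the identity $\det(I-M) = (w+d)(1-\widehat\mu) > 0$, which --- with $M$ irreducible --- forces $\vr(M)<1$, so that $Z$ is in fact subcritical.

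Finally, for part (3): $\mu_X<1$ gives $\widehat\mu\le\mu_X<1$, hence $\si_Z=\si_X=0$ as above. Since \eqref{eq:fair-comp-mot-exp} constrains only the offspring law of active particles, I keep $Q_Z^1$ fixed and choose $Q_Z^2$ with $w=d\ldef(1-\mu_X)/3>0$, so that $Q_Z^2(1,0),Q_Z^2(0,0)>0$ and $1-w-d>\mu_X$. Markov's inequality gives $\prob[T_X>n] = \prob[X_n\ge 1] \le \mean[X_n] = \mu_X^{\,n}$. On the other hand, started from $(1,0)$, on the event $\{Z_1^2\ge 1\}$ --- of probability $c\ldef\prob[Z_1^2\ge 1\mid Z_0=(1,0)]>0$ --- a fixed dormant particle present at time $1$ stays dormant throughout times $1,\dots,n$ with conditional probability $(1-w-d)^{n-1}$, which forces $Z_n\ne 0$; hence $\prob[T_Z>n]\ge c\,(1-w-d)^{n-1}$. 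Since $1-w-d>\mu_X$, the ratio $\prob[T_Z>n]/\prob[T_X>n]$ diverges, so \eqref{eq:mot-ex-tails} holds for all $n$ beyond some $n_0$, completing the plan.
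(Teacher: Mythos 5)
Your argument is correct and reaches all three conclusions, but by a partly different route, so a comparison is worthwhile. For the survival probabilities you and the paper arrive at the \emph{same} one-dimensional fixed-point problem: your contracted generating function $\widehat f(s)=f_1\bigl(s,(ws+d)/(w+d)\bigr)$ is exactly the map $s\mapsto g_1\bigl(s,\tfrac{d+ws}{d+w}\bigr)$ that the paper obtains by eliminating the second coordinate from the two-dimensional fixed-point equation, and your chain $\widehat f\ge f_R\ge f_X$ is the paper's chain $g_1\bigl(s,\tfrac{d+ws}{d+w}\bigr)>g_1(s,s)\ge h(s)$; the genealogical contraction is a probabilistic reading of the same algebra. (Both proofs need $1-\si_X\in(0,1)$ for strictness in (1); you flag this, the paper does not, and the claim can indeed fail when $Q_X(0)=0$: take $Q_X=\tfrac12\de_1+\tfrac12\de_2$ and $Q_Z^1=\tfrac12\de_{(1,0)}+\tfrac12\de_{(1,1)}$, which satisfies \eqref{eq:fair-comp-mot-exp} yet gives $\si_Z=\si_X=1$.) Where you genuinely diverge is in the tail estimates. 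For $\mean[T_Z]<\infty$ in (2) the paper computes the Perron root $\vr(M)$ and rules out $\vr=1$ by inspecting the discriminant; your truncated first-moment recursion is a valid alternative, but the step ``$\det(I-M)>0$ with $M$ irreducible forces $\vr(M)<1$'' is not a correct general principle (consider $M=\bigl(\begin{smallmatrix}3&1\\1&3\end{smallmatrix}\bigr)$): since $\det(I-M)$ is the characteristic polynomial evaluated at $1$, you must also rule out both eigenvalues exceeding $1$, which here follows from $\tr M=m^{1,1}+1-w-d<2$ because $m^{1,1}\le\mu_{Z,1}\le 1$ and $w+d>0$. For (3) your argument is genuinely more elementary than the paper's and arguably preferable: the paper invokes the Athreya--Ney limit theorem for subcritical positively regular multitype processes together with a continuity argument in $(w,d)$ to get $\vr^{-n}\prob[T_Z>n]\to c>0$ with $\vr$ exceeding the decay rate of $\prob[T_X>n]$, whereas you need only Markov's inequality for the upper bound $\prob[T_X>n]\le\mu_X^n$ and the event ``one dormant particle sits still for $n-1$ steps'' for the lower bound $\prob[T_Z>n]\ge c\,(1-w-d)^{n-1}$, avoiding the second-moment machinery entirely.
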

Condition \eqref{eq:fair-comp-mot-exp} ensures that the total amount of offspring of active individuals in $Z$ is stochastically dominated by the amount of offspring in $X$.

Proposition~\ref{prop:motivating-exp} shows that -- at least in the simple binary model -- in the super-critical case $p>1/2$ (i.e. $\mu_X=2p>1$) maintaining a seed bank always leads to a decreased survival probability. Indeed, the reproductive trade-off, incorporated by the penalty $\ve >0$, is always detrimental.  The same holds for the critical regime ($p=1/2$ and $\mu_X=1$), where both processes always go extinct: Here, the expected time to extinction is even finite for the two-type process $Z$.

However, in the sub-critical regime (3), while both processes do go extinct with probability 1, for small $w$ and $d$ (i.e.\ $Q_Z^2(1,0)$ and $Q_Z^2(0,0)$) the population with dormancy trait can be more likely to survive for extended periods of time, since by \eqref{eq:mot-ex-tails} $\prob[T_Z>n]>\prob[T_X>n]$ for $n\geq n_0$, i.e. $\prob[Z^1_n+Z^2_n>0]>\prob[X_n>0]$.  This is in line with basic intuition, since for small $w$ and $d$ individuals spend a long time in the dormant state delaying extinction.

This suggests that the `prolonged survival in the sub-critical regime' effect could lead to a fitness advantage in the presence of a random environment, fluctuating between a \emph{healthy} (super-critical) and a \emph{harsh} (sub-critical) scenario, even if the dormancy trait exhibits a reproductive trade-off in the healthy scenario, since dormancy could potentially compensate for this during harsh times by delaying extinction.  A central goal of this article is to identify and classify scenarios in which this is indeed the case. We thus now extend our models to incorporate such a fluctuating environment.

\subsection{Branching processes with dormancy in randomly fluctuating environment}
As indicated by Proposition~\ref{prop:motivating-exp} above, evolutionary fitness advantages resulting from a dormancy trait may be expected to manifest themselves in the presence of a random environment, where prolonged survival times may help to survive during harsh times. Here, we even expect strong fitness advantages in the sense that dormancy may turn an otherwise (overall) sub-critical process into a super-critical one, even in the presence of reproductive trade-offs.  We now introduce a simple model for a fluctuating environment, randomly oscillating between two states ``$1$'' and ``$2$'', where ``$1$'' corresponds to \emph{healthy} and ``$2$'' to \emph{harsh} conditions, which is identical to the environment given as an example in \cite[Sections~1.1.1 and 1.1.2.]{DMB11}. 
\begin{definition}[Binary random environment]
  \label{def:environment_process}
  Let $s_1, s_2 \in (0,1]$ and $s_1 \cdot s_2 < 1$. Define a discrete-time homogeneous Markov chain $(I_n)$ with values in $\{1,2\}$ via the transition matrix
  \begin{align*}
    P_I
    &\ldef\;
    \begin{pmatrix}
      1-s_1 & s_1\\
      s_2 & 1-s_2
    \end{pmatrix}
  \end{align*}
  where $s_1$ and $s_2$ denote the \emph{environmental switching probabilities}. Further, denote by $\pi_I=(s_2/(s_1+s_2),\ s_1/(s_1+s_2))$ the stationary distribution of $(I_n)$ and let $I_0\sim\pi_I$.
\end{definition}
\begin{remark}
  The initial condition for $I_0$ as well as the assertion $s_1s_2<1$ ensure that $(I_n)$ is stationary and ergodic.  Hence, the process $(I_n)$ is an example for a random environment process $(E_n)$ introduced at the beginning of this Section, taking only two values.  
\end{remark}
In the remainder of this section we will not give complete definitions of any further BGWPDRE's. We will only be interested in the mean matrices, while the exact offspring distributions will typically be irrelevant.  We will also restrict our attention to the above binary random environment. For environmental states $e \in \{1,2\}$, these matrices will be given by
\begin{align*}
  M(e)
  \;=\;
  \begin{pmatrix}
    \mAct^e & \mDor^e\\
    w^e & 1-w^e-d^e
  \end{pmatrix},
\end{align*}
where $\mAct^e$ and $\mDor^e$ represent the average amount of active and dormant offspring of active individuals respectively, while $w^e$ and $d^e$ denote resuscitation (`waking') and death probabilities of dormant individuals.  Our main results will concern two particular examples:

\begin{example}[Switching strategies]\label{ex:switching_strategies}
  The following idealized switching strategies represent important special cases that have been discussed in the literature, see e.g.\ \cite{MS08}, \cite{LJ11}, \cite{DMB11}. In particular, one distinguishes between \emph{responsive switching}, triggered by environmental conditions, and \emph{spontaneous} or \emph{stochastic switching}, which is assumed to happen in each individual with a certain probability, independently of the environmental states and the behaviour of the other individuals.
  \begin{itemize}
  \item[a)] \textbf{Responsive switching}:\\
    We consider the case where individuals behave ``optimally'' in the sense that they invest all their resources into the production of active individuals during the healthy environmental spells (choosing $\mDor^1=0$, $w^1=1-d^1$), while during harsh environmental conditions they invest everything into dormant offspring (choosing $\mAct^2=w^2=0$).  Hence, in this idealized case, the offspring mean matrices are of the form
    \begin{align*}
      M^{\mathrm{res}}(1)
      \;=\;
      \begin{pmatrix}
        m^1 & 0\\
        1-d^1 & 0
      \end{pmatrix}
      \qquad\text{and}\qquad
      M^{\mathrm{res}}(2)
      \;=\;
      \begin{pmatrix}
        0 & m^2\\
        0 & 1-d^2
      \end{pmatrix}
    \end{align*}
    for some $m^e > 0$ and $d^e < 1$.
  \item[b)] \textbf{Stochastic switching:}\\
    Here, the population assumes a reproductive strategy which is independent of its environmental state. This can be modeled by choosing $m_\bullet^1 = m_\bullet > 0$ and $m_\bullet^2 = \al m_\bullet$ for $\al \in [0,1)$ and $\bullet \in \{\text a, \text d\}$.  That way, $\mAct^1/\mDor^1 = \mAct^2/\mDor^2$, which means that active individuals split up their resources into the production of active and dormant offspring in the same way in both environments.  Then, the offspring mean matrices for $e \in \{1,2\}$ equate to
    \begin{align*}
      M^{\mathrm{sto}}(1)
      \;=\;
      \begin{pmatrix}
        \mAct & \mDor\\
        w^1 & 1-w^1-d^1
      \end{pmatrix}
      \qquad\text{and}\qquad
      M^{\mathrm{sto}}(2)
      \;=\;
      \begin{pmatrix}
        \al \mAct & \al \mDor\\
        w^2 & 1-w^2-d^2
      \end{pmatrix}
    \end{align*}
    Note that $\al < 1$ results in a reduced expected number of active offspring in the harsh environment. For $\al = 0$, no active individuals will be born at all during such conditions.
  \item[c)] \textbf{Prescient switching:}\\
    In this strategy, we consider individuals that invest all their resources into the production of dormant individuals during the healthy environmental spells (choosing $\mAct^1=w^1=0$), while during harsh environmental conditions they invest everything into active offspring (choosing $\mDor^2=0$, $w^2=1-d^2$). The corresponding offspring mean matrices are given by
    \begin{align*}
      M^{\mathrm{pre}}(1)
      \;=\;
      \begin{pmatrix}
        0 & m^1 \\ 0 & 1-d^1
      \end{pmatrix}
      \qquad\text{and}\qquad
      M^{\mathrm{pre}}(2)
      \;=\;
      \begin{pmatrix}
        m^2 & 0 \\ 1-d^2 & 0
      \end{pmatrix}
    \end{align*}
    for some $m^e > 0$ and $d^e < 1$.
  \end{itemize}
  In \emph{population genetic models} with seed bank, recently, similar types of switching have been distinguished (spontaneous vs.\ simultaneous switching) and these lead to topologically different limiting coalescent models describing the ancestry of a sample (see \cite{BEGCKW15}, \cite{BGCKW16}, \cite{BGKW18}). Here, in the presence of a random environment, we will see that the right choice of switching strategy can lead to qualitative fitness advantages, depending on the distribution of the environmental process.
\end{example}
Of course, less extreme variants, or even mixtures, of the above switching strategies should be interesting in practice. For example, as reported in \cite{VV15} and \cite{SD15}, phenotypic diversity in {\em Bacillus subtilis} with respect to the `exit from dormancy mechanisms' seems to combine stochastic switching of some individuals with responsive switching due to environmental cues of others on the population level.  However, the special form of the mean matrices in the above `pure'  strategies makes it possible to explicitly compute resp.\ obtain suitable bounds on the corresponding maximal Lyapunov exponents, which are crucial to assess and compare the fitness of the corresponding BGWDREs, as we will see below. Interestingly, these building blocks will later also allow the analysis of certain mixtures of strategies.
\begin{remark}[Comparison to multi-type branching process models considered in \cite{DMB11} and \cite{JW14}]
  Note that our above model is closely related to a very general multi-type branching process in random environment (MBPRE) modeling phenotypic diversity (with many, even a continuum of, possible types) considered in Dombry, Mazza and Bansaye \cite{DMB11}, and Jost and Wang \cite{JW14}, who themselves are inspired by the earlier work of Kussel and Leibler \cite{KL05}.  However, in their models, the authors follow a \emph{ two step procedure}, where in a first step each particle gives birth to a random amount of offspring (depending on its type and the state of the environment), and then in a second step, \emph{independently} of that amount, the offspring particles are fitted (individually) with their new phenotypes.  As the authors point out, this clearly disentangles the birth and ``migration'' (between phenotypes) phases.  The BGWPDRE-model that we propose here is tailored to dormancy and does not disentangle these steps.  This has consequences for the possible switching strategies. In fact it turns out that some of our reproductive strategies presented above are not covered by the framework of \cite{DMB11} and \cite{JW14}.  For example, they do not cover the case that active offspring in the healthy environment can either split into two active offspring (cell division), or switch to a dormant state (e.g.\ by sporulation), as in the example of Section~\ref{sec:mot-exp}, since the phenotype distribution in this case would have to be allowed to depend on the number of offspring of the parent, see also Remark~\ref{rem:discussion:DMB11}.  
\end{remark}
\begin{remark}[Comparison to switching strategies employed in \cite{MS08}]
  Malik and Smith in \cite{MS08} consider a related, but less general switching model. Again, there are two possible environmental states, however, the bad environment here always completely prevents the reproduction of active individuals. Exact analytic expressions for the Lyapunov exponents are obtained only for the case where the environment is deterministic. In the random environment case, still some bounds are provided. It turns out that we can adapt the corresponding methods  to obtain bounds for Lyapunov exponents of our models (cf.\ Remark~\ref{rem:further-bounds}).
\end{remark}

\subsection{Asymptotic growth of BGWPDREs and Lyapunov exponents}\label{sec:fitness_BGWDRE}
Of particular interest is the asymptotic behaviour of the process $Z$.  It is well known, cf.\ \cite{Ka74}, that
\begin{align*}
  \mean\!\big[Z_n \,\big|\, Z_0,\, \Pi = (Q_Z(e_1), Q_Z(e_2), \ldots)\big]
  \;=\;
  Z_0 \cdot M_1 \cdot \ldots \cdot M_n.
\end{align*}
The study of such products of random matrices has a long and venerable history dating back to first results by Furstenberg and Kesten \cite{FK60}.  For stationary and ergodic sequences $(M_1, M_2, \ldots)$ of non-negative matrices satisfying $\mean\!\big[\log^+ \|M_1\|\big] < \infty$, where $\log^+ x \ldef \max\{\log x, 0\}$, Kingman \cite{Ki73}, see also \cite{Os68}, proved that, by means of the subadditive ergodic theorem,
\begin{align}
  \vp
  \;\ldef\;
  \lim_{n \to \infty} \frac{1}{n} \log \|M_1 \cdot \ldots \cdot M_n\|
  \;\in\;
  [-\infty, \infty)
\end{align}
exists $\prob$-a.s.\ and also satisfies
\begin{align*}
  \vp
  \;=\;
  \lim_{n \to \infty} \frac{1}{n}
  \mean\!\Big[\log \|M_1 \cdot \ldots \cdot M_n\| \Big].
\end{align*}
In particular, $\vp$ is independent of the chosen matrix norm.  The limit, $\vp$, is called \emph{maximal Lyapunov exponent}.  
\begin{remark}[Exact computation of Lyapunov exponents]\label{rem:lyap-exp}\hspace{-1.25ex}
  There are only a few cases where the maximal Lyapunov exponent can be computed explicitly.  For instance, if $(M_1, M_2, \ldots)$ is a stationary and ergodic process of positive $1 \times 1$ matrices, i.e.\ $M_n = \vr(M_n)$, with $\mean[\log^+ \vr(M_1)] < \infty$, then an application of Birkhoff's ergodic theorem yields, $\prob$-a.s.,
  \begin{align}
    \vp
    \;=\;
    \lim_{n \to \infty} \frac{1}{n} \sum_{k=1}^n \log M_k
    \;=\;
    \mean\!\big[\log \vr(M_1) \big].
    \label{eq:1t-fitness}
  \end{align}
  A further simple case is given by a sequence of stationary and ergodic matrices with $\mean[\log^+ \|M_1\|] < \infty$ such that the matrices $M_i$ are either mutually diagonizable, i.e.\ $[M_i, M_j] = 0$ for all $i \ne j$, or of upper (lower) triangular form.  Then, $\prob$-a.s.,
  \begin{align*}
    \vp
    \;=\;
    \lim_{n \to \infty} \frac{1}{n} \log \|M_1 \cdot \ldots \cdot M_n\|
    \;=\;
    \mean\!\big[\log\vr(M_1)\big].
  \end{align*}
  Recall that we denote by $\vr(M)$ the spectral radius of the matrix $M$.  Further cases in which the maximal Lyapunov exponent can be computed explicitly are discussed in \cite{Ke87}.  For the general case, where the computation of $\vp$ is difficult resp.\ infeasible, there are various strategies for giving bounds known in the literature, see also \cite{CPV93}. We will discuss and employ possible methods in Section~\ref{sec:rank-2-case}.
\end{remark}
\begin{remark}[Approximation of Lyapunov exponents]\label{rem:simple:bounds}\hspace{-1.25ex}
  Under certain further assumptions on the stationary and ergodic sequence, $(M_1, M_2, \ldots)$, of non-negative matrices with $\mean[\log^+ \|M_1\|] < \infty$, Key \cite{Ke90} proved that, $\prob$-a.s.\ and in mean,
  \begin{align*}
    \vp
    \;=\;
    \lim_{n \to \infty} \frac{1}{n} \log f(M_1 \cdot \ldots \cdot M_n)
  \end{align*}
  for any one-homogeneous, non-negative, super-multiplicative function, $f$, such that $\mean[\log^- f(M_1)] > -\infty$. By defining
  \begin{align*}
    \underline{\vp}_k
    \;\ldef\;
    \frac{1}{k} \mean\!\big[\log f(M_1 \cdot \ldots \cdot M_k)\big]
    \qquad \text{and} \qquad
    \overline{\vp}_k
    \;\ldef\;
    \frac{1}{k} \mean\!\big[\log \|M_1 \cdot \ldots \cdot M_k\|\big],
  \end{align*}
  then it follows from the sub-multiplicativity of $\| \cdot \|$, the super-multiplicativity of $f$, and the stationarity of the sequence $(M_1, M_2, \ldots)$ that $\underline{\vp}_k$ increases monotonically to $\vp$, whereas $\overline{\vp}_k$ decreases monotonically to $\vp$.  Although this provides an easy way to derive upper and lower bounds on the maximal Lyapunov exponent, the computational effort increases exponentially in $k$.  For i.i.d. sequences $(M_1, M_2, \ldots)$ Pollicot \cite{Po10} and Jurga and Masion \cite{JM19} established efficient approximation schemes with super-exponential convergence rates, see also \cite{PJ13} for further bounds.
\end{remark}
Notice that for a $p$-type BGWPRE, $Z$, as defined above, the sequence of mean matrices, $(M_1, M_2, \ldots)$, form a stationary and ergodic process.  Thus, provided that $\mean\!\big[\log^+ \|M(E_1)\|\big] < \infty$, the corresponding maximal Lyapunov exponent, $\vp_Z$, exists describing the asymptotic rate of growth/decay of the expected value of $Z$.

The almost sure behaviour of the process $Z$ has also been studied intensively.  For instance, if $Z$ is a $p$-type BGWPRE such that $M_n \in (0, \infty)^{p \times p}$ for all $n \in \bbN$ and $\mean\!\big[\log^+ \|M(E_1)\| \big] < \infty$, then it follows from \cite[Theorem~9.10]{Ta81} that, for almost all realizations of the environment, $\vp_Z < 0$ implies that $Z$ becomes extinct almost surely, whereas for $\vp_Z > 0$ there exists a positive probability that $Z$ never becomes extinct.  Moreover, conditioned on survival, we have that
\begin{align*}
  \lim_{n \to \infty} \frac{1}{n} \log \|Z_n\|_1 \;=\; \vp_Z
\end{align*}
almost surely.  In particular, the almost sure growth of the stochastic switching model, cf.\ Example~\ref{ex:switching_strategies}-b), conditioned on non-extinction, is given by $\vp_Z$.

Actually, Tanny established in \cite[Theorem~9.6 and Theorem~9.10]{Ta81} a classification theorem for more general multi-type BGWPRE with non-negative mean matrices satisfying certain regularity conditions.  Notice that these conditions are not satisfied by our responsive and prescient switching model, cf.\ Example~\ref{ex:switching_strategies}-a).  However, due to the particular structure that allows a reduction of this BGWPDRE to a $1$-type BGWPRE, cf.\ \cite[Proposition~7]{DMB11}, an analogous classification theorem can then be deduced from \cite[Theorem~9.6]{Ta81}, see also \cite{AK71a, AK71b}.
\begin{remark}[Lyapunov exponent, fitness and survival-probability]\label{rem:measures-of-fitness}
  The previously mentioned features of the maximal Lyapunov exponent, describing various growth properties of population models, justifies the use of $\vp$ as a measure of \emph{fitness} of population models, as is common in the literature. However, there is no direct monotone relationship between $\vp$ and the \emph{survival probability} of the underlying population in the super-critical case, as the following example confirms: Taking the setting from Section~\ref{sec:mot-exp}, choosing $X$ with parameter $p=4/7$, one can compute that $\si_X=2-\frac{1}{p}=0.25$ and $\vp_X=2p\approx1.143$. Then, for $Z$ letting $p=4/5$, $\ve=0$, $b=2/5$, $w=1/2$ and $d=1/25$ it holds that
  \begin{align*}
    \si_Z
    \;=\;
    2 - \frac{1}{bp} + \frac{1-b}{b} \cdot \frac{w}{w+d}
    \;\approx\;
    0.264
    \;>\;
    \si_X.
  \end{align*}
  However (cf.\ \eqref{eq:rho-lower-bound} below), $\vp_Z \approx 1.050 < \vp_X$. Hence, the comparison of Lyapunov exponents of distinct population models does not necessarily give a complete picture of the advantages of one model over the other.

  This phenomenon has been studied in more detail by Jost and Wang \cite{JW14}, where the authors illustrate that different optimization criteria (i.e.\ largest growth rate vs.\ smallest extinction probability) can lead to distinct optimal strategies.
\end{remark}
We now move on to some of the main results of this paper.  Indeed, the Lyapunov exponent for a BGWPDRE with responsive and prescient switching strategy can be computed explicitly:
\begin{theorem}[Lyapunov exponent of the responsive switcher]
  \label{thm:responsive-fitness}
  Let $(Z_n)$ be a BGWPDRE with environment process $(I_n)$ from Definition~\ref{def:environment_process}, following the \emph{responsive} switching regime in Example~\ref{ex:switching_strategies}, a). Then, $\prob$-a.s.,
  \begin{align*}
    \vp_Z
    \;=\;
    \frac{
      s_2 \log m^1 + s_1\log(1-d^2)
      + s_1 s_2\log\Big(\frac{m^2(1-d^1)}{m^1(1-d^2)}\Big)}
    {s_1+s_2}.
  \end{align*}
\end{theorem}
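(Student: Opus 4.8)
The plan is to exploit the fact that, under the responsive regime, both mean matrices $M^{\mathrm{res}}(1)$ and $M^{\mathrm{res}}(2)$ have rank one, so that the random product $M_1 \cdots M_n$ collapses — up to harmless boundary vectors — to a scalar product, to which Birkhoff's ergodic theorem applies directly. This is the ``rank-$1$ approach'' alluded to in the introduction; alternatively one could invoke the reduction of this BGWPDRE to a $1$-type BGWPRE as in \cite[Proposition~7]{DMB11} together with \eqref{eq:1t-fitness}, but the direct computation below is self-contained.

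First I would record the rank-one factorisation. With $u_1 \ldef (m^1,\, 1-d^1)^{\top}$, $u_2 \ldef (m^2,\, 1-d^2)^{\top}$, $v_1 \ldef (1,0)^{\top}$ and $v_2 \ldef (0,1)^{\top}$ one checks at once that $M^{\mathrm{res}}(e) = u_e\, v_e^{\top}$ for $e \in \{1,2\}$. Hence, writing $M_k = M^{\mathrm{res}}(I_k)$, associativity of matrix multiplication gives, by an immediate induction,
\[
  M_1 \cdots M_n
  \;=\;
  \Big( \prod_{k=1}^{n-1} c(I_k, I_{k+1}) \Big)\; u_{I_1}\, v_{I_n}^{\top},
  \qquad c(i,j) \ldef v_i^{\top} u_j ,
\]
with the empty product understood as $1$. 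Explicitly $c(1,1) = m^1$, $c(1,2) = m^2$, $c(2,1) = 1-d^1$, $c(2,2) = 1-d^2$, all strictly positive because $m^e > 0$ and $d^e < 1$.

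Next I would pass to logarithms. Since $u_{I_1} v_{I_n}^{\top}$ takes only finitely many nonzero values, $\|u_{I_1} v_{I_n}^{\top}\|$ is bounded above and below by positive constants, so $\tfrac1n \log \|u_{I_1} v_{I_n}^{\top}\| \to 0$; and $\mean[\log^+ \|M(I_1)\|] < \infty$ trivially. Taking norms in the display above thus gives, $\prob$-a.s.,
\[
  \vp_Z
  \;=\;
  \lim_{n\to\infty} \frac1n \log \|M_1 \cdots M_n\|
  \;=\;
  \lim_{n\to\infty} \frac1n \sum_{k=1}^{n-1} \log c(I_k, I_{k+1}) .
\]
Applying Birkhoff's ergodic theorem on the path space of the stationary ergodic chain $(I_n)$ to the integrable functional $\log c(I_1, I_2)$ (and using $\tfrac{n-1}{n} \to 1$), the right-hand side converges to $\mean[\log c(I_1, I_2)] = \sum_{i,j} \pi_I(i)\, P_I(i,j)\, \log c(i,j)$.

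Finally I would substitute $\pi_I = \big(\tfrac{s_2}{s_1+s_2},\, \tfrac{s_1}{s_1+s_2}\big)$ and the entries of $P_I$, multiply through by $s_1+s_2$, and collect the coefficients of $\log m^1$, of $\log(1-d^2)$ and of $s_1 s_2$; this rearrangement reproduces exactly the claimed formula. I do not expect a genuine obstacle: the only two points that need a careful word are the vanishing on the exponential scale of the boundary factor $u_{I_1} v_{I_n}^{\top}$ and the (routine) fact that the consecutive-pair functional $\log c(I_k, I_{k+1})$ is covered by the ergodic theorem for $(I_n)$ — everything else is bookkeeping.
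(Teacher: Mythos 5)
Your proposal is correct and follows essentially the same route as the paper: the paper proves a general rank-one lemma (Lemma~\ref{lem:lyap-exp-det0}, with exactly your telescoping factorisation $M_n=\ell_n r_n^{\top}$ and Birkhoff argument, plus some care about integrability and the $-\infty$ case that is vacuous here), specialises it to the two-state environment in Corollary~\ref{cor:lyap-exp-det0}, and then reads off Theorem~\ref{thm:responsive-fitness}; your $c(i,j)=v_i^{\top}u_j$ is precisely the paper's $\langle r(i),\ell(j)\rangle$. The only difference is that you carry out the computation directly for the responsive matrices instead of quoting the general lemma, which is fine since positivity of all four $c(i,j)$ makes the integrability issues trivial.
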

\begin{theorem}[Lyapunov exponent of the prescient switcher]
  \label{thm:prescient-fitness}
  Let $(Z_n)$ be a BGWPDRE with environment process $(I_n)$ from Definition~\ref{def:environment_process}, following the \emph{prescient} switching regime in Example~\ref{ex:switching_strategies}, b). Then, $\prob$-a.s.,
  \begin{align*}
    \vp_Z
    \;=\;
    \frac{
      s_2 \log (1-d^1) + s_1\log m^2
      + s_1 s_2\log\Big(\frac{m^1(1-d^2)}{m^2(1-d^1)}\Big)}
    {s_1+s_2}.
  \end{align*}
\end{theorem}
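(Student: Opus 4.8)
The plan is to exploit that the two mean matrices $M^{\mathrm{pre}}(1)$ and $M^{\mathrm{pre}}(2)$ of the prescient regime both have rank one, so that any product of them collapses to a scalar multiple of a fixed rank-one matrix (these matrices being reducible, Perron--Frobenius does not apply directly). This reduces $\vp_Z$ to the time-average of an explicit function of the environment, to be evaluated by Birkhoff's ergodic theorem. First I would write $M^{\mathrm{pre}}(e) = u^e (w^e)^{\top}$ with the column vectors $u^1 = (m^1, 1-d^1)^{\top}$, $u^2 = (m^2, 1-d^2)^{\top}$ and $w^1 = (0,1)^{\top}$, $w^2 = (1,0)^{\top}$. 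Telescoping the scalar inner products $(w^{e_k})^{\top} u^{e_{k+1}}$, any realization $(e_1, e_2, \ldots)$ of $(I_n)$ then satisfies
\begin{align*}
  M^{\mathrm{pre}}(e_1) \cdots M^{\mathrm{pre}}(e_n)
  \;=\;
  \Big( \prod_{k=1}^{n-1} c(e_k, e_{k+1}) \Big)\, u^{e_1} (w^{e_n})^{\top},
  \qquad
  c(e, e') \;\ldef\; (w^e)^{\top} u^{e'},
\end{align*}
so that $c(1,1) = 1 - d^1$, $c(1,2) = 1 - d^2$, $c(2,1) = m^1$, $c(2,2) = m^2$, all strictly positive under the standing assumptions $m^e > 0$, $d^e < 1$.

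Since this scalar prefactor is positive and $\| u^{e_1} (w^{e_n})^{\top} \|$ takes only finitely many strictly positive values, the quantities $\tfrac{1}{n} \log \| M^{\mathrm{pre}}(e_1) \cdots M^{\mathrm{pre}}(e_n) \|$ and $\tfrac{1}{n} \sum_{k=1}^{n-1} \log c(e_k, e_{k+1})$ differ by $O(1/n)$. Kingman's theorem applies (trivially $\mean\!\big[ \log^+ \| M^{\mathrm{pre}}(I_1) \| \big] < \infty$, the environment being finite-valued), so $\vp_Z$ exists $\prob$-a.s.\ and equals the common limit. Then I would invoke Birkhoff's ergodic theorem for the pair process $W_k \ldef (I_k, I_{k+1})$, which is a factor of the stationary ergodic chain $(I_n)$, hence stationary and ergodic, and which, as a finite-state Markov chain, has the explicit stationary law $\mu(e, e') = \pi_I(e)\, P_I(e, e')$. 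This gives, $\prob$-a.s.,
\begin{align*}
  \vp_Z
  \;=\;
  \mean\!\big[ \log c(I_1, I_2) \big]
  \;=\;
  \sum_{e, e' \in \{1,2\}} \pi_I(e)\, P_I(e, e')\, \log c(e, e'),
\end{align*}
and substituting $\pi_I = (s_2, s_1)/(s_1 + s_2)$ together with the entries of $P_I$ and regrouping the four terms yields exactly the asserted formula.

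An alternative, essentially computation-free route worth recording: the relabelling $\tilde I_n \ldef 3 - I_n$ turns $(I_n)$ into a binary random environment in the sense of Definition~\ref{def:environment_process} with switching probabilities $(s_2, s_1)$, and under it the sequence $(M^{\mathrm{pre}}(I_n))_n$ coincides pathwise with the mean-matrix sequence of a \emph{responsive} switcher as in Example~\ref{ex:switching_strategies}\,a) whose parameters are $(m^1, d^1)$ and $(m^2, d^2)$ interchanged. As $\vp_Z$ depends only on the law of the mean-matrix sequence, Theorem~\ref{thm:responsive-fitness} applies verbatim to the relabelled process and, after undoing the substitution, delivers the claimed expression.

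I do not expect a genuine obstacle here; this is a clean instance of the ``rank-1 reduction'' (cf.\ \cite[Proposition~7]{DMB11}). The only mildly delicate points are (i) bounding the boundary factor $u^{e_1} (w^{e_n})^{\top}$ above and below uniformly in $n$, so that it drops out of the limit, and (ii) handling the degenerate cases $s_1 = 1$ or $s_2 = 1$, where some pairs $(e, e')$ carry weight $\mu(e, e') = 0$; since every $c(e, e')$ nevertheless stays strictly positive and finite, no $0 \cdot (-\infty)$ ambiguity arises and the formula degenerates gracefully.
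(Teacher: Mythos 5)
Your proof is correct and follows essentially the same route as the paper: the paper proves a general rank-one lemma (Lemma~\ref{lem:lyap-exp-det0}, telescoping $M_n=\ell_n r_n^\top$ and applying Birkhoff to the pair process) and then specializes via Corollary~\ref{cor:lyap-exp-det0}, which is exactly your computation of $\mean[\log c(I_1,I_2)]$ expressed through traces. Your alternative relabelling argument ($\tilde I_n = 3-I_n$ reducing the prescient case to Theorem~\ref{thm:responsive-fitness}) is a valid and tidy shortcut, consistent with the fact that the paper proves Theorems~\ref{thm:responsive-fitness}--\ref{thm:stochastic-fitness} simultaneously from the same corollary.
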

A proof will be provided in the next section. For the stochastic switcher we obtain the following analytic result under the additional assumption that the determinant of the mean matrices vanishes:
\begin{theorem}[Lyapunov exponent of the stochastic switcher]
  \label{thm:stochastic-fitness}
  Let $(Z_n)$ be a BGWPDRE with environment process $(I_n)$ from Definition~\ref{def:environment_process}, following the \emph{stochastic} switching regime in Example~\ref{ex:switching_strategies}, b) with 
  $\det M(1) = \det M(2) = 0.$
   Then, $\prob$-a.s.,
  \begin{align}
    \vp_Z
    \;=\;
    \frac{
      s_2 \log\big(\mAct + w^1 \frac{\mDor}{\mAct}\big)
      + s_1 \log\big(\al \mAct + w^2 \frac{\mDor}{\mAct}\big)
    }{s_1+s_2}.
  \end{align}
\end{theorem}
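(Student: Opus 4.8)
The plan is to exploit an algebraic degeneracy that the two hypotheses jointly create. Stochastic switching already forces the first rows of $M(1)$ and $M(2)$ to point in the common direction $(\mAct,\mDor)$, and the extra assumption $\det M(1)=\det M(2)=0$ forces each $M(e)$ to have rank $\le 1$, hence to have its \emph{entire} row space equal to $\mathrm{span}\{(\mAct,\mDor)\}$. This collapses the random matrix product to a product of scalars, and the maximal Lyapunov exponent can then be read off from Birkhoff's ergodic theorem. In the terminology of Remark~\ref{rem:lyap-exp}, this is a ``rank-one'' situation, but of a different flavour than the commuting or triangular cases discussed there.

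Concretely, first I would record the rank-one factorisations
\[
  M(1)\;=\;a_1\,b^{\top},\qquad M(2)\;=\;a_2\,b^{\top},\qquad b\ldef(1,\mDor/\mAct)^{\top},\quad a_1\ldef(\mAct,w^1)^{\top},\quad a_2\ldef(\al\mAct,w^2)^{\top},
\]
which one verifies entrywise: the $(1,1)$- and $(2,1)$-entries fix $a_e$, the $(1,2)$-entry then matches automatically (this is where the common direction of the first rows is used), and the $(2,2)$-entry matches precisely because $\det M(e)=0$. Here one uses $\mAct>0$ from the stochastic-switching set-up; one also notes in passing that $\det M(e)=0$ together with $d^e<1$ forces $w^e>0$, so the $a_e$ (and hence the matrices $a_eb^{\top}$) are nonzero. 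Setting $\lambda_e\ldef b^{\top}a_e$, i.e.\ $\lambda_1=\mAct+w^1\mDor/\mAct$ and $\lambda_2=\al\mAct+w^2\mDor/\mAct$, associativity of matrix multiplication makes the product telescope,
\[
  M_1\cdots M_n\;=\;a_{I_1}\,(b^{\top}a_{I_2})(b^{\top}a_{I_3})\cdots(b^{\top}a_{I_n})\,b^{\top}\;=\;\Big(\prod_{k=2}^{n}\lambda_{I_k}\Big)\,a_{I_1}b^{\top}.
\]

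Taking any matrix norm and dividing by $n$ gives $\tfrac1n\log\|M_1\cdots M_n\|=\tfrac1n\sum_{k=2}^{n}\log\lambda_{I_k}+\tfrac1n\log\|a_{I_1}b^{\top}\|$, and the last term tends to $0$ since $a_{I_1}b^{\top}$ takes only the two fixed nonzero values $a_1b^{\top}$ and $a_2b^{\top}$. Since $(\log\lambda_{I_k})_{k\ge1}$ is a bounded, stationary and ergodic sequence (a deterministic function of the ergodic chain $(I_n)$), Birkhoff's ergodic theorem yields, $\prob$-a.s.,
\[
  \vp_Z\;=\;\lim_{n\to\infty}\tfrac1n\log\|M_1\cdots M_n\|\;=\;\mean\big[\log\lambda_{I_1}\big]\;=\;\pi_I(\{1\})\,\log\lambda_1+\pi_I(\{2\})\,\log\lambda_2,
\]
and substituting $\pi_I=\big(s_2/(s_1+s_2),\,s_1/(s_1+s_2)\big)$ gives the claimed formula. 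The integrability condition $\mean[\log^+\|M(I_1)\|]<\infty$ needed to identify this limit with the Lyapunov exponent of Section~\ref{sec:fitness_BGWDRE} is trivially met, as there are only two bounded matrices; in fact we have computed the limit directly.

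I do not expect a serious analytic obstacle: the real content is spotting, and then cleanly exploiting, the common-row-space phenomenon, which requires \emph{both} the stochastic-switching structure and $\det M(e)=0$. The one point I would handle separately is the boundary case $\al=0$: then $\det M(2)=0$ holds automatically and the second row of $M(2)$ need not lie in $\mathrm{span}\{(\mAct,\mDor)\}$, so the factorisation above can break down. This I would treat either by continuity (all relevant quantities, in particular $\lambda_2$, depend continuously on $\al\downarrow 0$ provided $w^2>0$) or by a direct and simpler computation for that single matrix — I expect this to be the only place requiring extra care.
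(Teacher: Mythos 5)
Your proof is correct and follows essentially the same route as the paper: the paper establishes a general rank-one lemma (Lemma~\ref{lem:lyap-exp-det0}) via exactly the telescoping identity you use, specializes it to the binary environment in Corollary~\ref{cor:lyap-exp-det0}, and the trace cross-term $\log\big(\tr(M(1)M(2))/(\tr M(1)\tr M(2))\big)$ there vanishes precisely because of the common-row-space phenomenon you exploit directly. Your explicit flagging of the boundary case $\al=0$ --- where $\det M(2)=0$ holds automatically and the factorization with common right factor $b$ can fail --- is a legitimate subtlety that the paper's one-line deduction from Corollary~\ref{cor:lyap-exp-det0} passes over in silence.
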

These results are closely related to results in \cite{DMB11}, considering that mean matrices of determinant zero correspond to the \emph{non-hereditary-with-sensing} case therein (cf.\ Section~\ref{sec:rank-one-case} for the proof and further remarks). In the \emph{hereditary} case, i.e.\ the case of non-zero determinants, neither \cite{DMB11} nor the article at hand obtain an explicit result for $\varphi_Z$. However, various bounds will be discussed in Section~\ref{sec:rank-2-case}. We provide one of them here in a special case, for illustration:

\begin{theorem}\label{thm:dirac-lower-bound}
  Let $(Z_n)$ be a BGWPDRE with environment process $(I_n)$ from Definition~\ref{def:environment_process}, following the \emph{stochastic} switching regime in Example~\ref{ex:switching_strategies} with $w^1 = w^2$, $d^1 = d^2$ and $\al \in (0,1)$.  Then, $\prob$-a.s.,
  \begin{align*}
    \vp_Z
    \;\geq\;
    \mean\!\big[ \log\big(\tr M(I_0)-\max\{\det M(1)/\mAct,0\}\big) \big].
  \end{align*}
\end{theorem}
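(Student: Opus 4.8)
The plan is to bound $\vp_Z$ from below by exhibiting a single strictly positive vector that is \emph{subinvariant} under both environmental mean matrices, with explicitly computable multipliers, and then to let Birkhoff's ergodic theorem take over; this is in the spirit of the Lyapunov-exponent estimates of Malik and Smith \cite{MS08}. Write $w \ldef w^1 = w^2$, $d \ldef d^1 = d^2$, $x_1 \ldef 1$ and $x_2 \ldef \al$. By the form of the stochastic switching matrices in Example~\ref{ex:switching_strategies}, one has $M(e) = \diag(x_e,1)\,M(1)$ with
\begin{align*}
  M(e) \;=\; \begin{pmatrix} \mAct\, x_e & \mDor\, x_e \\ w & 1-w-d \end{pmatrix}, \qquad e \in \{1,2\},
\end{align*}
where $\mAct > 0$ and, by the standing irreducibility assumption on mean matrices, $\mDor, w > 0$. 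First I would record the elementary identity $\tr M(e) - \det M(1)/\mAct = \mAct x_e + \mDor w/\mAct$ (use $\det M(1)/\mAct = (1-w-d) - \mDor w/\mAct$ and $\tr M(e) = \mAct x_e + (1-w-d)$). Hence, setting $\lambda(e) \ldef \tr M(e) - \max\{\det M(1)/\mAct,\,0\}$, we get $\lambda(e) = \min\{\tr M(e),\, \mAct x_e + \mDor w/\mAct\} > 0$; so proving $\vp_Z \ge \mean[\log\lambda(I_0)]$ is exactly the assertion.

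The crux is that $M(1)$ and $M(2)$ do \emph{not} share a common subinvariant direction — applying a fixed-vector estimate directly to the product $M_1\cdots M_n$ only yields the weaker harsh-environment multiplier $\al\mDor w/\mAct$. The way past this is the purely algebraic regrouping, valid for $n \ge 2$,
\begin{align*}
  M_1 M_2 \cdots M_n \;=\; \diag(x_{I_1},1)\; \hat M_2 \hat M_3 \cdots \hat M_n\; M(1), \qquad \hat M_k \ldef M(1)\,\diag(x_{I_k},1) = \begin{pmatrix} \mAct\, x_{I_k} & \mDor \\ w\, x_{I_k} & 1-w-d \end{pmatrix},
\end{align*}
obtained by sliding each diagonal factor past the $M(1)$ to its immediate left. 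After this rewriting, $\hat M(1)$ and $\hat M(2)$ \emph{do} share the first-column direction $g \ldef (\mAct, w)^{\top} > 0$, and the key step is the componentwise inequality $\hat M(e)\, g \ge \lambda(e)\, g$ for $e \in \{1,2\}$. This is a two-line computation: dividing the two coordinates of $\hat M(e)g$ by $\mAct$ and by $w$ respectively reduces it to $\lambda(e) \le \mAct x_e + \mDor w/\mAct$ and $\lambda(e) \le \mAct x_e + (1-w-d) = \tr M(e)$, both of which hold by the identity above. Discovering this regrouping together with the vector $g$ — and hence verifying the subinvariance — is the only real obstacle; everything else is bookkeeping.

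To finish: all the matrices $\hat M_k$, $M(1) = \hat M(1)$ and $\diag(x_{I_1},1)$ are nonnegative, so iterating $\hat M(e) g \ge \lambda(e) g$ from the right gives $\hat M_2 \cdots \hat M_n\, g \ge \big(\prod_{k=2}^n \lambda(I_k)\big)\, g$; combining this with $M(1)g \ge \lambda(1)g$ (the case $e = 1$) and $\diag(x_{I_1},1)\, g \ge \al\, g$ yields
\begin{align*}
  M_1 \cdots M_n\, g \;\ge\; \al\, \lambda(1)\, \Big(\prod_{k=2}^n \lambda(I_k)\Big)\, g \qquad \text{componentwise.}
\end{align*}
Taking $\ell^1$-norms — monotone on nonnegative vectors — and using $\|M_1 \cdots M_n\, g\|_1 \le \|M_1 \cdots M_n\|\,\|g\|_1$ for the associated operator norm gives $\|M_1 \cdots M_n\| \ge \al\, \lambda(1)\prod_{k=2}^n \lambda(I_k)$. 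Since $\vp_Z$ is independent of the chosen norm and, by the subadditive ergodic theorem as recalled above, $\vp_Z = \lim_n \tfrac1n \log \|M_1 \cdots M_n\|$ holds $\prob$-a.s.\ (the environment takes finitely many values, so $\mean[\log^+\|M(I_1)\|] < \infty$), dividing by $n$, letting $n \to \infty$, and applying Birkhoff's ergodic theorem to the stationary ergodic sequence $(\log\lambda(I_k))_{k \ge 1}$ (integrable, being bounded with $\lambda(I_k) > 0$) yields $\vp_Z \ge \mean[\log\lambda(I_0)]$, which is the claim. Finally, if $\det M(1) = 0$ then also $\det M(2) = \al\det M(1) = 0$, and the statement — in fact with equality — is already contained in Theorem~\ref{thm:stochastic-fitness}; so one may freely assume $\det M(1) \ne 0$, although nothing above requires it.
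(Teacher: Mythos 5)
Your proof is correct, and it takes a genuinely different route from the paper's. The paper derives this bound from its general rank-2 machinery: it decomposes each $M_n$ as a sum of two rank-1 matrices using the decomposition \eqref{eq:vec-decom-pos-det} or \eqref{eq:vec-decom-neg-det} according to the sign of $\det M(1)$, passes to the $2\times2$ transfer matrices $A_{n,n+1}$ via Lemma~\ref{lem:lyap-exp-rank2}, checks in both cases that $A(i,j)^{1,1}=\tr M(j)-(\det M(1)/\mAct)^+$, and then invokes the entropy lower bound of Proposition~\ref{prop:bounds}(b) with the Dirac measure $\nu_n=\delta_{\{1\}^n}$, which isolates exactly the product of these $(1,1)$-entries before the ergodic theorem finishes. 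You instead exploit the factorization $M(e)=\diag(x_e,1)M(1)$, regroup the product, and exhibit a single positive vector $g=(\mAct,w)^\top$ that is subinvariant for both regrouped matrices with multiplier $\lambda(e)=\tr M(e)-\max\{\det M(1)/\mAct,0\}$; the two coordinates of the subinvariance inequality produce precisely the two candidates $\tr M(e)$ and $\mAct x_e+\mDor w/\mAct$ whose minimum is $\lambda(e)$, so the case split on the sign of the determinant disappears. The two arguments are secretly tracking growth along the same direction (your $g$ is the common left factor $\ell^1(1)$ in the paper's positive-determinant decomposition), but yours is self-contained — no transfer-matrix lemma, no entropy bound — and makes the mechanism more transparent, at the cost of being a bespoke trick rather than an instance of a reusable framework that also yields the paper's other rank-2 bounds.
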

Note that since $w$ and $d$ do not depend on $e$, we get $\det M(2) = \al \det M(1)$. Notably, when $\det M(1)=0$, this lower bound equates to the result from Theorem~\ref{thm:stochastic-fitness}. 

Further bounds will be provided in the next section, where we also try to shed light on the structures of switching mechanisms that allow for the computation of analytical results and bounds. Indeed, we will distinguish the so-called `rank-1'-case (which is closely related to the results in \cite{DMB11}), allowing explicit computations, and the `rank-2'-case, where often only bounds can be provided. Here, we refer to the rank of mean matrices of the reproduction resp.\ switching mechanisms. Obviously, the mean matrices of the responsive and prescient switcher in Example~\ref{ex:switching_strategies} are degenerate and of rank 1, as are the mean matrices of the stochastic switcher in Theorem~\ref{thm:stochastic-fitness}, due to the vanishing determinant, while the stochastic switcher of Theorem~\ref{thm:dirac-lower-bound} has mean matrices of rank 2. Yet, this switching mechanism also has particular properties that will be exploited in the next section.

Before we carry out these considerations and prove the above results, we first investigate the selective advantages of the switching strategies in different environments.

\subsection{Fair comparison of BGWPDREs with different switching strategies}
To decide which switching strategy of two different BGWPDREs is superior in an environment given by $(I_n)$, one needs to impose a condition that ensures that both processes ``may use an equal amount of available resources''.  One way to do this would be to require that both processes can produce in expectation the same amount of offspring in each generation, be it active or dormant offspring, and to assume that the death probabilities of both processes are the same in both the active and dormant states each.  The processes thus can adapt to the environment only by means of their specific switching strategies while using the same amount of resources.  This motivates our notion of fitness advantages under ``fair comparison''. We formulate this concept in the general framework of $p$-type branching processes in random environemnts $(E_n)$.
\begin{definition}[Fitness advantage under fair comparison]
  \label{def:fair-comparison}
  For $p \geq 1$ let $Z \equiv (Z_n)$ and $\bar{Z} \equiv (\bar Z_n)$ two $p$-type BGWPRE with respect to the same environmental process $(E_n)$ such that $\prob$-a.s.\ for all $1 \leq t \leq p$ and $n \geq 1$ it holds for their mean matrices that
  \begin{align}\label{eq:fair-comp-condition}
    \sum_{i=1}^p m_n^{t,i}
    \;=\;
    \sum_{i=1}^p \overline{m}_n^{t,i}.
  \end{align}
  Then, if $\vp_Z > \vp_{\bar Z}$, we say that $Z$ is \emph{fitter} than $\bar{Z}$ at \emph{fair comparison}.  If additionally $\vp_Z > 0 \geq \vp_{\bar Z}$, we say that  $Z$ has a \emph{strong} (or qualitative) fitness advantage over $\bar Z$ under fair comparison.
\end{definition}
\begin{remark}\label{rem:def-fair-comp}
  \begin{enumerate}
  \item The concept of Definition~\ref{def:fair-comparison} is in the same spirit as the comparison of strategies in \cite{DMB11}, since equation~\eqref{eq:fair-comp-condition} assures that for each $t$, type-$t$-individuals in both populations produce in expectation the same amount of offspring, only varying the distribution of types among offspring.
  \item For BGWPDREs in environment $(I_n)$, Equation~\eqref{eq:fair-comp-condition} is equivalent to
    \begin{align*}
      &(i)\quad
        \mAct^e + \mDor^e
        \;=\;
        \overline{m}_{\mathrm{a}}^e + \overline{m}_{\mathrm{d}}^e
      &\text{and}&
      &(ii)\quad d^e \;=\; \bar{d}^e
    \end{align*}
    for each $e \in \{1,2\}$.
  \item To allow a comparison of a BGWPDRE to a 1-type process (i.e.\ without dormancy), let $(X_n)$ be a 1-type BGWPRE with environment $(I_n)$ with conditional offspring means $m_1$ and $m_2$ (referring to healthy and harsh environmental states respectively).  This process can be understood as a 2-type BGWPRE process in the sense of Definition~\ref{def:bgwdre}, starting in $(1,0)$, and having mean matrices
    \begin{align*}
      M(e)
      \;=\;
      \begin{pmatrix}
        m^e & 0\\
        1-d^e & 0
      \end{pmatrix}
    \end{align*}
    for $e \in \{1,2\}$ and some arbitrary $d^e \in (0,1)$.  Although these matrices are reducible, this makes a fair comparison feasible.
  \item Note that the notion of fair comparison alone does not yet imply any kind of reproductive trade-off. However, every Lyapunov exponent or bound of such we will compute in the rest of this paper is continuous in the model parameters. This continuity and the strictness of the inequality in the definition of fitness advantages very well include the possibility of advantages under `disadvantageous' comparison given (sufficiently small but non-trivial) trade-offs.
  \end{enumerate}
\end{remark}
One of the main goals of this article is to identify situations, under fair comparison, in which one switching strategy can be super-critical, whereas the other switching strategy or the process without dormancy is sub-critical. Note that this is impossible in the absence of a random environment, as pointed out in the discussion after Proposition~\ref{prop:motivating-exp}. This is now obtained in the context of fair comparison and making use of Remark~\ref{rem:def-fair-comp}:
\begin{theorem}\label{thm:advantages-of-seedbanks}
  Denote by $(I_n)$ an environment process as in Definition~\ref{def:environment_process}, by $X$ a 1-type branching process as in Remark~\ref{rem:def-fair-comp} and by $Z^{\mathrm{res}}, Z^{\mathrm{pre}}, Z^{\mathrm{sto}}$ BGWPDREs following either the responsive, prescient or stochastic switching strategy as in Example~\ref{ex:switching_strategies}. Then, for either of the four processes there are non-trivial parameter regimes and environmental distributions, under which this process has a strong fitness advantage over the other three in the sense of Defintion~\ref{def:fair-comparison}.
\end{theorem}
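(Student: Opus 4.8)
The plan is to prove the statement by an explicit construction, one regime per process. Recall (Remark~\ref{rem:def-fair-comp}) that once the common ``resource'' data $s_1,s_2$, $m^1,m^2$ (the total mean offspring of an active individual in the healthy resp.\ harsh state) and $d^1,d^2$ (the death probabilities of dormant individuals) are fixed, all four processes are automatically in fair comparison, and a $1$-type process $X$ may be read off from Remark~\ref{rem:def-fair-comp}~(3). So for each of the four processes I will exhibit a choice of these parameters (together with the remaining free parameters of the stochastic switcher) for which the designated Lyapunov exponent is $>0$ while the other three are $\le 0$; since every exponent below is continuous in the parameters, the strict inequalities persist on an open neighbourhood, which yields the claimed non-trivial regimes.

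\textbf{The four exponents.} For the sleepless process $X$, the mean matrices $\left(\begin{smallmatrix} m^e & 0\\ 1-d^e & 0\end{smallmatrix}\right)$ are triangular with spectral radii $m^1,m^2$, so \eqref{eq:1t-fitness} gives $\vp_X=\bigl(s_2\log m^1+s_1\log m^2\bigr)/(s_1+s_2)$. For $Z^{\mathrm{res}}$ and $Z^{\mathrm{pre}}$ I use Theorems~\ref{thm:responsive-fitness} and~\ref{thm:prescient-fitness} verbatim; note that these exponents equal $\mean[\log\vr(M(I_0))]$ plus the correction $\pm\tfrac{s_1s_2}{s_1+s_2}\log\tfrac{m^2(1-d^1)}{m^1(1-d^2)}$, of opposite sign for the two strategies. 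For the stochastic switcher I restrict to the non-hereditary sub-case $\det M(1)=\det M(2)=0$ allowed by Example~\ref{ex:switching_strategies}~b), so Theorem~\ref{thm:stochastic-fitness} applies; fair comparison forces $\mAct+\mDor=m^1$, $\al=m^2/m^1\in[0,1)$ and $w^e=\mAct(1-d^e)/m^1$, whence $\vp_{Z^{\mathrm{sto}}}=\mean[\log\vr(M(I_0))]$ with \emph{no} correction term. It is convenient to specialise to $s_1=s_2=s$ (with $s^2<1$), $d^1=d^2=d$, and to write $\mAct=x\,m^1$; then
\[
  \vp_X=\tfrac12\log(m^1m^2),\qquad
  \vp_{Z^{\mathrm{res}}}=\tfrac12\bigl((1-s)\log m^1+s\log m^2+\log(1-d)\bigr),\qquad
  \vp_{Z^{\mathrm{pre}}}=\tfrac12\bigl(s\log m^1+(1-s)\log m^2+\log(1-d)\bigr),
\]
and $\vp_{Z^{\mathrm{sto}}}=\tfrac12\log(v_1v_2)$ with $v_1=xm^1+(1-x)(1-d)$, $v_2=xm^2+(1-x)(1-d)$.

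\textbf{The four regimes.} \emph{(i) $X$ wins.} Take $m^1>m^2>1$, $s=\tfrac12$, $x$ with $xm^1<1$ and $d\uparrow1$: then $\vp_X=\tfrac12\log(m^1m^2)>0$, while $\vp_{Z^{\mathrm{res}}},\vp_{Z^{\mathrm{pre}}}\to-\infty$ (the $\log(1-d)$ term) and $\vp_{Z^{\mathrm{sto}}}\to\log(xm^1)+\tfrac12\log\al<0$. \emph{(ii) $Z^{\mathrm{res}}$ wins.} Take $s$ small, $m^1$ with $m^1(1-d)>1$, and $m^2$ so small that $m^1m^2<1$ and $m^2<1-d$ (e.g.\ $d=\tfrac12$, $m^1=3$, $m^2=\tfrac14$): then $\vp_{Z^{\mathrm{res}}}\to\tfrac12\log(m^1(1-d))>0$, $\vp_X=\tfrac12\log(m^1m^2)<0$, $\vp_{Z^{\mathrm{pre}}}\to\tfrac12\log(m^2(1-d))<0$, and a direct check gives $v_1v_2<1$ for all $x\in[0,1]$, so $\vp_{Z^{\mathrm{sto}}}<0$. \emph{(iii) $Z^{\mathrm{pre}}$ wins.} Take $s\in(\tfrac12,1)$, $d=\tfrac12$, $m^1=e^A$ with $A>\tfrac{\log 2}{2s-1}$, $m^2=e^{-B}$ with $B\ge A$, and $x$ small: since $s>\tfrac12$ one has $\vp_{Z^{\mathrm{res}}}<0$ and $\vp_{Z^{\mathrm{res}}}\le\vp_{Z^{\mathrm{pre}}}$; the choice of $A$ makes $\vp_{Z^{\mathrm{pre}}}\ge\tfrac12\bigl((2s-1)A-\log2\bigr)>0$; $\vp_X\le0$; and $x$ small forces $v_1,v_2<1$, so $\vp_{Z^{\mathrm{sto}}}<0$. \emph{(iv) $Z^{\mathrm{sto}}$ wins.} Take $s\in(\tfrac12,1)$, $1-d=e^{-C}$ with $C$ large (given $s$), $m^1=e^A$ with $C+\log4<A<\tfrac{C}{2s-1}$ (a non-empty window once $C$ is large enough), $m^2=e^{-B}$ with $B\ge A$, and $x=\tfrac12$: then $\vp_X=\tfrac12(A-B)\le0$, $\vp_{Z^{\mathrm{res}}}=\tfrac12\bigl((1-s)A-sB-C\bigr)<0$, $\vp_{Z^{\mathrm{pre}}}=\tfrac12\bigl(sA-(1-s)B-C\bigr)\le\tfrac12\bigl((2s-1)A-C\bigr)<0$ by the upper bound on $A$, while $v_1v_2\approx\tfrac14e^{A-C}$ yields $\vp_{Z^{\mathrm{sto}}}\approx\tfrac12\bigl(A-C-\log4\bigr)>0$. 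In each case the winner's exponent is positive and the other three are $\le 0$, which is exactly the strong fitness advantage of Definition~\ref{def:fair-comparison}.

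\textbf{Main obstacle.} The routine work consists of the (elementary but somewhat tedious) verification of the displayed inequalities and limits in (i)--(iv). The genuinely delicate case is (iv): the stochastic switcher is ``committed'' to a fixed reproductive mixture and at first sight looks dominated by the two sensing strategies. The point is that $\vp_{Z^{\mathrm{sto}}}$ carries no correction term whereas $\vp_{Z^{\mathrm{res}}}$ carries a \emph{negative} one in a regime with $m^1$ huge and $m^2$ tiny, while that same regime — by forcing $d$ close to $1$ — simultaneously cripples the dormancy ``buffer'' on which a prescient switcher would otherwise thrive in a rapidly but randomly oscillating environment. Driving all three competing exponents below $0$ while keeping $\vp_{Z^{\mathrm{sto}}}>0$ is precisely the balancing act encoded in the two-sided bound $C+\log4<A<\tfrac{C}{2s-1}$, and exhibiting such a window (and checking that the det-$0$ stochastic switcher with $\mAct=m^1/2$ falls inside it) is the crux of the argument.
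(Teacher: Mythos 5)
Your overall strategy coincides with the paper's: Theorem~\ref{thm:advantages-of-seedbanks} is proved there, too, by exhibiting explicit fair-comparison families and, for each of the four processes, a choice of parameters under which its Lyapunov exponent -- computed exactly via Theorems~\ref{thm:responsive-fitness}, \ref{thm:prescient-fitness}, \ref{thm:stochastic-fitness} and \eqref{eq:1t-fitness} -- is positive while the other three are $\le 0$. The difference is one of execution: the paper fixes a single family ($m^1=4$, $m^2=4\al$, $d^e=1/5$, $w^e=2/5$) and evaluates the four exponents numerically at four points $(\al,s_1,s_2)$ (Example~\ref{exps:strong-advantages}), whereas you build four separate parametric regimes whose inequalities are checkable by hand; your version is more self-contained (no numerics) at the cost of using a different fair-comparison family per case. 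Your reductions under $s_1=s_2=s$, $d^1=d^2=d$, $\det M^{\mathrm{sto}}(e)=0$ are correct (in particular the absence of a correction term for the degenerate stochastic switcher, since $\tr(M(1)M(2))=\tr M(1)\tr M(2)$ there), and regimes (i), (ii) and (iv) check out.

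Regime (iii), however, contains an error as written. For $s>\tfrac12$ and $B\ge A$ one has
\begin{align*}
  sA-(1-s)B \;=\; (2s-1)A-(1-s)(B-A)\;\le\;(2s-1)A,
\end{align*}
so your claimed bound $\vp_{Z^{\mathrm{pre}}}\ge\tfrac12\bigl((2s-1)A-\log2\bigr)$ goes in the \emph{wrong} direction under the hypothesis $B\ge A$; it would require $B\le A$. (This is exactly the inequality you use, correctly, in regime (iv) to push $\vp_{Z^{\mathrm{pre}}}$ \emph{below} zero.) Concretely, for $B$ much larger than $A$ the prescient exponent $\tfrac12(sA-(1-s)B-\log 2)$ is arbitrarily negative, so the regime ``$B\ge A$'' does not uniformly deliver $\vp_{Z^{\mathrm{pre}}}>0$. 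The construction is easily repaired: take $B=A$, which gives $\vp_X=0\le 0$, $\vp_{Z^{\mathrm{res}}}=\tfrac12\bigl((1-2s)A-\log2\bigr)<0$ and $\vp_{Z^{\mathrm{pre}}}=\tfrac12\bigl((2s-1)A-\log2\bigr)>0$ exactly (and, by continuity, $B\in[A,A+\ve)$ for small $\ve>0$ also works); with $x$ small the stochastic exponent stays negative as you argue. With that one fix the proof is complete.
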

We prove this Theorem by means of examples of dominant strategies combining the results of Theorem~\ref{thm:responsive-fitness}, Theorem~\ref{thm:prescient-fitness}, Theorem~\ref{thm:stochastic-fitness} and \eqref{eq:1t-fitness} from Remark~\ref{rem:lyap-exp} after fitting the parameters to the regime of fair comparison.
\begin{figure}
  \includegraphics{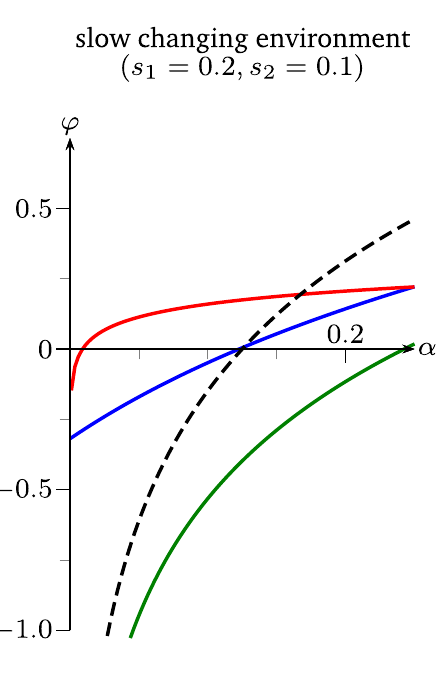}
  \hspace{4mm}
  \includegraphics{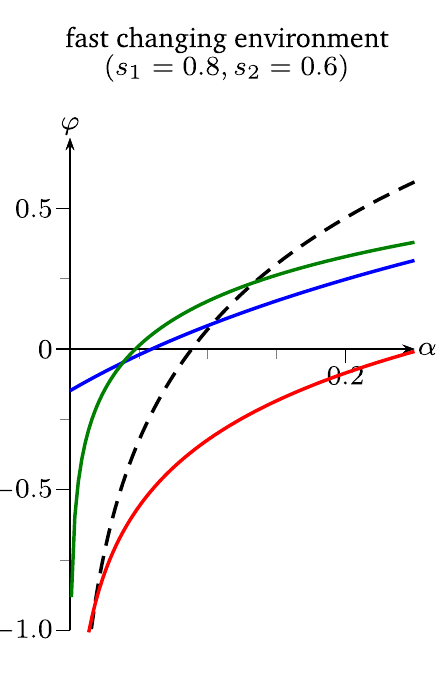}
  \caption{\label{fig:fit-adv_fair-comp} Parameter regimes of Example~\ref{exps:strong-advantages} (1) and (3) respectively, Lyapunov exponents taken as functions of $\al$. black: $\vp_X$, red: $\vp_{\mathrm{res}}$, blue: $\vp_{\mathrm{sto}}$, green: $\vp_{\mathrm{pre}}$.}
\end{figure}
\begin{example}[Strong fitness advantages of seed bank switching strategies]
  \label{exps:strong-advantages}\hspace{-.75ex}
  Let $X$ be a $1$-type BGWPRE as in Remark~\ref{rem:def-fair-comp} above with $m(1) = 4$ and $m(2) = 4\al$, where $\al < 1/4$ such that $X$ is sub-critical in the second environment.  Further, let $Z^{\mathrm{res}}$, $Z^{\mathrm{pre}}$ and $Z^{\mathrm{sto}}$ be three BGWPDREs with mean matrices
  \begin{align*}
    M^{\mathrm{res}}(1)
    &\;=\;
    \begin{pmatrix}
      4   & 0\\
      4/5 & 0
    \end{pmatrix},
    &
    M^{\mathrm{res}}(2)
    &\;=\;
    \begin{pmatrix}
      0 & 4\al\\
      0 & 4/5
    \end{pmatrix},          
    \\[1ex]
    M^{\mathrm{pre}}(1)
    &\;=\;
    \begin{pmatrix}
      0   & 4\\
      0 & 4/5
    \end{pmatrix},
    &
    M^{\mathrm{pre}}(2)
    &\;=\;
    \begin{pmatrix}
      4\al & 0\\
      4/5 & 0
    \end{pmatrix},          
    \\[1ex]
    M^{\mathrm{sto}}(1)
    &\;=\;
    \begin{pmatrix}
      2 & 2\\
      2/5 & 2/5
    \end{pmatrix},
    &
    M^{\mathrm{sto}}(2)
    &\;=\;
    \begin{pmatrix}
      2\al & 2\al\\
      2/5 & 2/5
    \end{pmatrix}.
  \end{align*}
  Noting that both the responsive and prescient switching matrices correspond to $d^1 = d^2 = 1/5$ and the stochastic switching matrices additionally to $w^1 = w^2 = 2/5$.  In particular, these processes satisfy \eqref{eq:fair-comp-condition}, the condition of fair comparison. Also note that $\det M^{\mathrm{sto}}(1) = \det M^{\mathrm{sto}}(2) = 0$, such that we obtain an exact result from Theorem~\ref{thm:stochastic-fitness}.
  
  Now, only the environment-related parameters $\al < 1/4$ and $s_1, s_2$ are left to play with, describing the severity of harsh environments and the lengths of the environmental phases. The following cases prove Theorem~\ref{thm:advantages-of-seedbanks}:
  \begin{enumerate}
  \item For $\al = 1/20$, $s_1 = 2/10$ and $s_2 = 1/10$ we obtain
    \begin{align*}
      &\vp_X \approx -0.61 < 0,&      
      &\vp_{\mathrm{sto}} \approx -0.17 < 0,&
      &\vp_{\mathrm{pre}} \approx -0.95 < 0&                                   
      &\text{but}&
      &\vp_{\mathrm{res}} \approx 0.11 > 0.
    \end{align*}
  \item For $\al = 1/20$, $s_1 = 1/2$ and $s_2 = 1/2$ we get
    \begin{align*}
      &\vp_X \approx -0.11 < 0,&
      &\vp_{\mathrm{res}} \approx -0.17 < 0,&
      &\vp_{\mathrm{pre}} \approx -0.17 < 0&
      &\text{but}&
      &\vp_{\mathrm{sto}} \approx 0.09 > 0.
    \end{align*}
  \item Letting $\al = 1/20$, $s_1 = 8/10$ and $s_2 = 6/10$ implies
    \begin{align*}
      &\vp_X \approx -0.33 < 0,&
      &\vp_{\mathrm{res}} \approx -0.56 < 0,&
      &\vp_{\mathrm{sto}} \approx -0.02 < 0&
      &\text{but}&
      &\vp_{\mathrm{pre}} \approx 0.01 > 0.
    \end{align*}
  \item Finally, choosing $\al = 1/5$, $s_1 = 8/10$ and $s_2 = 3/20$ yields
    \begin{align*}
      &\vp_{\mathrm{res}} \approx -0.17 < 0,&
      &\vp_{\mathrm{sto}} \approx -0.05 < 0,&
      &\vp_{\mathrm{pre}} \approx -0.02 < 0&
      &\text{but}&
      &\vp_X \approx 0.03 > 0.
    \end{align*}
  \end{enumerate}
\end{example}
\begin{figure}
  \includegraphics{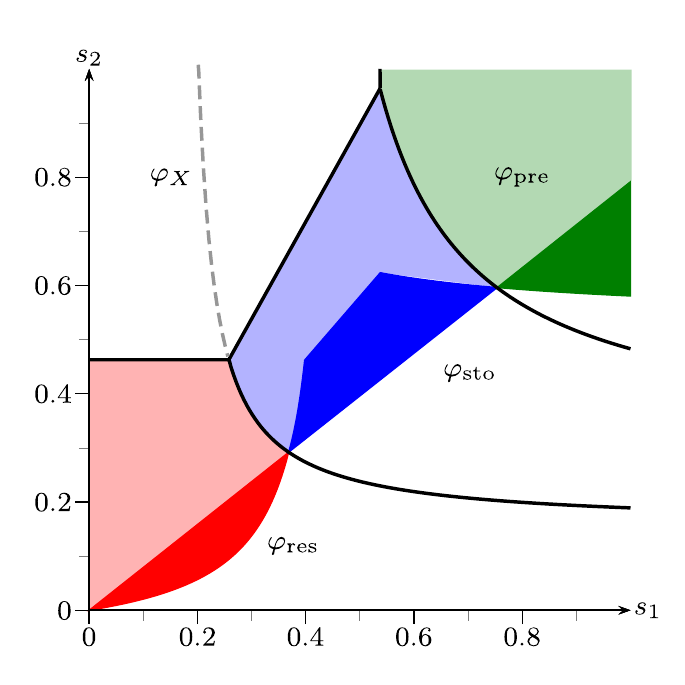}
  \hspace{-0.2cm}
  \includegraphics{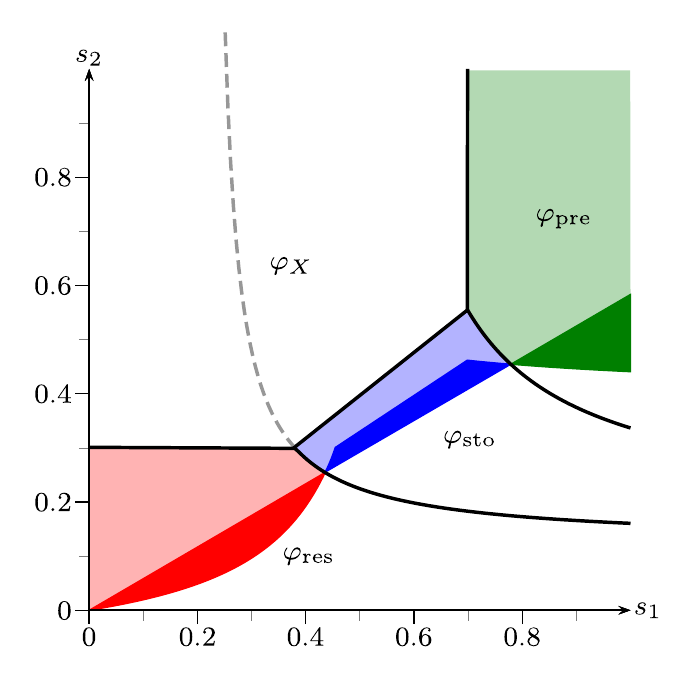}
  \caption{\label{fig:phase_diagram} Phase diagram of the maximal Lyapunov exponents $\vp_X$, $\vp_{\mathrm{res}}$, $\vp_{\mathrm{sto}}$, $\vp_{\mathrm{pre}}$ of Example~\ref{exps:strong-advantages} with $\al = 1/20$ (left) and $\al = 1/10$ (right).  Strong advantage of $\vp_{\mathrm{res}}$ (red), $\vp_{\mathrm{sto}}$ (blue), and $\vp_{\mathrm{pre}}$ (green). Advantegous and supercritical: light red, light blue, light green resp.}
\end{figure}
\begin{remark}[Interpretation of advantageous strategies]
  \label{rem:interpr-advs}\hspace{-1.25ex}
  Figure~\ref{fig:fit-adv_fair-comp} provides more insight into the behaviour of the four strategies than Example~\ref{exps:strong-advantages}, by taking the parameter regimes (1) and (3) thereof and plotting the respective Lyapunov exponents as functions of $\al \in (0, 1/4)$.  Furthermore, Figure~\ref{fig:phase_diagram} shows the fitness advantage landscapes of the four models as a function of $(s_1,s_2)$, where strong advantages are colorized.

  The responsive switcher, when $m_2 \ll 1-d_2$, suffers most upon entering or exiting the harsh environment.  Hence, in a scenario where environments rarely change (cf.\ Figure~\ref{fig:fit-adv_fair-comp} (left) and Figure~\ref{fig:phase_diagram}), responsive switching does well compared to the other strategies.

  The stochastic switcher however, performs a \emph{bet hedging} strategy, i.e.\ investing in dormant offspring even in good times to have better chances in worse times. This can often be very costly, but really pays off when environments change with a moderate frequently (cf.\ Figure~\ref{fig:phase_diagram}), especially when bad environments get very harsh, i.e.\ when $\al$ is small.

  The prescient switcher is an extreme bet hedging strategy since it invests under good environmental conditions all its resources in producing dormant offspring, whereas in bad environments it only produces active offspring (cf.\ Figure~\ref{fig:fit-adv_fair-comp} (right) and Figure~\ref{fig:phase_diagram}). 
Arguably, this can be understood as a \emph{counter-intuitive responsive switching strategy}. Nevertheless, in the extreme case $s_1=s_2=1$, where deterministically the environment changes at every generation, it is intuitive that the prescient switcher is optimal. Figure~\ref{fig:phase_diagram} even shows a non-trivial parameter region, in which this strategy is dominant.
  
  Note that the $1$-type process without dormancy trait will always dominate the switching strategies when $\al$ becomes sufficiently big -- i.e.\ when the process gets less and less sub-critical in bad environments -- as illustrated in Figure~\ref{fig:phase_diagram} (right). In fact, in that particular parameter setting, when $\al \geq 1/5$, only the region of $\vp_X$ will appear in the phase diagram, meaning that for any switching parameters $\vp_X\geq\max\{\vp_{\mathrm{res}}, \vp_{\mathrm{sto}}, \vp_{\mathrm{pre}}\}$.  This corresponds to Proposition~\ref{prop:motivating-exp} from the beginning of this paper, where we saw that seed bank strategies are at a disadvantage in super-critical environments.

  Lastly, note that, for general values of $\al$, the case of iid environments -- which corresponds to the line on which $s_1+s_2=1$ -- would not at all capture the strong advantage of responsive and prescient switching in the settings of Figure~\ref{fig:phase_diagram}. Hence, for providing a complete understanding of the fitness landscapes, the iid case is insufficient.
\end{remark}
\begin{figure}[t]
  \includegraphics{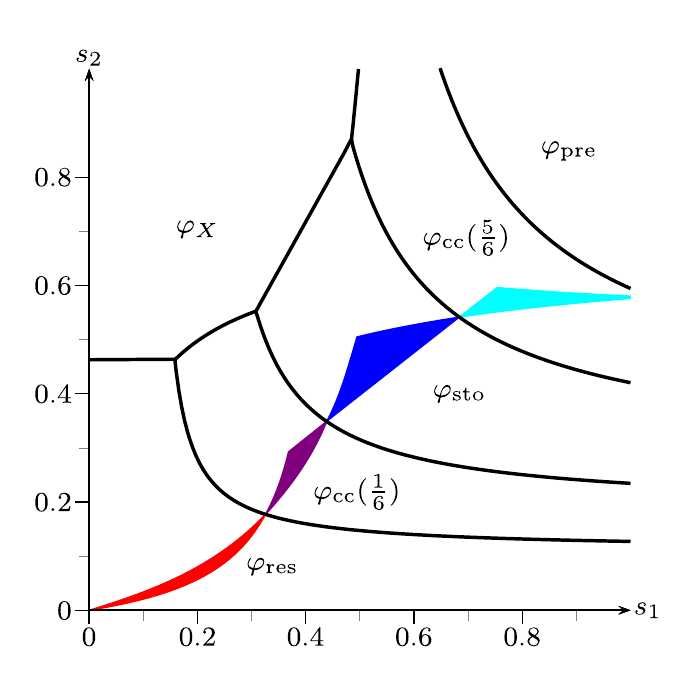}
  \hspace{-0.2cm}
  \includegraphics{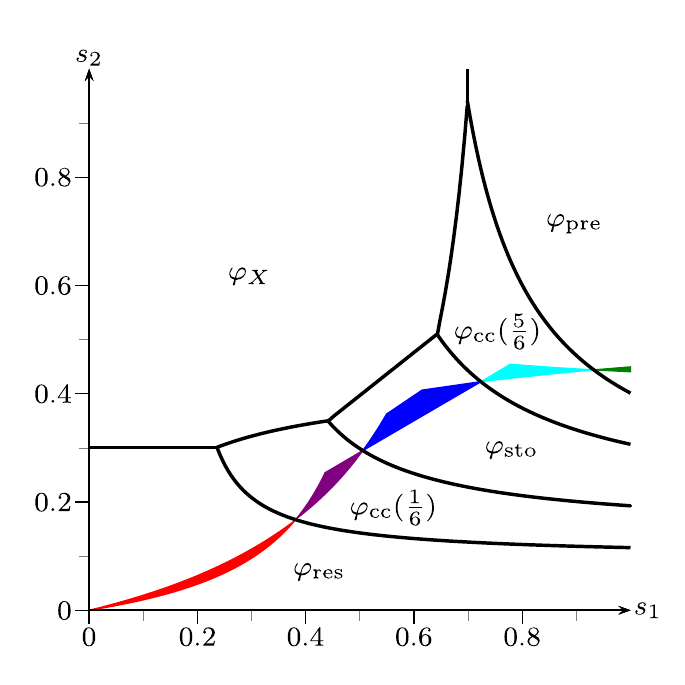}
  \caption{\label{fig:phase_diagram_cc} Phase diagram of the maximal Lyapunov exponents $\vp_X$, $\vp_{\mathrm{res}}$, $\vp_{\mathrm{sto}}$, $\vp_{\mathrm{pre}}$, $\vp_{\mathrm{cc}}(1/6)$ and $\vp_{\mathrm{cc}}(5/6)$ of Example~\ref{exps:strong-advantages} with $\al = 1/20$ (left) and $\al = 1/10$ (right), and Remark~\ref{rem:convex-combination} with $q=1/6$ and $q=5/6$.  Strong advantage of $\vp_{\mathrm{res}}$ (red), $\vp_{\mathrm{sto}}$ (blue), $\vp_{\mathrm{pre}}$ (green), $\vp_{\mathrm{cc}}(1/6)$ (purple), and $\vp_{\mathrm{cc}}(5/6)$ (cyan).}
\end{figure}
\begin{remark}[Combining basic strategies]\label{rem:convex-combination}
  The presence of phenotypic diversity regarding different switching strategies within the same Bacillus population (at least wrt.\ the exit strategy from dormancy, see \cite{VV15} and \cite{SD15}) suggests also to investigate in mixtures of switching strategies.  To model this, we let each indivi\-dual choose at birth whether it behaves according to the prescient (with probability $q(e) \in [0,1]$) or the responsive (with probability $1-q(e)$) switching mechanism.  The resulting mean matrices are given by
  \begin{align*}
    M_q^{\mathrm{cc}}(e)
    \;\ldef\;
    q(e) M^{\mathrm{pre}}(e) + (1-q(e)) M^{\mathrm{res}}(e),
  \end{align*}
  still maintaining fair comparison.  Furthermore, these matrices also have determinant 0. (In fact, linear combinations of rank-1-matrices under fair comparison always retain rank 1).  In particular, under fair comparison, the stochastic switcher with $\det M^{\mathrm{sto}}(1) = \det M^{\mathrm{sto}}(2) = 0$ can be represented as the convex combination of the responsive and prescient switcher with $q(1) = \mDor/(\mAct+\mDor)$ and $q(2)=\mAct/(\mAct+\mDor)$.  Hence, we can again compute their fitness explicitly, and this leads to very interesting behaviour.
  
  Inserting the matrices of Example~\ref{exps:strong-advantages}, we obtain for $q(1)=q(2)=q$
  \begin{align*}
    M_q^{\mathrm{cc}}(1)
    \;=\;
    \begin{pmatrix}
      4 (1-q) & 4 q \\
      4(1-q)/5 & 4q/5
    \end{pmatrix}
    \qquad\text{and}\qquad
    M_q^{\mathrm{cc}}(2)
    \;=\;
    \begin{pmatrix}
      4\al q & 4\al (1-q) \\
      4q/5 & 4(1-q)/5
    \end{pmatrix}.
  \end{align*}

  Figure~\ref{fig:phase_diagram_cc} illustrates -- in comparison to Figure~\ref{fig:phase_diagram} -- which influence the convex combination of the basic strategies can have. Very intuitively, the region where $\vp_{\mathrm{cc}}(1/6)$ and $\vp_{\mathrm{cc}}(5/6)$ has an advantage lies between the regions of the responsive/stochastic and stochastic/prescient strategies.  Remarkably, e.g.\ around $(s_1,s_2)=(0.4,0.3)$ for $\al=1/20$ there is even a region where the convex combination of the responsive and prescient strategy with $q=1/6$ yields a \emph{strong} advantage possibly preventing extinction which, however, is certain for the responsive, stochastic and prescient strategy.

  This can be motivated as follows: For $s_1,s_2$ both either small or large, one of the pure strategies (responsive or prescient) seems to be optimal. However, for moderate $s_1,s_2$ environmental variation is high and both fast switching and slow switching environmental phases might occur.  Then, a combination of both strategies ensures that the worst case for neither strategy can affect the whole population.  If one considers the strategy of stochastic switching as a bet-hedging strategy, then using phenotypic diversity to employ a mixture of extreme strategies might be seen as a `second-level' bet-hedging strategy, now with respect to switching behaviour.
\end{remark}
\begin{figure}
  \includegraphics{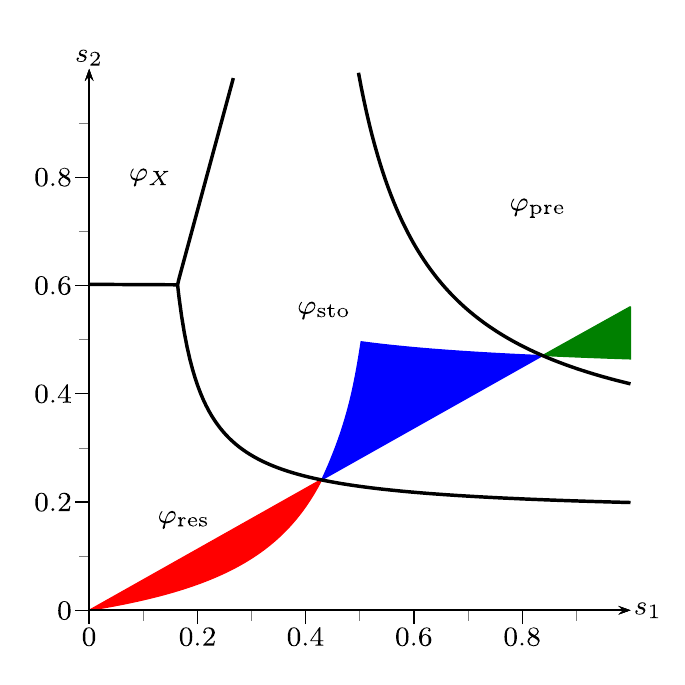}
  \hspace{-0.2cm}
  \includegraphics{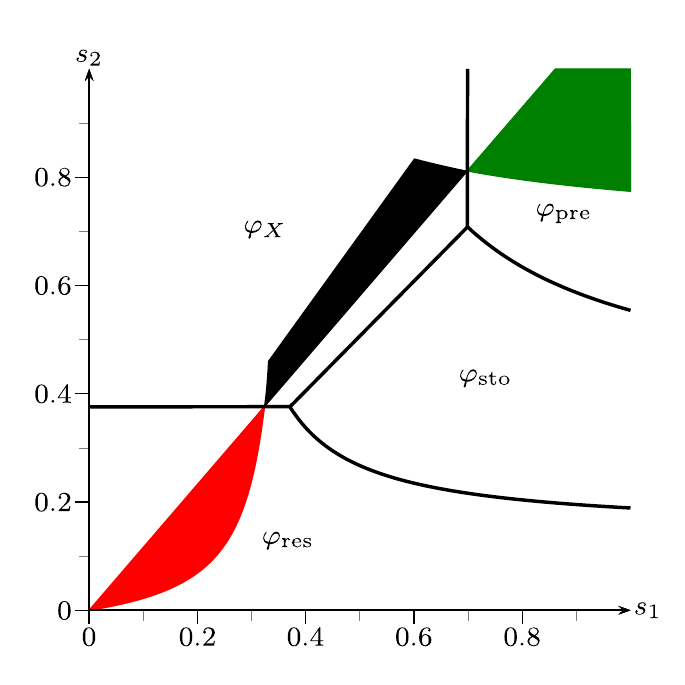}
  \caption{\label{fig:phase_diagram:fair} Phase diagram of the maximal Lyapunov exponents $\vp_X$, $\vp_{\mathrm{res}}$, $\vp_{\mathrm{sto}}$, $\vp_{\mathrm{pre}}$ of Remark~\ref{rem:weighted-fair-comp} with $\al = 1/20$, $\ga = 1/2$ (left) and $\ga = 2$ (right). Strong advantage of $\vp_{\mathrm{res}}$ (red), $\vp_{\mathrm{sto}}$ (blue), $\vp_{\mathrm{pre}}$ (green), and $\vp_{X}$ (black).}
\end{figure}
\begin{remark}[Refinement of fair comparison]\label{rem:weighted-fair-comp}
  The notion of fair comparison implicitly assumes that the production of or conversion into dormant forms is equally costly as the production of active offspring. In many scenarios this will not be realistic. In fact, the production of inactive individuals can be both very efficient  (e.g.\ in seed plants) as well as rather costly (e.g. sporulation of \emph{Bacillus subtilis}, see \cite{PH04}).  Exchanging \eqref{eq:fair-comp-condition} in Definition~\ref{def:fair-comparison} by
  \begin{align*}
    \sum_{i=1}^p m_n^{t,i} \ga^i \;=\; \sum_{i=1}^p \overline{m}_n^{t,i} \ga^i
  \end{align*}
  for some $\ga \in (0,\infty)^p$ leads us to the notion of ``$\ga$-weighted fair comparison''.  For BGWDPREs in the environment $(I_n)$ we are mainly interested in $\ga$-weighted fair comparison with $(\ga^1,\ga^2)=(1,\ga)$ for some $\ga>0$.  Hence, the condition above reads
  \begin{align}
    &(i)\quad
    \mAct^e + \ga \mDor^e
    \;=\;
    \overline{m}_{\textnormal a}^e + \ga \overline{m}_{\textnormal d}^e
    &\text{and}&
    &(ii)\quad
    d^e \;=\; \bar{d}^e.
    \tag{$\ast$}
  \end{align}
  The idea behind  $(\ast)$ is to ensure that both populations still make use of the same amount of resources when producing dormant offspring becomes either less ($\ga < 1$) or more ($\ga >1$) resource consuming than producing active offspring.  This can be seen as one particular way of incorporating a reproductive trade-off (another natural one is the introduction of the parameter $\varepsilon >0$ in the BGWPWD from Section \ref{sec:mot-exp}).   Obviously, we recover the notion of fair comparison for $\ga=1$.
  
  To get some intuition on the influence of $\ga$ on the fitness under fair comparison, we provide an example: Indeed, we adjust Example~\ref{exps:strong-advantages} by setting
  \begin{align*}
    M^{\mathrm{res}}(2)
    &\;=\;
    \begin{pmatrix}
      0 & 4\al/\ga\\
      0 & 4/5
    \end{pmatrix},
    &
    M^{\mathrm{pre}}(1)
    &\;=\;
    \begin{pmatrix}
      0 & 4/\ga\\
      0 & 4/5
    \end{pmatrix},
    \\[1ex]      
    M^{\mathrm{sto}}(1)
    &\;=\;
    \begin{pmatrix}
      4/(1+\ga) & 4/(1+\ga)\\
      2/5 & 2/5
    \end{pmatrix},
    &
    M^{\mathrm{sto}}(2)
    &\;=\;
    \begin{pmatrix}
      4\al/(1+\ga) & 4\al/(1+\ga)\\
      2/5 & 2/5
    \end{pmatrix}.
  \end{align*}
  With this, the four processes from the example satisfy the condition for $\ga$-weighted fair comparison, while we still maintain $\det M^{\mathrm{sto}}(1) = \det M^{\mathrm{sto}}(2) = 0$ to obtain an exact result from Theorem~\ref{thm:stochastic-fitness}.  Also, a convex combination $M_q^{\mathrm{cc}}(e)$ yields $\ga$-weighted fair comparison, although not necessarily retaining rank 1 anymore such that $\vp_{\mathrm{cc}}$ requires simulation.

  \begin{figure}[t]
    \includegraphics{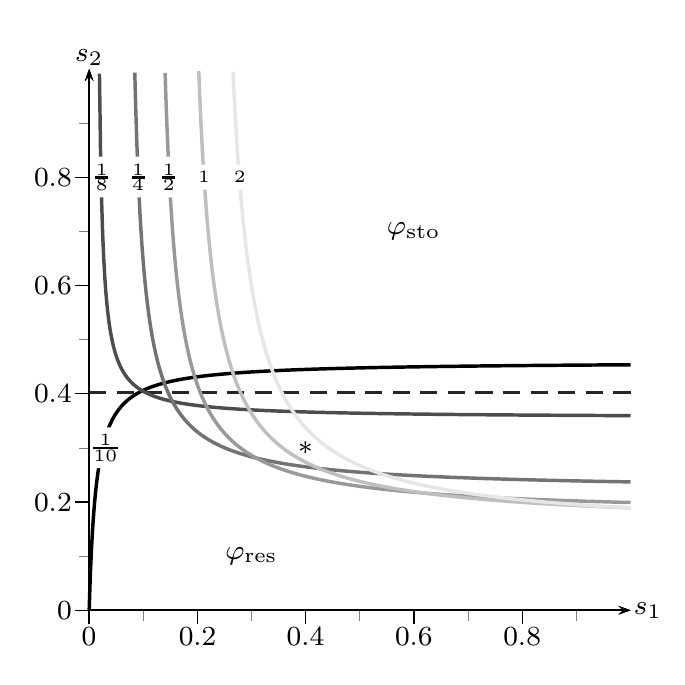}
    \hspace{5mm}
    \includegraphics{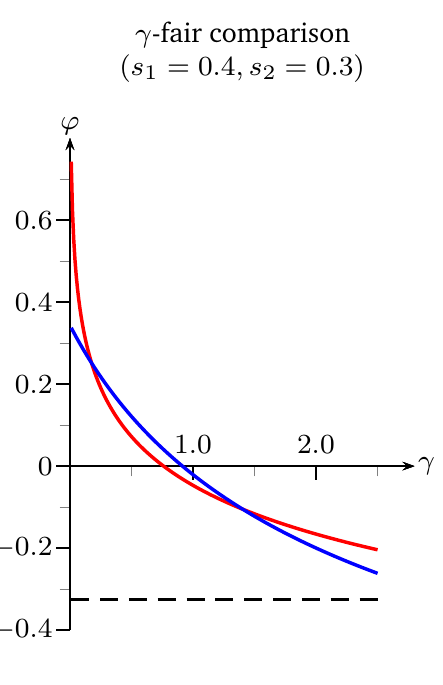}
    \caption{\label{fig:gamma} (Left) Phase diagrams of the maximal Lyapunov exponents $\vp_{\mathrm{res}}$ and $\vp_{\mathrm{sto}}$ of Remark~\ref{rem:weighted-fair-comp} with $\al = 1/20$ under $\ga$-weighted fair comparison for various $\gamma$.  (Right) Lyapunov exponents under $\ga$-weighted fair comparison $s_1=0.4$ and $s_2=0.3$ as functions of $\ga$. Black: $\vp_X$, red: $\vp_{\mathrm{res}}$, blue: $\vp_{\mathrm{sto}}$. Parameters given in Remark~\ref{rem:weighted-fair-comp}.}
  \end{figure}
  Figure~\ref{fig:phase_diagram:fair} illustrates the influence of $\ga$ on the phase diagram in Figure~\ref{fig:phase_diagram} (left).  Here, we see that halving the cost parameter $\ga$ 
  largely enhances the advantages of carrying any dormancy trait, where the advantagous region for $\vp_{\mathrm{sto}}$ seems to increase the most.  Naturally, the switching strategies will always dominate the 1-type process as $\ga$ approaches 0, i.e.\ as dormant offspring become very cost-efficient.
  On the other hand, having a cost parameter $\ga > 1$ shifts the landscape in such a way that the 1-type process overtakes the strong-advantage-region from the stochastic switcher.

  Figure~\ref{fig:gamma} (left) shows the phase diagram of stochastic vs.\ responsive switching for various values of $\ga$, where the separatrix in general is given by the equation
  \begin{align}
    \label{eq:separatrix}
    s_2
    \;=\;
    \frac{
      s_1
      \big(
        \log\big(w^2 + w^2 \tfrac{\mDor}{\mAct}\big)
        - \log\big(\al \mAct + w^2 \tfrac{\mDor}{\mAct}\big)
      \big)
    }
    {
      \log\big(\mAct + w^1 \tfrac{\mDor}{\mAct}\big)
      - \log\big( \mAct + \ga \mDor \big)
      - s_1 \log\big(\frac{\al w^1}{\ga w^2}\big)
    }
  \end{align}
  With the parameters specified above, we observe that, for $\ga = 1/9$, the separatrix becomes a constant function with $s_2 = \log(40/29) / \log(20/9) \approx 0.4027$.  Remarkably, this effect leads to parameter regimes where the fitness of the responsive switcher exceeds that of the stochastic switcher if $\ga$ is either small \emph{or} big, while stochastic switching wins for intermediate $\ga$, e.g.\ at the point $(s_1,s_2)=(0.4,0.3)$ marked by $*$.  This particular case is further depicted in Figure~\ref{fig:gamma} (right), where the respective Lyapunov exponents are plotted as functions of $\ga$. (Note that the 1-type-fitness is constant here, since it is not influenced by the cost of dormant offspring.)
\end{remark}
\begin{figure}
  \includegraphics{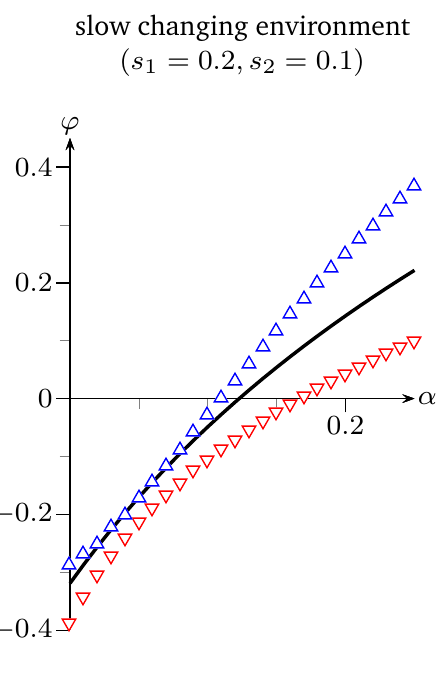}
  \hspace{5mm}
  \includegraphics{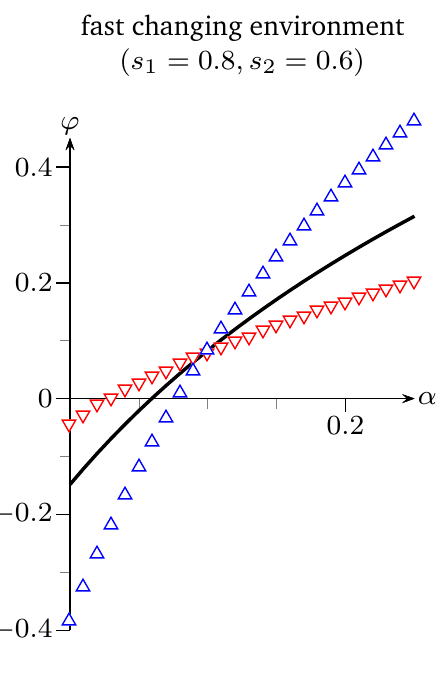}
  \caption{\label{fig:non-zero-det} Left and right: Fair comparison of various stochastic switchers in the regimes of Example~\ref{exps:strong-advantages} (1) and (3) resp., Lyapunov exponents taken as functions of $\al$. Line: $\vp_{\mathrm{sto}}$, $\De,\nabla$: simulated values of $\vp$ for strategies defined by $M_\De,M_\nabla$ resp. from Remark~\ref{rem:non-zero-det}.  
  }
\end{figure}
\begin{remark}[Non-zero determinant case for mean matrices]\label{rem:non-zero-det}
  Rather than combining the pure strategies, one can also compare different stochastic switching strategies under fair comparison, e.g.\ by choosing mean matrices of non-zero determinant, as illustrated in Figure~\ref{fig:non-zero-det}. Here, we add to the setting of Example~\ref{exps:strong-advantages} two further stochastic switchers with matrices
  \begin{align*}
    M^{\mathrm{sto}}_{\De}(e)
    \;=\;
    \begin{pmatrix}
      13/4 \al^{e-1} & 3/4\al^{e-1}
      \\
      2/5 & 2/5
    \end{pmatrix}
    \quad\text{and}\quad
    M^{\mathrm{sto}}_{\nabla}(e)
    \;=\;
    \begin{pmatrix}
      3/4\al^{e-1} & 13/4\al^{e-1}
      \\
      2/5 & 2/5
    \end{pmatrix}
  \end{align*}
  for $e \in \{1,2\}$.  These satisfy the conditions of fair comparison to the processes in Example~\ref{exps:strong-advantages} while $\det M^{\mathrm{sto}}_{\De}(e) = \al^{e-1} > 0$ and $\det M^{\mathrm{sto}}_{\nabla}(e) = -\al^{e-1} < 0$.

  In contrast to the stochastic switcher given by $M^{\mathrm{sto}}$ in Example~\ref{exps:strong-advantages}, the $\De$-matrices describe a strategy that handles both environments more efficiently by producing more active offspring in good times while also staying dormant for a much longer period.  Hence, this strategy leads to an increase of fitness in rarely changing environments, Figure~\ref{fig:non-zero-det} (left).

  The $\nabla$-matrices however, describe a population that almost entirely produces dormant offspring, while dormant individuals wake up quickly -- this seems comparable to strategies employed by plants.  This gives a strategy that prevails in frequently changing and sufficiently harsh environments as seen in Figure~\ref{fig:non-zero-det} (right). This observation is quite intuitive: In the extreme case where the environment changes every generation and bad environments are sufficiently harsh (e.g.\  winter season), the optimal strategy would be to exclusively produce dormant offspring, which wake up immediately to form the next generation. The setting in Figure~\ref{fig:non-zero-det} (right) resembles an approximation of this extreme case.
  
  Figure~\ref{fig:phase_diagram:non-zero_det} illustrates the influence of the determinant on the phase diagram in Figure~\ref{fig:phase_diagram} (left).
\end{remark}
\begin{figure}[t]
  \includegraphics{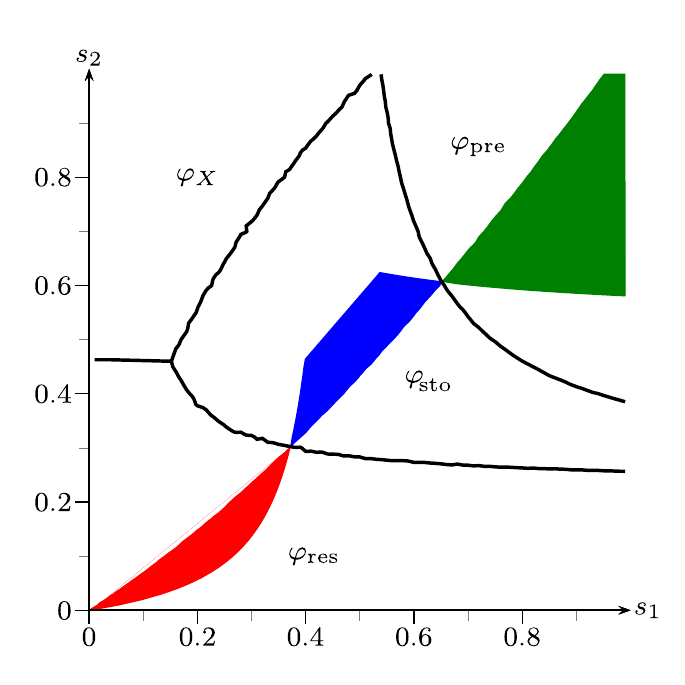}
  \hspace{-0.2cm}
  \includegraphics{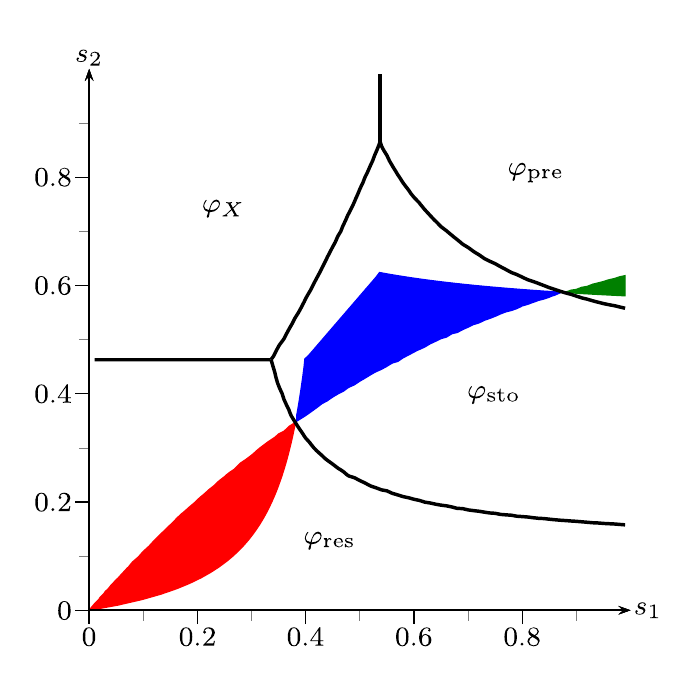}
  \caption{\label{fig:phase_diagram:non-zero_det} Phase diagram of the maximal Lyapunov exponents $\vp_X$, $\vp_{\mathrm{res}}$, $\vp_{\mathrm{sto}}$, $\vp_{\mathrm{pre}}$ of Example~\ref{exps:strong-advantages} and from Remark~\ref{rem:non-zero-det} under fair comparison with $\al = 1/20$. Simulated values of $\vp_{\mathrm{sto}}$ defined by $M_{\De}^{\mathrm{sto}}$ (left) and $M_{\nabla}^{\mathrm{sto}}$ (right). Strong advantage of $\vp_{\mathrm{res}}$ (red), $\vp_{\mathrm{sto}}$ (blue), and $\vp_{\mathrm{pre}}$ (green).}
\end{figure}
Of course, the above examples invite a much larger and systematic study of parameter ranges and switching strategies, but we think that this goes beyond the scope of the present paper, with its focus on mathematical methods. Indeed, in the next section, we try to get some systematic insight into methods for the computation of Lyapunov exponents in our dormancy scenario, including a review of methods that have been used in similar modeling set-ups so far.

\section{Technical results for rank-1 and rank-2 switching strategies}
\label{sec:theory}
In this section, we provide theoretical results for the explicit computation and bounds for maximal Lyapunov exponents related to switching strategies in BGWPDRE. We obtain proofs for Theorems~\ref{thm:responsive-fitness}, \ref{thm:stochastic-fitness} (rank-1 case) and \ref{thm:dirac-lower-bound} (rank-2 case). We also provide additional bounds in the rank-2 case and a short literature review.

\subsection{Rank-1-matrices and exact results}\label{sec:rank-one-case}
Consider a stationary and ergodic sequence $(M_1, M_2, \ldots)$ of non-negative $p \times p$ matrices such that, for any $n \in \bbN$, the rank of $M_n$ is equal to one.  Hence, for any $n \in \bbN$, we can find column vectors $\ell_n, r_n \in [0, \infty)^p$ such that $M_n = \ell_n \cdot r_n^\top$.  Note that in this case, only one eigenvalue of $M_n$ is non-zero and, as a consequence, $\vr(M_n) = \tr M_n$.
\begin{lemma}\label{lem:lyap-exp-det0}
  Let $(M_1, M_2, \ldots)$ be a stationary and ergodic sequence of non-negative $p \times p$ matrices with $M_n = \ell_n \cdot r_n^\top$ for any $n \in \bbN$.  Suppose that $\mean\!\big[\log^+\|\ell_1\|\big] < \infty$ and $\mean\!\big[ \log^+ \|r_1\|\big] < \infty$.  Then, $\prob$-a.s. and in mean, 
  \begin{align}\label{eq:lyap-exp-det0}
    \vp
    \;=\;
    \lim_{n \to \infty} \frac{1}{n} \log \|M_1 \cdot \ldots \cdot M_n\|
    \;=\;
    \mean\!\big[ \log\, \langle r_1, \ell_2 \rangle \big].
  \end{align}
\end{lemma}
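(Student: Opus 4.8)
The plan is to exploit the fact that products of rank-one matrices telescope. First I would fix once and for all a measurable choice of the factors: e.g.\ on $\{M_n \neq 0\}$ let $\ell_n$ be the first non-zero column of $M_n$ and $r_n$ the unique vector with $M_n = \ell_n r_n^{\top}$, and set $\ell_n = r_n = 0$ on $\{M_n = 0\}$; then $(\ell_n, r_n)$ is a fixed measurable image of $M_n$, so the sequence $((\ell_n, r_n))_n$ --- and with it $Y_n \ldef \log\langle r_n, \ell_{n+1}\rangle$ --- inherits stationarity and ergodicity from $(M_n)$. I would dispose of the degenerate case separately: if $\prob[M_1 = 0] > 0$, then by ergodicity a zero factor occurs almost surely, whence $M_1\cdots M_n = 0$ eventually and $\vp = -\infty$, while $\langle r_1, \ell_2\rangle = 0$ on the positive-probability event $\{M_2 = 0\}$, so the right-hand side of \eqref{eq:lyap-exp-det0} equals $-\infty$ as well; hence I may and do assume $\ell_n \neq 0 \neq r_n$ almost surely.

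From $(\ell_j r_j^{\top})(\ell_{j+1} r_{j+1}^{\top}) = \langle r_j, \ell_{j+1}\rangle\, \ell_j r_{j+1}^{\top}$ one obtains by induction
\begin{align*}
  M_1 \cdots M_n
  \;=\;
  \Bigl(\prod_{k=1}^{n-1}\langle r_k, \ell_{k+1}\rangle\Bigr)\, \ell_1\, r_n^{\top}.
\end{align*}
Since inner products of non-negative vectors are non-negative, and $\|\ell_1 r_n^{\top}\|$ agrees with $\|\ell_1\|\,\|r_n\|$ up to a bounded multiplicative factor (norm equivalence --- recall $\vp$ is independent of the norm), taking logarithms gives $\tfrac1n\log\|M_1\cdots M_n\| = A_n + B_n + C_n + o(1)$ with $A_n \ldef \tfrac1n\sum_{k=1}^{n-1} Y_k$, $B_n \ldef \tfrac1n\log\|\ell_1\|$, $C_n \ldef \tfrac1n\log\|r_n\|$. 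By Cauchy--Schwarz $Y_1^{+} \le \log^{+}\|r_1\| + \log^{+}\|\ell_2\|$, so $\mean[Y_1^{+}] < \infty$ by hypothesis and stationarity, and Birkhoff's ergodic theorem yields $A_n \to \mean[Y_1] = \mean[\log\langle r_1,\ell_2\rangle] \in [-\infty,\infty)$ almost surely; trivially $B_n \to 0$; and since also $\log^{+}\|M_1\| \le \log^{+}\|\ell_1\| + \log^{+}\|r_1\| + O(1)$ gives $\mean[\log^{+}\|M_1\|] < \infty$, Kingman's subadditive ergodic theorem \cite{Ki73} (recalled above) guarantees $A_n + B_n + C_n \to \vp$ almost surely and in mean, with $\vp$ a.s.\ constant.

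It thus remains to show $C_n \to 0$, which is the one genuinely delicate step: the boundary term $\tfrac1n\log\|r_n\|$ is not bounded below by the moment hypotheses (which control $\log^{+}\|r_1\|$ but not $\log^{-}\|r_1\|$), so a direct Borel--Cantelli estimate only gives $\limsup_n C_n \le 0$. I would finish as follows. If $\mean[Y_1] = -\infty$ then $A_n \to -\infty$, and together with $B_n \to 0$ and $\limsup_n C_n \le 0$ this forces $\vp = -\infty$, matching $\mean[Y_1]$. If $\mean[Y_1]$ is finite, then the three convergences above show that $C_n = \tfrac1n\log\|r_n\|$ converges almost surely to the constant $c \ldef \vp - \mean[Y_1] \in [-\infty,\infty)$; but $(\log\|r_n\|)_n$ is stationary with $\log\|r_1\|$ real-valued almost surely, and a stationary sequence $(X_n)$ with $X_1$ finite a.s.\ cannot satisfy $X_n/n \to c$ for a non-zero $c \in [-\infty,\infty]$ --- otherwise, fixing $R$ with $0 < R < |c|$, one would have $\prob[\,|X_1| \ge Rn\,] = \prob[\,|X_n| \ge Rn\,] \to 1$, contradicting $\prob[\,|X_1| \ge Rn\,] \to \prob[\,|X_1| = \infty\,] = 0$. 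Hence $c = 0$, i.e.\ $\vp = \mean[\log\langle r_1, \ell_2\rangle]$, and the convergence in mean is part of the quoted Kingman theorem.
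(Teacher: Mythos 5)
Your argument is correct, and its skeleton is the same as the paper's: telescope the rank-one product into $\bigl(\prod_{k=1}^{n-1}\langle r_k,\ell_{k+1}\rangle\bigr)\,\ell_1 r_n^{\top}$, apply Birkhoff to $Y_k=\log\langle r_k,\ell_{k+1}\rangle$ (for which $\mean[Y_1^+]<\infty$ suffices), invoke Kingman for the existence of $\vp$, and then kill the boundary term. The two proofs part ways only at that last step. The paper splits on whether $\vp>-\infty$: in that case it deduces that $\log\|\ell_1\|$ and $\log\|r_1\|$ lie in $L^1(\prob)$ (both signs) and runs a Borel--Cantelli estimate on $|\log\|\ell_1\cdot r_n^{\top}\||$, and in the remaining case it argues that $\mean[Y_1]=-\infty$ directly. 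You split instead on whether $\mean[Y_1]$ is finite and, in the finite case, note that $\tfrac1n\log\|r_n\|$ must converge to a constant, which for a stationary sequence of a.s.\ finite variables forces that constant to be $0$. This is a genuinely nice variant: it never needs integrability of $\log^-\|r_1\|$ and replaces the Borel--Cantelli computation by a one-line distributional argument, while the paper's route has the small advantage of producing the uniform bound $\sup_n\mean[|\log\|\ell_1\cdot r_n^{\top}\||]<\infty$ that it reuses for the convergence in mean (which you instead, legitimately, read off from Kingman after the a.s.\ identification). One caveat: your canonical re-factorization ($\ell_n$ the first non-zero column of $M_n$) rescales the given factors by the first non-zero entry $(r_n)_{j_n}$, and this can destroy the hypothesis $\mean[\log^+\|r_1\|]<\infty$ and, if $\log (r_1)_{j_1}\notin L^1(\prob)$, even shift the value of $\mean[\log\langle r_1,\ell_2\rangle]$; the intended reading --- the one the paper uses, with $\ell_n=\ell(I_n)$ and $r_n=r(I_n)$ in every application --- is that the factor sequence $((\ell_n,r_n))_n$ is itself part of the given stationary ergodic data, so you should assume that rather than construct a canonical choice.
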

\begin{proof}
  First, note that both $\mean\!\big[\log^+ \langle r_1, \ell_2 \rangle\big] < \infty$ and $\mean\!\big[\log^+ \|M_1\|\big] < \infty$.  The latter ensures that the maximal Lyapunov exponent, $\vp$, exists $\prob$-a.s.\ and in mean.  Thus it remains to show that $\vp$ is equal to the expression on the right-hand side of \eqref{eq:lyap-exp-det0}. 

  In order to apply \cite[Theorem~1]{Ki73} which ensures that $\vp$ is finite, we first assume that $\mean\!\big[\log \|M_1 \cdot \ldots \cdot M_n\|\big] \geq -A n$ for some $A \in [0, \infty)$ and all $n \in \bbN$.  This implies that $\log\, \langle r_1, \ell_2 \rangle \in L^1(\prob)$, $\log \|\ell_1\| \in L^1(\prob)$ and $\log\|r_1\| \in L^1(\prob)$.  In particular, $\prob$-a.s., $\langle r_n, \ell_{n+1} \rangle > 0$ for all $n \in \bbN$.  Since
  \begin{align}\label{eq:splitting:rank_1}
    \frac{1}{n} \log \|M_1 \cdot \ldots \cdot M_n\|
    \;=\;
    \frac{1}{n} \sum_{i=1}^{n-1} \log\, \langle r_i, \ell_{i+1} \rangle
    \,+\,
    \frac{1}{n} \log \|\ell_1 \cdot r_n^\top\|
  \end{align}
  we immediately deduce from Birkhoff's ergodic theorem that the first term on the right-hand side of \eqref{eq:splitting:rank_1} converges, $\prob$-a.s.\ and in $L^1(\prob)$, to $\mean\!\big[\log \langle \ell_1, r_2 \rangle\big]$ as $n \to \infty$.  Since $\sup_{n \in \bbN} \mean\!\big[|\log \|\ell_1 \cdot r_n^\top\| |\big] < \infty$, it follows that $\lim_{n \to \infty} \frac{1}{n} \mean\!\big[| \log \|\ell_1 \cdot r_n^\top\||\big] = 0$, and \eqref{eq:lyap-exp-det0} holds in mean.  Moreover, for any $\ve > 0$
  \begin{align*}
    \sum_{n=1}^{\infty}
    \prob\!\big[|\log \|\ell_1 \cdot r_n^\top\| | \geq \ve n\big]
    \;\leq\;
    \frac{2 C}{\ve}
    \Big(
      \mean\!\big[|\log \|\ell_1\||\big]
      \,+\, \mean\!\big[|\log \|r_1\||\big]
    \Big)
    \;<\;
    \infty,
  \end{align*}
  where the constant $C \geq 1$ appearing in the computation above results from the comparison of equivalent matrix norms.  Thus, by using the Borel-Cantelli Lemma we conclude that, $\prob$-a.s., $\lim_{n \to \infty} \frac{1}{n} \log \|\ell_1 \cdot r_n^\top\| = 0$, and \eqref{eq:lyap-exp-det0} follows.
  
  However, if the additional assumption above is violated then we conclude that $\mean\!\big[\log \langle r_1, \ell_2 \rangle\big] = -\infty$.  Thus, by \cite[Theorem~2]{Ki73}, the maximal Lyapunov exponent, $\vp$, as well as the limit of the sum on the right-hand side of \eqref{eq:splitting:rank_1} exists with probability one, and $\vp = -\infty$.  Using that $\sup_{n \in \bbN} \mean\!\big[\log^+ \|\ell_1 \cdot r_n^\top\|\big] < \infty$ concludes the proof.
\end{proof}
\begin{corro}\label{cor:lyap-exp-det0}
  Let $Z$ be a $p$-type BGWPRE in environment $(I_n)$ given in Definition~\ref{def:environment_process}.  Suppose that $\rk M(e) = 1$, $\tr M(e) > 0$ for any $e \in \{1,2\}$, and $\tr ( M(1) \cdot M(2) ) > 0$.  Then, $\prob$-a.s.,
  \begin{align*}
    \vp_Z
    \;=\;
    \tfrac{s_2}{s_1+s_2}\log \tr M(1) + \tfrac{s_1}{s_1+s_2}\log\tr M(2)
    + \tfrac{s_1s_2}{s_1+s_2}
    \log\Big(\frac{\tr(M(1)M(2))}{\tr M(1)\tr M(2)}\Big).
  \end{align*}
\end{corro}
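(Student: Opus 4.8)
The plan is to read the result off Lemma~\ref{lem:lyap-exp-det0} after evaluating the expectation appearing there explicitly in terms of the stationary law and transition matrix of $(I_n)$. First I would fix, for each $e \in \{1,2\}$, a rank-$1$ factorisation $M(e) = \ell^{(e)}(r^{(e)})^\top$ with $\ell^{(e)}, r^{(e)} \in [0,\infty)^p$; this exists since $\rk M(e) = 1$, and although it is only unique up to the rescaling $\ell^{(e)} \mapsto c_e \ell^{(e)}$, $r^{(e)} \mapsto c_e^{-1} r^{(e)}$, that ambiguity will cancel at the end. For the BGWPRE $Z$ in the environment $(I_n)$ the sequence of mean matrices is $M_n = M(I_n)$, which is stationary and ergodic, with $\ell_n = \ell^{(I_n)}$ and $r_n = r^{(I_n)}$. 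Since $(I_n)$ takes only the two values $1,2$ and $M(1), M(2)$ have finite entries, the integrability hypotheses $\mean[\log^+\|\ell_1\|] < \infty$ and $\mean[\log^+\|r_1\|] < \infty$ of Lemma~\ref{lem:lyap-exp-det0} hold trivially, and the lemma gives $\vp_Z = \mean[\log\langle r_1, \ell_2 \rangle]$ $\prob$-a.s.

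Next I would translate everything back into traces. From $M(e) = \ell^{(e)}(r^{(e)})^\top$ one reads off $\tr M(e) = \langle r^{(e)}, \ell^{(e)}\rangle$ and, for $e,f \in \{1,2\}$, $\tr\big(M(e)M(f)\big) = \langle r^{(e)}, \ell^{(f)}\rangle\,\langle r^{(f)}, \ell^{(e)}\rangle$. The hypotheses $\tr M(e) > 0$ and $\tr(M(1)M(2)) > 0$ then force $\langle r^{(e)}, \ell^{(f)}\rangle > 0$ for all $e,f \in \{1,2\}$ (nonnegative vectors with positive product), so every logarithm below is finite and, in particular, $\vp_Z > -\infty$.

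Then I would expand the expectation. Since $(I_n)$ is stationary with $I_1 \sim \pi_I$ and transition matrix $P_I$, the pair $(I_1,I_2)$ has law $\prob[I_1 = e,\, I_2 = f] = \pi_I(e) P_I(e,f)$, whence
\begin{align*}
  \vp_Z
  \;=\;
  \sum_{e,f \in \{1,2\}} \pi_I(e)\, P_I(e,f)\, \log\langle r^{(e)}, \ell^{(f)}\rangle.
\end{align*}
Inserting $\pi_I = \big(\tfrac{s_2}{s_1+s_2}, \tfrac{s_1}{s_1+s_2}\big)$ and the entries of $P_I$, the two diagonal terms contribute $\tfrac{s_2(1-s_1)}{s_1+s_2}\log\tr M(1) + \tfrac{s_1(1-s_2)}{s_1+s_2}\log\tr M(2)$, while the two off-diagonal terms carry the \emph{same} weight $\pi_I(1)P_I(1,2) = \pi_I(2)P_I(2,1) = \tfrac{s_1 s_2}{s_1+s_2}$ (the reversibility of any two-state chain), so they combine into $\tfrac{s_1 s_2}{s_1+s_2}\big(\log\langle r^{(1)},\ell^{(2)}\rangle + \log\langle r^{(2)},\ell^{(1)}\rangle\big) = \tfrac{s_1 s_2}{s_1+s_2}\log\tr(M(1)M(2))$. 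Writing $s_2(1-s_1) = s_2 - s_1 s_2$ and $s_1(1-s_2) = s_1 - s_1 s_2$ and collecting terms yields
\begin{align*}
  \vp_Z
  \;=\;
  \tfrac{s_2}{s_1+s_2}\log\tr M(1)
  \;+\; \tfrac{s_1}{s_1+s_2}\log\tr M(2)
  \;+\; \tfrac{s_1 s_2}{s_1+s_2}\log\Big(\tfrac{\tr(M(1)M(2))}{\tr M(1)\,\tr M(2)}\Big),
\end{align*}
which is the claim.

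There is no genuinely hard step here, as Lemma~\ref{lem:lyap-exp-det0} already does the analytic work; the only point requiring care is that the factorisation $M(e) = \ell^{(e)}(r^{(e)})^\top$ is not canonical. But the rescaling $\ell^{(e)}\mapsto c_e\ell^{(e)}$, $r^{(e)}\mapsto c_e^{-1}r^{(e)}$ multiplies $\langle r^{(e)},\ell^{(f)}\rangle$ by $c_f/c_e$ and $\langle r^{(f)},\ell^{(e)}\rangle$ by $c_e/c_f$, and since these reciprocal terms occur with equal weights in the expansion above, their sum — hence $\vp_Z$ — is independent of the choice, as is also manifest in the trace form. I would therefore expect the final write-up to consist of the two short computations above together with this remark on scale invariance.
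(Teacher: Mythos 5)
Your proposal is correct and follows essentially the same route as the paper: apply Lemma~\ref{lem:lyap-exp-det0}, expand $\mean[\log\langle r(I_0),\ell(I_1)\rangle]$ over the joint law $\pi_I(i)P_I(i,j)$, and convert the inner products back to traces via $\langle r(e),\ell(e)\rangle=\tr M(e)$, $\langle r(1),\ell(2)\rangle\langle r(2),\ell(1)\rangle=\tr(M(1)M(2))$ and the reversibility identity $\pi_I(1)P_I(1,2)=\pi_I(2)P_I(2,1)$. The additional checks you include (integrability, positivity of all four inner products, and invariance under rescaling of the rank-one factorisation) are left implicit in the paper but are correct and worth recording.
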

\begin{proof}
  By Lemma~\ref{lem:lyap-exp-det0}, it holds that, $\prob$-a.s.,
  \begin{align*}
    \vp_Z
    \;=\;
    \mean\!\big[ \log\, \langle r(I_0), \ell(I_1) \rangle \big]
    \;=\;  
    \sum_{i,j \in \{1,2\}} \pi_I(i)\, P_I(i,j)
    \log\,\langle r(i), \ell(j) \rangle.
  \end{align*}
  By using that $\langle \ell(e), r(e)\rangle = \tr M(e)$ for any $e \in \{1,2\}$, $\langle r(1), \ell(2)\rangle \langle r(2), \ell(1)\rangle = \tr (M(1) \cdot M(2))$ and $\pi_I(1) P_I(1,2) = \pi_I(2) P_I(2,1)$, the assertion follows.  
\end{proof}
\begin{proof}[Proof of Theorem~\ref{thm:responsive-fitness}, \ref{thm:prescient-fitness} and \ref{thm:stochastic-fitness}]
  This follows directly from Corollary~\ref{cor:lyap-exp-det0}.
\end{proof}
\begin{remark}[Connection to \cite{DMB11}]
  \label{rem:discussion:DMB11} \
  \begin{enumerate}
  \item Similarly to \cite[Propositions~1 and 7]{DMB11}, the responsive switcher can be regarded as a $1$-type BGWPRE process in a more complex random environment, here given by $((I_n,I_{n+1}))_n$ with corresponding offspring means $m_{1,i} = m_i$ and $m_{2,i} = 1-d_i$ for $i \in \{1,2\}$.  With this, Theorem~\ref{thm:responsive-fitness} follows by applying the Ergodic Theorem.
  \item For a given fixed mean offspring per type and environment, say $(m_t(e))_{1\leq t \leq p}$ for $e \in \{1,2\}$, consider a collection of distributions of offspring types -- say $\nu_t(e) \in \bbR_{\geq0}^p$ for $1 \leq t \leq p$, $e \in \{1,2\}$ -- as reproduction strategy.  In \cite{DMB11} the maximal Lyapunov exponent can only be computed explicitly in the so-called \emph{non-hereditary} case, that is, when the distributions of offspring types do not depend on the parent type, $\nu_t(e) = \nu(e)$.  Regarding $m(e) = (m_t(e))_t$ and $\nu(e)$ as column vectors in $\bbR_{\geq0}^p$, the corresponding mean matrices are
    \begin{align*}
      M(e)
      \;=\;
      m(e)\cdot\nu(e)^\top
    \end{align*}
    and thus, of rank 1.  Hence, the case in which \cite{DMB11} obtain exact results for $\vp_Z$ aligns with the case where we do.

    A natural generalization is to give type-$t$-individuals an offspring type distribution depending on $e$ as a convex combination of two distributions, say $\nu(e)$ and $\mu(e)$. This provides a simple example of the \emph{hereditary} case and produces mean matrices of rank at most 2.
  \end{enumerate}
\end{remark}
\subsection{Rank-2-matrices and bounds}\label{sec:rank-2-case}
This section provides an overview of several bounds for upper Lyapunov exponents that can be found in the literature. A comparison of these with respect to the application to stochastic switching will be given as well as remarks on potential improvements.

In the sequel, consider a stationary and ergodic sequence $(M_1, M_2, \ldots)$ of non-negative $p \times p$ matrices such that, for any $n \in \bbN$, the rank of $M_n$ is at most two, i.e.\ we can find column vectors $\ell_n^i, r_n^i \in [0, \infty)^p$, $i \in \{1,2\}$ such that 
\begin{align*}
  M_n \;=\; \sum_{i=1}^2 \ell_n^i \cdot (r_n^i)^\top
\end{align*}
for any $i \in \bbN$. Further, for any $i \in \bbN$, we denote by $A_{n,n+1}$ a non-negative $2 \times 2$ matrix that is defined by
\begin{align}
  A_{n,n+1}
  \;\ldef\;
  \begin{pmatrix}
    \langle r_n^1, \ell_{n+1}^1 \rangle & \langle r_n^1, \ell_{n+1}^2 \rangle
    \\[.5ex]
    \langle r_n^2, \ell_{n+1}^1 \rangle & \langle r_n^2, \ell_{n+1}^2 \rangle
  \end{pmatrix}.
\end{align}
Note that the sequence $(A_{1,2}, A_{2,3}, \ldots)$ is as well stationary and ergodic.
\begin{remark}\label{rem:matrix-decomp}
  There are several ways to decompose a non-negative $2\times2$-matrix into the sum of two products of non-negative vectors, e.g.\ for any $a,b,c,d\geq0$ and $ab>0$ it holds
  \begin{align}
    A
    \;\ldef\;
    \begin{pmatrix}
      a&b\\c&d
    \end{pmatrix}
    &\;=\;
    \begin{pmatrix} 1 \\ 0 \end{pmatrix}
    \cdot
    \begin{pmatrix} a & b \end{pmatrix}
    + \begin{pmatrix} 0 \\ 1 \end{pmatrix}
    \cdot
    \begin{pmatrix} c & d \end{pmatrix}
    \label{eq:vec-decom-row}\\[1em]
    &\;=\;
    \begin{pmatrix} a \\ c \end{pmatrix}
    \cdot
    \begin{pmatrix} 1 & 0 \end{pmatrix}
    + \begin{pmatrix} b \\ d \end{pmatrix}
    \cdot
    \begin{pmatrix} 0 & 1 \end{pmatrix}
    \label{eq:vec-decom-column}\\[1em]
    &\;=\;
    \begin{pmatrix} a \\ c \end{pmatrix}
    \cdot
    \begin{pmatrix} 1 & \frac ba \end{pmatrix}
    + \begin{pmatrix} 0 \\ 1 \end{pmatrix}
    \cdot
    \begin{pmatrix} 0 & \frac{\det A}a \end{pmatrix}
    \label{eq:vec-decom-pos-det}\\[1em]
    &\;=\;
    \begin{pmatrix} b \\ d \end{pmatrix}
    \cdot
    \begin{pmatrix} \tfrac ab & 1 \end{pmatrix}
    + \begin{pmatrix} 0 \\ 1 \end{pmatrix}
    \cdot
    \begin{pmatrix} \frac{-\det A}b & 0 \end{pmatrix},
    \label{eq:vec-decom-neg-det}
  \end{align}
  where the entries in \eqref{eq:vec-decom-row} and \eqref{eq:vec-decom-column} are always non-negative, in \eqref{eq:vec-decom-pos-det} they are non-negative if $\det A\geq0$ and in the last if $\det A\leq 0$. Notably, \eqref{eq:vec-decom-column} corresponds to writing $M(e)$ as convex combination of responsive and prescient switchers as indicated in Remark~\ref{rem:convex-combination}.
\end{remark}
\begin{lemma}\label{lem:lyap-exp-rank2}
  Let $(M_1, M_2, \ldots)$ be a stationary and ergodic sequence of non-negative $p \times p$ matrices with $M_n = \sum_{i=1}^2 \ell_n^i \cdot (r_n^i)^\top$ for any $n \in \bbN$ and $\log \|\ell_1^i\|, \log \|r_1^i\| \in L^1(\prob)$ for any $i \in \{1,2\}$.  Then, $\prob$-a.s. and in mean, 
  \begin{align}\label{eq:lyap-exp-rank2}
    \vp
    \;=\;
    \lim_{n \to \infty} \frac{1}{n} \log \|M_1 \cdot \ldots \cdot M_n\|
    \;=\;
    \lim_{n \to \infty}
    \frac{1}{n} \log \|A_{1,2} \cdot \ldots \cdot A_{n-1,n}\|.
  \end{align}
\end{lemma}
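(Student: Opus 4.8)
The plan is to reduce the product $M_1\cdots M_n$ of $p\times p$ matrices to a product of $2\times 2$ matrices by a telescoping identity, and then to transfer the maximal Lyapunov exponent across this reduction. Concretely, I would collect the decomposition vectors into $p\times 2$ matrices $L_n \ldef (\ell_n^1 \mid \ell_n^2)$ and $R_n \ldef (r_n^1 \mid r_n^2)$, so that $M_n = L_n R_n^\top$ and $A_{n,n+1} = R_n^\top L_{n+1}$; the sequence $(L_n, R_n)_n$, hence also $(A_{n,n+1})_n$, is stationary and ergodic. Repeatedly inserting the factors $R_k^\top L_{k+1}$ then yields, for every $n\in\bbN$,
\begin{align*}
  M_1 \cdots M_n \;=\; L_1 \left( \prod_{k=1}^{n-1} A_{k,k+1} \right) R_n^\top
  \qquad\text{and}\qquad
  \prod_{k=1}^{n} A_{k,k+1} \;=\; R_1^\top \,(M_2 \cdots M_n)\, L_{n+1}.
\end{align*}

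Next I would record existence of the two relevant exponents together with the negligibility of the ``boundary'' factors. From $\|M_1\|\le C\sum_i\|\ell_1^i\|\,\|r_1^i\|$ and $0\le\langle r_1^i,\ell_2^j\rangle\le\|r_1^i\|\,\|\ell_2^j\|$, the integrability hypotheses give $\mean[\log^+\|M_1\|]<\infty$ and $\mean[\log^+\|A_{1,2}\|]<\infty$, so Kingman's subadditive ergodic theorem (used exactly as in the proof of Lemma~\ref{lem:lyap-exp-det0}) provides $\vp=\lim_n\tfrac1n\log\|M_1\cdots M_n\|$ and $\vp_A\ldef\lim_n\tfrac1n\log\|A_{1,2}\cdots A_{n,n+1}\|$ in $[-\infty,\infty)$, $\prob$-a.s.\ and in $L^1(\prob)$. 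Moreover $\log\|L_1\|,\log\|R_1\|\in L^1(\prob)$, and a Borel--Cantelli argument identical to the one in Lemma~\ref{lem:lyap-exp-det0}, using stationarity, shows $\tfrac1n\log\|L_n\|\to0$ and $\tfrac1n\log\|R_n\|\to0$, $\prob$-a.s.\ and in mean.

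The conclusion then follows from a two-sided comparison. Applying sub-multiplicativity of the norm to the first identity,
\begin{align*}
  \tfrac1n\log\|M_1\cdots M_n\|
  \;\le\;
  \tfrac1n\log\|L_1\|
  + \tfrac{n-1}{n}\cdot\tfrac{1}{n-1}\log\left\| \prod_{k=1}^{n-1} A_{k,k+1} \right\|
  + \tfrac1n\log\|R_n^\top\|,
\end{align*}
and letting $n\to\infty$ gives $\vp\le\vp_A$; applying sub-multiplicativity to the second identity gives, analogously, $\vp_A\le\vp$ (here one also uses $\tfrac1n\log\|M_2\cdots M_n\|\to\vp$ $\prob$-a.s., which is the subadditive ergodic theorem applied to the shifted stationary sequence $(M_n)_{n\ge2}$). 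Hence $\vp=\vp_A$, which is \eqref{eq:lyap-exp-rank2}; equality of the $L^1$-limits then follows from the mean-convergence statements above together with $\vp=\vp_A$.

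I expect the only genuine subtlety to be bookkeeping in the degenerate case: $\vp$ may equal $-\infty$ --- precisely when some finite product of the $A_{k,k+1}$, equivalently of the $M_k$, is the zero matrix --- and one must check that the one-sided inequalities above are read correctly as inequalities in $[-\infty,\infty)$ (noting that $\|L_1\|,\|R_n\|>0$ $\prob$-a.s., so only the middle term can be $-\infty$) and still combine to equality. The telescoping identities and the vanishing of the boundary terms are routine given the $L^1$-hypotheses on $\log\|\ell_1^i\|$ and $\log\|r_1^i\|$.
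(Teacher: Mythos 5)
Your proof is correct and follows essentially the same route as the paper's: both rest on the telescoping factorization $M_1\cdots M_n = L_1\,(A_{1,2}\cdots A_{n-1,n})\,R_n^\top$, Kingman's theorem for the existence of both exponents, and the Borel--Cantelli argument from Lemma~\ref{lem:lyap-exp-det0} showing that the boundary factors are negligible. The only cosmetic difference is that the paper gets the two-sided comparison in one stroke from the exact identity $\|M_1\cdots M_n\| = \sum_{i,j}(A_{1,2}\cdots A_{n-1,n})^{i,j}\,\|\ell_1^i\|_1\,\|r_n^j\|_1$ for the entrywise norm, whereas you derive the reverse inequality from the second telescoping identity $A_{1,2}\cdots A_{n,n+1}=R_1^\top (M_2\cdots M_n) L_{n+1}$; both are valid.
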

\begin{proof}
  First, by an elementary computation, we get that $\mean\!\big[\log^+ \|M_1\|\big] < \infty$ and $\mean\!\big[\log^+ \|A_{1,2}\|\big] < \infty$.  Thus, \cite[Theorem~6]{Ki73} implies that
  \begin{align*}
    \lim_{n \to \infty} \frac{1}{n} \log \|M_1 \cdot \ldots \cdot M_n\|
    \qquad \text{and} \qquad
    \lim_{n \to \infty}
    \frac{1}{n} \log \|A_{1,2} \cdot \ldots \cdot A_{n-1,n}\|
  \end{align*}
  exist $\prob$-a.s.\ and in mean.  Thus, we are left with showing that both limits coincide.  Recall the limit does not depend on the chosen matrix norm.  Thus, for the matrix norm $\|B\| \ldef \sum_{i,j=1}^p |B^{i,j}|$ we obtain
  \begin{align*}
    \| M_1 \cdot \ldots \cdot M_n \|
    \;=\;
    \sum_{i,j=1}^2 \big(A_{1,2} \cdot \ldots \cdot A_{n-1,n}\big)^{i,j}\,
    \|\ell_1^i\|_1\, \|r_n^j\|_1.
  \end{align*}
  By setting $R_n \ldef \sum_{i=1}^2 (|\log \|\ell_1^i\|_1| + |\log \|r_n^i\|_1 |)$ for any $n \in \bbN$, it follows that
  \begin{align*}
    -\frac{1}{n} R_n
    \;\leq\;
    \frac{1}{n} \log \|M_1 \cdot \ldots \cdot M_n\|
    \,-\,
    \frac{1}{n} \log \|A_{1,2} \cdot \ldots \cdot A_{n-1,n} \|
    \;\leq\;
    \frac{1}{n} R_n.
  \end{align*}
  Thus, by using the same argument as in the proof of Lemma~\ref{lem:lyap-exp-det0}, we obtain that, $\prob$-a.s.\ and in mean, $\lim_{n \to \infty} \frac{1}{n} R_n = 0$, which concludes the proof.
\end{proof}
Next, we focus on establishing various bounds for the maximal Lyapunov exponent for the resulting product of $2 \times 2$ matrices.
\begin{prop}\label{prop:bounds}
  Let $(I_n)$ be a stationary and ergodic Markov chain with values in $\Om' = \{1,2\}$ as given in Definition~\ref{def:environment_process}, and $A\!: \Om' \times \Om' \to [0, \infty)^{2 \times 2}$ such that $\mean[|\log\|A(I_0, I_1)\||] < \infty$.  Then the following statements hold:
  \begin{enumerate}[(a)]
  \item For $\la\!: \Om' \times \Om' \to (0, \infty)$ set $A^*_{n, n+1} \equiv A^*(I_n, I_{n+1}) \ldef A(I_n, I_{n+1}) / \la(I_n, I_{n+1})$.  Then, $\prob$-a.s. and in mean,
    \begin{align*}
      \lim_{n \to \infty}
      \frac{1}{n} \log \big\|A_{1,2} \cdot \ldots \cdot A_{n-1,n}\big\|
      \;\leq\;
      \mean[\log \la(I_0,I_1)] \,+\, \log \vr(\widehat{A}^*),
    \end{align*}
    where $\vr(\widehat{A}^*)$ denotes the spectral radius of the $(4 \times 4)$-matrix $\widehat{A}^*$ which is given by
    \begin{align*}
      \widehat{A}^*
      \;\ldef\;
      \begin{pmatrix}
        (1-s_1) A^*(1,1) & s_1 A^*(1,2) \\
        s_2 A^*(2,1) & (1-s_2) A^*(2,2)
      \end{pmatrix}.
    \end{align*}
  \item For $n\geq1$ denote by $\mathcal D_n$ the set of probability density functions on $\{1,2\}^n$. Then, $\prob$-a.s. and in mean,
    \begin{align*}
      \lim_{n \to \infty}
      \frac{1}{n} \log \big\|A_{1,2} \cdot \ldots \cdot A_{n-1,n}\big\|
      \;\geq\;
      \limsup_{n\to\infty}\sup_{\nu \in \cD_n} \frac{1}{n}
      \bigg(
        \sum_{k=1}^{n-1} \bfE_\nu[X_k] + H(\nu)
      \bigg),
    \end{align*}
    where $X_k = \log A(I_k,I_{k+1})^{\al_k,\al_{k+1}}$, $\bfE_\nu$ denotes integration by $\al\in\{1,2\}^n$ with respect to $\nu$ and
    \begin{align*}
      H(\nu)
      \;=\;
      -\sum_{\al \in \{1,2\}^n} \nu(\al)\log\nu(\al)
    \end{align*}
    the entropy of $\nu$.
  \end{enumerate}
\end{prop}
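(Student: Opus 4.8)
\noindent\emph{Proof strategy.}
Throughout I would work with the entry-sum norm $\|B\| = \sum_{a,b} |B^{a,b}|$, which for a non-negative matrix satisfies $\|B\| = \mathbf 1^\top B\,\mathbf 1$ with $\mathbf 1 = (1,1)^\top$; this costs nothing since $\vp$ is norm-independent and, exactly as in the proof of Lemma~\ref{lem:lyap-exp-rank2}, the hypothesis $\mean[|\log\|A(I_0,I_1)\||] < \infty$ already guarantees via Kingman's theorem that $\vp := \lim_n \tfrac1n \log\|A_{1,2}\cdot\ldots\cdot A_{n-1,n}\|$ exists $\prob$-a.s.\ and in mean, with $\vp = \lim_n \tfrac1n \mean[\log\|A_{1,2}\cdot\ldots\cdot A_{n-1,n}\|]$. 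For part~(a) I would first factor $A_{k,k+1} = \la(I_k,I_{k+1})\,A^*_{k,k+1}$, so that
\[
  \tfrac1n \log\|A_{1,2}\cdot\ldots\cdot A_{n-1,n}\|
  \;=\;
  \tfrac1n \sum_{k=1}^{n-1} \log\la(I_k,I_{k+1})
  \;+\; \tfrac1n \log\|A^*_{1,2}\cdot\ldots\cdot A^*_{n-1,n}\|,
\]
and handle the first sum by Birkhoff's theorem ($\log\la$ being bounded on the finite set $\{1,2\}^2$), which reduces everything to the bound $\vp^* := \lim_n \tfrac1n \log\|A^*_{1,2}\cdot\ldots\cdot A^*_{n-1,n}\| \le \log\vr(\widehat A^*)$.

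The key step in part~(a) is a block-matrix bookkeeping. Since the $(i,j)$-block of $\widehat A^*$ is $P_I(i,j)\,A^*(i,j)$, iterating block multiplication shows that the $(i,j)$-block of $(\widehat A^*)^{n-1}$ equals $\sum_{i_2,\dots,i_{n-1}} \big( \prod_{k=1}^{n-1} P_I(i_k,i_{k+1}) \big)\, A^*(i,i_2)\cdots A^*(i_{n-1},j)$; averaging against the stationary law of $(I_n)$ and contracting with $\mathbf 1$ on both sides therefore gives
\[
  \mean\big[\|A^*_{1,2}\cdot\ldots\cdot A^*_{n-1,n}\|\big]
  \;=\; \sum_{i,j\in\{1,2\}} \pi_I(i)\, \mathbf 1^\top \big[(\widehat A^*)^{n-1}\big]_{(i,j)}\, \mathbf 1
  \;\le\; \big\|(\widehat A^*)^{n-1}\big\|,
\]
using $\pi_I(i)\le 1$. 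Combining this with Jensen's inequality $\mean[\log(\cdot)] \le \log\mean[\cdot]$, Kingman's identity $\vp^* = \lim_n \tfrac1n \mean[\log\|A^*_{1,2}\cdot\ldots\cdot A^*_{n-1,n}\|]$, and Gelfand's formula $\tfrac1n \log\|(\widehat A^*)^{n-1}\| \to \log\vr(\widehat A^*)$ yields $\vp^* \le \log\vr(\widehat A^*)$ (trivially so when $\vp^* = -\infty$), which together with the Birkhoff step proves~(a).

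For part~(b) I would exploit that, with the entry-sum norm,
\[
  \|A_{1,2}\cdot\ldots\cdot A_{n-1,n}\|
  \;=\; \sum_{\al\in\{1,2\}^n} \prod_{k=1}^{n-1} A(I_k,I_{k+1})^{\al_k,\al_{k+1}}
  \;=\; \sum_{\al\in\{1,2\}^n} \exp\Big( \sum_{k=1}^{n-1} X_k \Big)
\]
is a ``partition function'' in the path variable $\al$. The Gibbs variational principle --- equivalently, non-negativity of the relative entropy of $\nu$ with respect to the Gibbs measure $\al\mapsto \exp(\sum_k X_k)/\|A_{1,2}\cdot\ldots\cdot A_{n-1,n}\|$ --- then gives, for every $\nu\in\cD_n$,
\[
  \log\|A_{1,2}\cdot\ldots\cdot A_{n-1,n}\|
  \;\ge\; \sum_{k=1}^{n-1} \bfE_\nu[X_k] \;+\; H(\nu),
\]
with the convention that this is vacuous when $\sum_k \bfE_\nu[X_k] = -\infty$ (the degenerate cases in which every path carries a vanishing factor). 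Dividing by $n$, taking $\sup_{\nu\in\cD_n}$ and then $\limsup_{n\to\infty}$, and using that the left-hand side converges to $\vp$, produces the claimed inequality.

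The main obstacle is in part~(a): turning an in-expectation estimate for a Markov-modulated matrix product into an \emph{almost sure} upper bound for $\vp^*$. The crux is that Kingman's theorem identifies $\vp^*$ with $\lim_n \tfrac1n\mean[\log\|\cdot\|]$, so a single application of Jensen converts the clean deterministic estimate $\mean[\|A^*_{1,2}\cdot\ldots\cdot A^*_{n-1,n}\|]\le\|(\widehat A^*)^{n-1}\|$ --- where all the genuine content lies --- into the desired a.s.\ bound without any delicate concentration argument. Beyond that, I would only need mild care with the degenerate situations in which $\vr(\widehat A^*) = 0$ in~(a) or the partition function vanishes in~(b), where the relevant exponent is $-\infty$ and the asserted inequalities hold trivially; part~(b) is otherwise routine once the partition-function structure is recognised.
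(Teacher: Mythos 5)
Your proposal is correct and follows essentially the same route as the paper's proof: for (a), factor out $\la$ via Birkhoff, apply Jensen to reduce to $\mean[\|A^*_{1,2}\cdots A^*_{n-1,n}\|] = (\pi_I\otimes\mathbf 1_2)^\top(\widehat A^*)^{n-1}\mathbf 1_4 \le \|(\widehat A^*)^{n-1}\|$ and conclude with Gelfand's formula; for (b), the Gibbs variational principle you invoke is exactly the paper's step of writing the partition function as $\sum_{\al:\nu(\al)>0}\nu(\al)\exp\bigl(\sum_k X_k - \log\nu(\al)\bigr)$ and applying Jensen. The only cosmetic difference is that you spell out the block-matrix identity and use $\pi_I(i)\le 1$ where the paper sandwiches the quantity between $\log\min_i\pi_I(i)+\log\|(\widehat A^*)^{n-1}\|$ and $\log\|(\widehat A^*)^{n-1}\|$.
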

\begin{proof}
  \emph{(a)} First, by the ergodic theorem, we have that, $\prob$-a.s. and in mean, 
  \begin{align*}
    &\lim_{n \to \infty}
    \frac{1}{n} \log \big\|A_{1,2} \cdot \ldots \cdot A_{n-1,n}\big\|
    \\[.5ex]
    &\mspace{36mu}=\;
    \mean\!\big[\log \la(I_0, I_1)\big]
    \,+\,
    \lim_{n \to \infty}
    \frac{1}{n}
    \mean\!\Big[\log \big\|A^*_{1,2} \cdot \ldots \cdot A^*_{n-1,n}\big\|\Big].
  \end{align*}
  Moreover, it is well known that an upper bound for the maximal Lyapunov exponent of the stationary and ergodic sequence $(A^*_{1,2}, A^*_{2,3}, \ldots)$ follows immediately from Jensen's inequality.  Indeed,
  \begin{align*}
    \mean\!\Big[\log \big\|A^*_{1,2} \cdot \ldots \cdot A^*_{n-1,n}\big\|\Big]  
    &\;\leq\;
    \log  \mean\!\Big[\big\|A^*_{1,2} \cdot \ldots \cdot A^*_{n-1,n}\big\|\Big]
    \\[.5ex]
    &\;=\;  
    \log (\pi_I \otimes \mathbf{1}_2)^\top (\widehat{A}^*)^{n-1} (\mathbf{1}_4),
  \end{align*}
  where $\mathbf{1}_k \ldef (1, \ldots, 1) \in \bbR^k$.  Since
  \begin{align*}
    \log \min_{i \in \{1,2\}} \pi_I(i) + \log \|(\widehat{A}^*)^{n-1}\|
    \;\leq\;
    \log (\pi_I \otimes \mathbf{1}_2)^\top (\widehat{A}^*)^{n-1} (\mathbf{1}_4)
    \;\leq\;
    \log \|(\widehat{A}^*)^{n-1}\|,
  \end{align*}
  where we used the matrix norm $\|B\| = \sum_{i,j} |B^{i,j}|$, $B \in \bbR^{2 \times 2}$, the assertion follows from \cite[Corollary~5.6.14]{HJ90}. 

  \emph{(b)} Note that, for any $\nu \in \cD_n$,
  \begin{align*}
    \|A_{1,2} \cdot \ldots \cdot A_{n-1,n}\|
    &\;=\;
    \sum_{\al \in \{1,2\}^n} \prod_{k=1}^{n-1} A_{k, k+1}^{\al_k, \al_{k+1}}
    \\
    &\;\geq\;
    \sum_{\al:\,\nu(\al)>0}
    \nu(\al)
    \exp\!\bigg(
      \sum_{k=1}^{n-1}\log A_{k,k+1}^{\al_k,\al_{k+1}} - \log\nu(\al)
    \bigg)
  \end{align*}
  The result follows from Jensen's inequality, taking supremum and limit superior. Note that the right-hand side converges almost surely and hence in mean by monotone convergence.
\end{proof}
\begin{proof}[Proof of Theorem~\ref{thm:dirac-lower-bound}]
  Since $w$ and $d$ do not depend on $e$, it holds $\det M(2) = \al \det M(1)$. Notably, when $\det M(1)=0$, this lower bound equates to the result from Theorem~\ref{thm:stochastic-fitness}. Hence, in what follows we assume $\det M(1)\neq0$.

  For $\det M(1) > 0$, using representation \eqref{eq:vec-decom-pos-det} we obtain
  \begin{align*}
    A(i,j)
    \;=\;
    \begin{pmatrix}
      \al^{j-1}\mAct+\frac{w\mDor}{\mAct} & \frac{\mDor}{\mAct}
      \\[.5ex]
      \frac{w\det M(1)}{\mAct} & \frac{\det M(1)}{\mAct}
    \end{pmatrix},
  \end{align*}
  where $A(i,j)_{1,1}=\tr M(j)-\frac{\det M(1)}{\mAct}$.
  On the other hand, if $\det M(1)<0$ and we use \eqref{eq:vec-decom-neg-det},
  \begin{align*}
    A(i,j)
     &= \begin{pmatrix}
          \tr M(j) & 1\\
          -\det M(j) & 0
        \end{pmatrix}.
  \end{align*}
  Hence, in both cases it holds $A(i,j)_{1,1}=\tr M(j)-(\det M(1)/\mAct)^+$ and Proposition~\ref{prop:bounds}\emph{(b)} concludes the proof by considering $\nu_n=\delta_{\{1\}^n}\in\mathcal D_n$ and applying the Ergodic Theorem.
\end{proof}
\begin{remark}[Further bounds on the maximal Lyapunov exponent]\label{rem:further-bounds}
  Let us consider the stochastic switching model with mean matrices $M(1)$ and $M(2)$.
  \begin{enumerate}
    \item If $w^1 \leq w^2$ and $w^1+d^1 \leq w^2+d^2$, then $M(2)^{i,j} \leq M(1)^{i,j}$ for any $i,j \in \{1,2\}$. In particular, $\|M(2)^n\| \leq \|M_1 \cdot \ldots \cdot M_n\| \leq \|M(1)^n\|$.  Thus, by \cite[Corollary~5.6.14]{HJ90}, we obtain that, $\prob$-a.s. and in mean,
    \begin{align*}
      \ln \vr(M(2))
      \;\leq\;
      \vp_Z
      \;\leq\;
      \ln \vr(M(1)).
    \end{align*}
    Note that such kind of worst-case/best-case estimate has also been obtained in \cite[Proposition~10]{MS08}.  In particular, this bound does not take into account the lengths of the environmental phases given by $s_e^{-1}$ and hence cannot capture the effects illustrated in Remark~\ref{rem:interpr-advs}.
  \item In view of Remark~\ref{rem:simple:bounds} any sub-multiplicative function $\|\cdot\|\!: \bbR_{\geq}^{2 \times 2} \to (0, \infty)$ yields that $\vp_Z \leq \mean[\log\|M(I_0)\|]$.  Likewise, for any super-multiplicative function $f\!: \bbR_{\geq}^{2 \times 2} \to (0, \infty)$ we obtain that $\vp_Z \geq \mean[\log f(M(I_0))]$.  Examples of super-multiplicative functions are the minimal column and row sums, respectively, any diagonal element, or the permanent of a matrix $A$.

    For an slightly improved upper bound note that for any sub-multiplicative matrix norm $\|\cdot\|$
    \begin{align*}
      \Big\|\prod_{k=1}^nM_k\Big\|
      \;\leq\;
      \prod_{k=1}^n\|M_k\| \cdot
      \prod_{k=1}^{n-1}\Big(
        \frac{\|M(1)M(2)\|}{\|M(1)\|\|M(2)\|}
      \Big)^{\indicator_{I_k=1,I_{k+1}=2}},
    \end{align*}
    which takes into account the effects of one type of environmental change.   Hence, we obtain that $\vp_Z \leq \mean[\log\|M(I_0)\|] + \Psi$, where
    \begin{align*}
      \Psi
      \;=\;
      \frac{s_1 s_2}{s_1+s_2}
      \log\Big(
        \frac{\min\{\|M(1)M(2)\|,\|M(2)M(1)\|\}}{\|M(1)\|\|M(2)\|}
      \Big)
      \;\leq\;
      0.
    \end{align*}
    As one can see in Figure~\ref{fig:bounds}, for small $\al$ in some cases this can give a better upper bound than the one from \cite{HL16}.
  \end{enumerate}
\end{remark}
A more evolved approach is to choose a sequence $(\nu_n)$ such that $(\al_k)_k$ can be interpreted as path of a Markov chain. Combining this ansatz with Markov chain limit results leads to

\begin{corro}\label{cor:lower-bound-Markov-chain}
  For $i,j,y\in\{1,2\}$ let $\mu_{ijy}\in[0,1]$, such that the stochastic matrix
  $Q$ defined as
  \begin{align*}
   \bordermatrix{
     ~ & 11               & 12                   & 21               & 22 \cr
    11 & (1-s_1)\mu_{111} & (1-s_1)(1-\mu_{111}) & s_1\mu_{121} & s_1(1-\mu_{121}) \cr
    12 & (1-s_1)\mu_{112} & (1-s_1)(1-\mu_{112}) & s_1\mu_{122} & s_1(1-\mu_{122}) \cr
    21 & s_2\mu_{211} & s_2(1-\mu_{211}) & (1-s_2)\mu_{221} & (1-s_2)(1-\mu_{221}) \cr
    22 & s_2\mu_{212} & s_2(1-\mu_{212}) & (1-s_2)\mu_{222} & (1-s_2)(1-\mu_{222}) \cr
     }
  \end{align*}
  is irreducible and aperiodic, and denote by $q$ its stationary distribution.  Then,
  \begin{align*}
    \varphi_Z
    \;\geq\;
    \sum_{i,j,y,z\in\{1,2\}} q_{iy} Q^{iy,jz}
    \Big(\log A(i,j)^{y,z} + h(\mu_{ijy})\Big),
  \end{align*}
  where $h(x)=0$ if $x\in\{0,1\}$ and $h(x)=-x\log x-(1-x)\log(1-x)$ otherwise.
\end{corro}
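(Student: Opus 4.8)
The plan is to run the variational lower bound of Proposition~\ref{prop:bounds}\,(b) against a cleverly chosen sequence of densities $(\nu_n)$, and then to evaluate the resulting Cesàro averages by means of the ergodic theorem for finite Markov chains. I would first dispose of the degenerate case: if $A(i,j)^{y,z}=0$ for some transition with $Q^{iy,jz}>0$, then the right-hand side of the asserted inequality is $-\infty$ and there is nothing to prove, so henceforth all such entries may be assumed positive. Next, given a realization of the environment $(I_m)_{m\ge1}$, introduce an auxiliary $\{1,2\}$-valued chain $(\alpha_k)_{k\ge1}$ with $\alpha_1$ chosen deterministically, whose transition, conditionally on $I_k=i$, $I_{k+1}=j$, $\alpha_k=y$, sends $\alpha_{k+1}$ to $1$ with probability $\mu_{ijy}$ and to $2$ with probability $1-\mu_{ijy}$. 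A short computation shows that the joint process $W_k\ldef(I_k,\alpha_k)$ is a time-homogeneous Markov chain on $\{1,2\}^2$ whose transition matrix is exactly the matrix $Q$ of the statement (its $I$-marginal being $P_I$, consistently with Definition~\ref{def:environment_process}). Since $Q$ is by hypothesis irreducible and aperiodic, $(W_k)$ — and with it the pair chain $(W_k,W_{k+1})$ — is an ergodic finite Markov chain, with stationary laws $q$ and $q_{iy}Q^{iy,jz}$ respectively.

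For each $n$ I would take $\nu_n\in\cD_n$ to be the (environment-measurable) conditional law of $(\alpha_1,\dots,\alpha_n)$ given $(I_1,\dots,I_n)$ and feed it into Proposition~\ref{prop:bounds}\,(b). Because $\nu_n$ is the law of a time-inhomogeneous Markov chain, the chain rule for entropy yields $H(\nu_n)=H(\alpha_1)+\sum_{k=1}^{n-1}\bfE_{\nu_n}\big[h(\mu_{I_kI_{k+1}\alpha_k})\big]$, the conditional entropy of $\alpha_{k+1}$ given $\alpha_k=y$ being precisely the binary entropy $h(\mu_{I_kI_{k+1}y})$. Writing $F\big((i,y),(j,z)\big)\ldef\log A(i,j)^{y,z}+h(\mu_{ijy})$, one has $\bfE_{\nu_n}[X_k]+\bfE_{\nu_n}[h(\mu_{I_kI_{k+1}\alpha_k})]=\mean\big[F(W_k,W_{k+1})\mid(I_m)_m\big]$. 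Combining this with Lemma~\ref{lem:lyap-exp-rank2} (which identifies $\vp_Z$ with the Lyapunov exponent of the product of the $A$'s) and Proposition~\ref{prop:bounds}\,(b), and discarding the $O(1/n)$ contribution of $H(\alpha_1)$, gives $\prob$-a.s.
\begin{align*}
  \vp_Z
  \;\ge\;
  \limsup_{n\to\infty}\;
  \mean\!\bigg[\frac1n\sum_{k=1}^{n-1}F(W_k,W_{k+1})\,\Big|\,(I_m)_m\bigg].
\end{align*}

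It then remains to pass to the limit. On the transitions reachable under $Q$ the function $F$ is bounded (the $h$-part lies in $[0,\log 2]$, and after the reduction above the $\log A$-part is finite), so the ergodic theorem for the pair chain $(W_k,W_{k+1})$ gives, $\prob$-a.s.\ and in $L^1$ with respect to the joint law of environment and tilting,
\begin{align*}
  \frac1n\sum_{k=1}^{n-1}F(W_k,W_{k+1})
  \;\longrightarrow\;
  c
  \;\ldef\;
  \sum_{i,j,y,z\in\{1,2\}} q_{iy}\,Q^{iy,jz}\Big(\log A(i,j)^{y,z}+h(\mu_{ijy})\Big),
\end{align*}
which is exactly the claimed bound. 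Since $L^1$ convergence of $\tfrac1n\sum_k F(W_k,W_{k+1})$ to the constant $c$ forces the conditional expectations $\mean[\,\cdot\mid(I_m)_m]$ to converge to $c$ in $L^1$, hence $\prob$-a.s.\ along a subsequence, and since a $\limsup$ dominates every subsequential limit, the right-hand side of the last display is $\ge c$; therefore $\vp_Z\ge c$, as claimed. (Conditional Fatou, which only uses that $F$ is bounded below, gives the same conclusion.)

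The step I expect to be the main obstacle is precisely this last interchange of $\limsup$, conditional expectation and the pathwise Cesàro limit: Proposition~\ref{prop:bounds}\,(b) only furnishes a bound in terms of $\bfE_{\nu_n}[X_k]$ — an integral over the tilting variable $\alpha$, i.e.\ a conditional expectation given the environment — whereas the ergodic theorem describes the \emph{pathwise} average of $F(W_k,W_{k+1})$ under the joint law; bridging the two requires the uniform integrability (here just boundedness) of $F$, together with some care about the $\log 0$ degeneracy. By contrast, verifying that $(I_k,\alpha_k)$ has transition matrix exactly $Q$ and that the Markov-chain entropy identity takes the stated form is routine bookkeeping.
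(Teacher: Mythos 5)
Your proof is correct and follows essentially the same route as the paper's: you construct the auxiliary $\{1,2\}$-valued tilting chain driven by the environment so that the joint chain has transition matrix $Q$, feed its conditional law given the environment into Proposition~\ref{prop:bounds}\,(b), compute the entropy via the chain rule, and pass to the limit with the ergodic theorem for the pair chain. The only differences are cosmetic (a deterministic rather than stationary initialization of the tilting chain), and you are in fact somewhat more careful than the paper about the $\log 0$ degeneracy and the interchange of the conditional expectation with the ergodic limit.
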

Let us point out that this result can also be deduced directly from \cite[Theorem~4.3]{ADG94}, where the authors, based on concepts from equilibrium statistical mechanics, established a variational characterization of the maximal Lyapunov exponent for general ergodic sequences of positive matrices statisfying certain integrability conditions.  Nevertheless, for the sake of being self-contained we provide a proof of Corollary~\ref{cor:lower-bound-Markov-chain} at the end of this section.  A similar upper bound has been derived by Gharavi and Anantharam \cite{GA05}.

Note that Corollary~\ref{cor:lower-bound-Markov-chain} in this special case offers an analytical approach for finding an over-all reliable lower bound by adjusting the eight $\mu$-parameters.  Additionally, it provides a way to give approximate uniform lower bounds (e.g.\ in $\al$, see Figure~\ref{fig:bounds} left and mid).
\begin{remark}[Connection to \cite{HL16} and \cite{KL05}]\label{rem:connection-to-HL16}\
  \begin{enumerate}
  \item Using equation \eqref{eq:vec-decom-row} for both mean matrices of the stochastic switcher yields $A(i,j)=M(i)$.  Hence, letting $\la(e,\cdot) = \vr(e)$, Proposition~\ref{prop:bounds}-(a) gives the same upper bound as in \cite[Theorem~2]{HL16}.  Changing the values of $\la$ allows to influence the loss from the estimation by Jensen's inequality.  Hence, the freedom to choose $\la$ offers potential improvement for this upper bound.
  \item Analogous to Corollary~\ref{cor:lower-bound-Markov-chain} the method in \cite{HL16} is based on constructing $\nu_n$ via transition matrices of the form
    \begin{align*}
      \Th(e)
      \;\ldef\;
      \diag(v(e))^{-1} \cdot \frac{M(e)}{\vr(e)} \cdot \diag(v(e)),
    \end{align*}
    where $v(e)$ denotes the respective and suitably normalized right-eigenvec\-tors of $M(e)$.  In fact, the lower bound in \cite[Theorem~3]{HL16} can be achieved from Corollary~\ref{cor:lower-bound-Markov-chain} by choosing $\mu_{ijy} = \Th(i)_{y1}$ and, as above, $A(i,j)=M(i)$ by decomposition \eqref{eq:vec-decom-row}.  Then,
    \begin{align*}
      Q
      \;=\;
      \begin{pmatrix}
        \Th(1) & 0 \\
        0 & \Th(2)
      \end{pmatrix}
      \cdot
      \bigg( P_I \otimes \begin{pmatrix} 1 & 0 \\ 0 & 1 \end{pmatrix} \bigg)
    \end{align*}
    as well as
    \begin{align*}
      &\sum_{j,z \in \{1,2\}}
      Q^{iy,jz}\Big(\log A(i,j)^{y,z} + h(\mu_{ijy}) \Big)
      \\[.5ex]
      &\mspace{36mu}=\;
      \sum_{j=1}^2 P_I^{ij}
      \Big(
        \Th(i)^{y1} \log M(i)^{y1} + \Th(i)^{y2}\log M(i)^{y2}
        + h(\Th(i)^{y1})
      \Big)
      \\[.5ex]
      &\mspace{36mu}=\;
      \log \vr(i) + \log v(i)_y - \Th(i)^{y1} \log v(i)_1
      - \Th(i)^{y2}\log v(i)_2,
    \end{align*}
    such that the lower bound in Corollary~\ref{cor:lower-bound-Markov-chain} becomes
    \begin{align}
      \mean[\log\vr(0)]
      + q
      \bigg(
        \mathbb{I}_4 - \begin{pmatrix} \Th(1) & 0 \\ 0 & \Th(2) \end{pmatrix}
      \bigg)
      \begin{pmatrix}
        \log v(1)_1 \\ \log v(1)_2 \\ \log v(2)_1 \\ \log v(2)_2
      \end{pmatrix},\label{eq:HL-lower-bound}
    \end{align}
    which illustrates the connection. Figure~\ref{fig:bounds} demonstrates that there are choices for the parameters $\mu$ that can be made to improve the lower bound from \cite{HL16}, especially for small $\al$.  This particular choice of transition matrices $\Theta(e)$ defines a Markov chain $(Y_k)$ closely related to the so-called \emph{retrospective process} (cf.\ \cite[Chapter~3]{Wa14}).
  \item Kussel and Leibler \cite{KL05} approximate the maximal Lyapunov exponent under a \emph{slow environment condition}.  The soundness of this approximation can be verified by the above discussed bounds of \cite{HL16}: As $s_1, s_2 \to 0$ and $s_1/s_2 \to \tau>0$, $P_I$ gets close to $\mathbb{I}_2$ and hence, the second addend in \eqref{eq:HL-lower-bound} approaches $q(\mathbb{I}_4-Q)\log v=0$.  On the other hand, the $\widehat{A}^*$-matrix obtained in Remark~\ref{rem:connection-to-HL16}(1) tends to
    \begin{align*}
      \begin{pmatrix}
        M(1)/\vr(1) & 0 \\ 0 & M(2)/\vr(2)
      \end{pmatrix},
    \end{align*}
    such that $\vr(\widehat A^*) \to 1$.  Thus, both bounds approach $\mean[\log \vr(0)]$ and so does $\vp_Z$.
  \end{enumerate}
\end{remark}
\begin{figure}
  \includegraphics{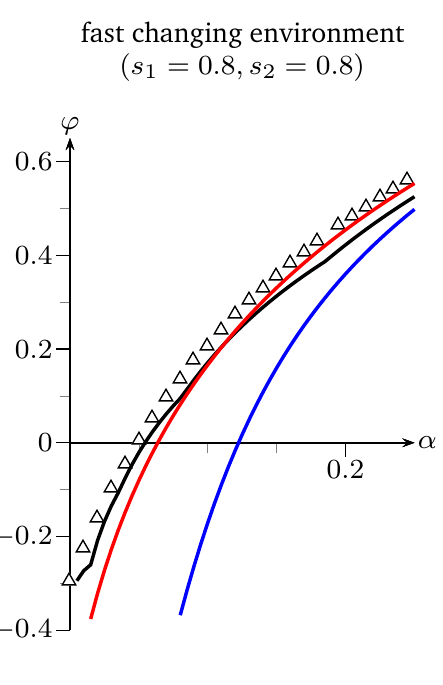}
  \hspace{0mm}
  \includegraphics{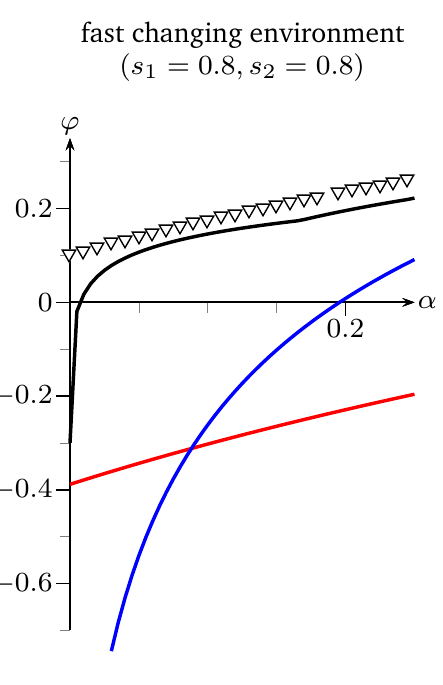}
  \hspace{0mm}
  \includegraphics{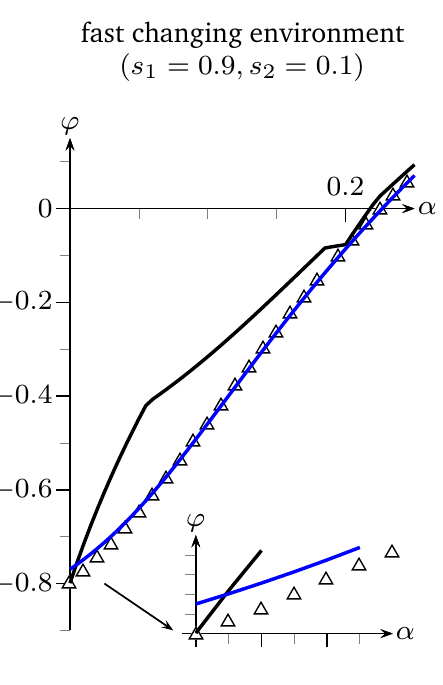}
  \caption{\label{fig:bounds}
    Left and mid: Comparison of lower bounds of $\De$- and $\nabla$-strategy resp.\ in setting of Figure~\ref{fig:non-zero-det} (right) -- red: Theorem~\ref{thm:dirac-lower-bound}, black: Corollary~\ref{cor:lower-bound-Markov-chain} maximized over 1000 random choices of the $\mu$-parameters,  blue: \cite[Theorem~3]{HL16}, $\De$ and $\nabla$: approximation via simulation.
    On the right: Comparison of upper bounds -- 
    black: improved norm bound from Remark~\ref{rem:further-bounds}-(2) with respect to $\|\cdot\|_1$, blue: \cite[Theorem~2]{HL16}, $\De$: approximation via simulation  -- each again as functions of $\al$ via fair comparison.
  }
\end{figure}

\begin{proof}[Proof of Corollary~\ref{cor:lower-bound-Markov-chain}]
  Denote by $\ga_i = \frac{q_{i1}}{q_{11}+q_{12}}$ and by $(Y_k)_{k \geq 1}$ random variables on $\{1,2\}$ holding $\prob[Y_1=1 \mid I_1] = \ga_{I_1}$ and
  \begin{align*}
    \prob[Y_{n+1} = 1 \mid Y_n, I_n, I_{n+1}] \;=\; \mu_{I_n,I_{n+1},Y_n}
  \end{align*}
  for all $n \geq 1$.  Then, $((I_n,Y_n))_n$ is a time-homogeneous stationary Markov chain with transition matrix $Q$.  Furthermore, $(\bbY_n)_n$ with $\mathbb{Y}_n \ldef ((I_n,Y_n),(I_{n+1},Y_{n+1}))$ is a homogeneous Markov chain with stationary distribution $q^{(2)}$ given by
  \begin{align*}
    q^{(2)}_{ab} \;=\; q_a\cdot Q^{ab} \;=\; (\diag(q)\cdot Q)^{ab}
  \end{align*}
  for $a, b \in \{11, 12, 21, 22\}$.  Now, $X_n = f(\bbY_n)$ with $f(i,y,j,z) = \log A(i,j)_{y,z}$.  Letting $\nu_n^I$ the distribution of $(Y_k)_{1 \leq k \leq n}$ conditional on $(I_k)$, it follows by stationarity of the $\bbY_k$ and ergodicity that
  \begin{align*}
    \frac{1}{n} \sum_{k=1}^{n-1}\bfE_{\nu_n^I}[X_k]
    \;=\;
    \frac1n\sum_{k=1}^{n-1}\mean[f(\mathbb Y_k)\mid I_k,I_{k+1}]
    \;\xrightarrow{n \to \infty}\;
    \mean_{q^{(2)}}\!\big[ f(I_1, Y_1, I_2, Y_2)\big]
  \end{align*}
  amounting to the first addend of the lower bound. On the other hand, using the Markov property,
  \begin{align*}
    \frac{1}{n} H(\nu_n^I)
    &\;\ldef\;
    -\frac{1}{n} \sum_{\al \in \{0,1\}^n} \nu_n^I(\al) \log \nu_n^I(\al)
    \;=\;
    -\frac{1}{n}
    \mean\!\big[\log\nu_n^I(Y_1, \ldots, Y_n) \mid(I_k)_{k \leq n}\big]
    \\[.5ex]
    &\;=\;
    \frac{1}{n}
    \sum_{k=1}^{n-1} \mean\!\big[ h(\mu_{I_kI_{k+1}Y_k}) \mid I_k,I_{k+1} \big]
    + O(1/n).
  \end{align*}
  Hence, by stationarity of $(\bbY_k)$ and the Ergodic Theorem it follows
  \begin{align*}
    \frac{1}{n} H(\nu_n^I)
    \;\xrightarrow{n\to\infty}\;
    \mean_{q^{(2)}}\!\big[h(\mu_{I_1I_2Y_1})\big]
  \end{align*}
  and the corollary holds by Proposition~\ref{prop:bounds}-\emph{(b)}.
\end{proof}

\section{Discussion and outlook}
\label{sec:discussion}
\subsection{Discussion} 
In the previous sections, we investigated 2-type branching process models in random environments that are tailored to the modeling of populations with seed banks comprised of dormant individuals. These models incorporate different switching strategies between active and dormant states (including responsive and spontaneous switching, and mixtures of these), type-specific  relative reproductive costs, and stationary and ergodic environments switching randomly between healthy and harsh states of variable severity. They are in a tradition of earlier (deterministic and stochastic) models from population biology incorporating switches, in particular those of Malik and Smith \cite{MS08} for dormancy described by deterministic dynamical systems, and of Dombry, Mazza and Bansaye \cite{DMB11}, describing phenotypic switches by branching processes in random environments.  We will now discuss our models and results, including their distinctive features and novelties, from a somewhat elevated perspective,  and try to put them in the context of this earlier work, thus contributing to the conceptual understanding of how switching strategies, distributional properties of environments, reproductive costs and stochastic vs.\ deterministic modeling affect the (potential) fitness benefits of dormancy traits in unstable environmental conditions.

\subsection*{`Take-home messages' and relation to the `rules of thumb' of Malik and Smith}
In \cite{MS08}, the authors provide rigorous results in a deterministic dynamical-systems based set-up. Although their modeling approach is different from ours, and they derive most of their results for deterministic and periodically switching environments, it is still instructive to compare them, since there are important similarities, but also distinctions and  novelties obtained in our framework. In their discussion section, Malik and Smith provide several `rules of thumb' summarizing their findings. We try to formulate a corresponding set of such rules; however, one should be careful with these necessarily vague statements - in doubt one always needs to come back to the exact mathematical results, or carry out additional simulations to cover the concrete scenario in question. Note that the following statements refer to our `fair comparison' assumption ($\ga=1$).

\begin{quote}
   \emph{(1)\hspace{1em} Each switching strategy is 
   more fit than the `sleepless' population when i) good times are rare and ii) bad times are sufficiently harsh. Otherwise, the sleepless case has a (potentially strong) fitness advantage.}
\end{quote}
This was essentially also observed in \cite{MS08}. Here, i)  corresponds in our setting to the condition $s_2\ll s_1$. In this case indeed we see that $\vp_X$ is small -- cf.\ Figure~\ref{fig:phase_diagram}. However, one should  note that this observation also depends on the severity $\al$ of the `harsh' environmental state, as we observe that $\vp_X$ will eventually always dominate when $\al$ increases to 1, thus explaining the additional condition ii).  In fact, we can strengthen this rule by adding that both the responsive or the stochastic switcher, under suitable reproductive parameters and under fair comparison, can even be exclusively super-critical (``strong fitness advantage''), if the ratio between good times and bad times is sufficiently balanced, cf.\ the colored areas of Figure~\ref{fig:phase_diagram}. From now on, we will always assume a sufficiently severe harsh environment (that is, $\al$ is sufficiently small.

Note that for the limit  $\al \downarrow 0$ the horizontal line in Fig.~\ref{fig:phase_diagram} tends to a horizontal line through the point $1$, the slope of the linear function through the origin tends to $\infty$, and the separatrix between the responsive and spontaneous switcher tends to the function $[0,1]  \ni s_1 \mapsto 1-\mathbbm{1}_{(0,1]}(s_1)$. In this case, the stochastic switcher  completely dominates the diagram, but has no strong fitness advantage.

\begin{quote}
  \emph{(2)\hspace{1em} The responsive switcher is more fit than the stochastic switcher when environmental states change rarely, i.e. when $s_1\cdot s_2$ is small.}
\end{quote}
Note that a similar rule has been stated in \cite{KL05}.
The corresponding rule in \cite{MS08} is that the ``responsive switcher is more fit than the stochastic switcher when either good times are very rare or are very common''. In our case, this would correspond to  the condition that either $s_1 \ll s_2$ or $s_2 \ll s_1$, suggesting that the point $(s_1,s_2)$ lies below the graph of a suitable `hyperbola'.  Indeed, we are able to compute the exact boundary of the region where $\vp_{\mathrm{res}}>\vp_{\mathrm{sto}}$, which is given by \eqref{eq:separatrix}, see Figure~\ref{fig:phase_diagram} and Remark \ref{rem:weighted-fair-comp} above. Hence we are able to provide  a very explicit and exact classification of fitness advantage areas. However, we also see that $\vp_{\mathrm{res}}>\vp_{\mathrm{sto}}$ when environmental states \emph{both} change rarely, i.e.\ also if $s_1$ and $s_2$ are small. Since \cite{MS08} only consider environmental cycles of fixed length $T\equiv s_1^{-1}+s_2^{-1}$, they cannot observe this effect. In contrast, \cite{MS08} also provide results for the limit of extremely quickly fluctuating environments, which are meaningless in our model, since we assume discrete time/generations.

\begin{quote}
  \emph{(3)\hspace{1em} When the environment changes almost every generation, i.e. when $s_1\cdot s_2$ is large, the prescient switcher is the fittest.}
\end{quote}
The prescient switcher -- to the authors' knowledge -- does not appear in the biological literature and is not discussed in \cite{MS08}. A possible reason for this might be that, in comparison to the responsive strategy, the prescient switcher seems counter-intuitive. However, when the randomness of the environment is negligible and the environment becomes predictable, this strategy is dominant as Figure~\ref{fig:phase_diagram} demonstrates. Annual plants serve as a suitable example.

\begin{quote}
  \emph{(4)\hspace{1em} For intermediate values of $s_1\cdot s_2$, stochastic switchers emerge as dominant strategies.}
\end{quote}
As discussed in Remark~\ref{rem:convex-combination}, stochastic switchers can be constructed via convex combinations of fairly compared responsive and prescient switchers, retaining rank 1. Their Lyapunov exponents, $\vp_{\textnormal{cc}}(q(1),q(2))$, can then be computed by Theorem~\ref{thm:stochastic-fitness} and maximized in $q(1)$ and $q(2)$ to obtain the optimal rank-1-strategy under fair comparison, which is a non-trivial convex combination for intermediate $s_1\cdot s_2$ as Figure~\ref{fig:phase_diagram_cc} suggests.

However, in general the optimal strategy under fair comparison might not be of rank 1, as Figure~\ref{fig:non-zero-det} suggests that non-zero determinants might further increase fitness in certain scenarios.

Interestingly, we see in Figure~\ref{fig:phase_diagram_cc} that mixed strategies can be uniquely super-critical (``strong fitness advantage''). This observation has no analogue in \cite{MS08}, since although they mention `hybrid' strategies (p.~1144), they do not provide any results for them.  Note that mixed strategies could be seen as a kind of bet-hedging on the level of switching strategies, and we thus suggest the term `second-level bet hedging'. Classical bet-hedging, first developed in the context of plants \cite{Co66}, but common also in (isogenic) microbial populations \cite{JHK11}, which refers to keeping phenotypic variability, may then be considered as `first level bet hedging'.

\subsection*{The phenotypic switching model of Dombry, Mazza and Bansaye and the hereditary vs.\ non-hereditary case.}
Regarding the results in \cite{DMB11} on branching processes in random environment,  recall that the authors deal with a general framework for \emph{phenotypic switches} between potentially many types, and with a much more general class of random environments than in our model (though still assumed to be stationary and ergodic). However, in some regards (when considering active and dormant states as different phenotypes that one can switch between), their model and results are also more restrictive than ours, and some scenarios of dormancy-related reproduction are not covered. To understand these differences, let us recall their distinction between \emph{hereditary} and \emph{non-hereditary} reproduction resp.\ switching strategies. In non-hereditary strategies, in a first step, the offspring numbers of individuals are sampled, and then, in a second step, \emph{independently}, the new phenotypes are attached to the offspring individuals. This case clearly disentangles reproduction and phenotype-allocation and allows to obtain a wealth of elegant results on the fitness and optimality of switching strategies. In contrast, the hereditary case does not feature this disentanglement, and type allocation may depend on the type of the parents. Because of this, this case is mathematically much harder to investigate. In fact, here, \cite{DMB11} provide no systematic results for the Lyapunov exponents of the system (though they give a bound for the `finite time growth rate').

Unfortunately, the mathematically tractable non-hereditary case already excludes our simple example for dormancy-related reproduction being the result of either binary fission or sporulation from Section~\ref{sec:mot-exp}, since here, the offspring number determines the offspring type (two in the case of fission, one in the case of sporulation).  It is still possible to transfer some of their machinery to the cases which in our models correspond to rank-1 matrices, but not to the rank-2 case.

Nevertheless it is interesting to compare some of the results  for the non-hereditary case with our results, at least on a qualitative level. In the case with spontaneous switching (related to the `no-sensing' case in the language of \cite{DMB11}), they show that there are situations where a strategy that produces several phenotypes can have a strong fitness advantage over the simple `single-type' strategy (`sleepless' without dormancy resp.\ phenotypic variability), cf.\ their Section 1.1.1. For the case with `sensing' (which corresponds to responsive switching) and for $s_1=s_2$, they show that in the presence of rarely changing environments, responsive switching is optimal, whereas in highly fluctuating environments, prescient switching dominates. Furthermore, a mixed strategy -- i.e. stochastic switching -- is optimal in intermediate regimes. This corresponds to our observation in Figure~\ref{fig:phase_diagram} (left). However, similarly to the iid case given on the line where $s_1=1-s_2$, this does not provide a full picture of the fitness landscape.

\subsection*{The role of relative reproductive costs and `weighted fair comparison'}
Note that while \cite{DMB11} do consider mixed switching strategies (in contrast to \cite{MS08}), their modeling always implicitly implies a `fair-comparison' of reproductive strategies (cf.\ their modeling with fixed type distributions $\Upsilon_{t,e}$ on p.~377), corresponding to our comparison with $\ga = 1$. If this assumption is violated, that is,  dormant offspring are either `more cost efficient' than active offspring ($\ga < 1$), or `more expensive' ($\ga > 1$), the picture regarding optimal strategies becomes very rich and exhibits novel effects. Note that situations in which dormant offspring are more expensive than active offspring could relate for example to the sporulation process of Bacillus subtilis, which takes much longer than producing an active offspring by binary fission \cite{PH04}, and thus leads to fewer dormant offspring per time unit, resulting in higher `effective' costs. On the other hand, plants often  produce many seeds at a low cost, and this is clearly the optimal strategy in the presence of extremely harsh environments (`winter') that effectively kills all `active' individuals from the current generation of a species.

In the present paper, the following picture emerges (cf.\ Figure~\ref{fig:phase_diagram:fair}): Under reduced costs for dormant individuals (here, $\ga = 1/2$), rule (1) still holds in a qualitative sense; however, the region where the sleepless case is optimal is further reduced, while the relation between the regions of the switching strategies does not seem to change qualitatively.  This suggests the following rule:

\begin{quote}
  \emph{(5)\hspace{1em} Low relative reproductive costs for dormant individuals may strongly increase the effectiveness of all switching strategies, not necessarily changing the interrelation between switching strategies.}
\end{quote}
Under increased costs for dormant offspring (here, $\ga = 2$), an entirely new effect appears. Here, the `sleepless' population can suddenly have a strong fitness advantage in moderately fluctuating environments, at the cost of the stochastic switcher.

\begin{quote}
  \emph{(6)\hspace{1em} High relative reproductive costs for dormant individuals may strongly increase the effectiveness of the sleepless strategy if environments fluctuate quickly.}
\end{quote}
Interestingly, both the responsive and the prescient strategies seem less severely affected by variable relative reproductive costs as both retain their strong advantages in Figure~\ref{fig:phase_diagram:fair} (right).

\begin{quote}
  \emph{(7)\hspace{1em} The responsive and the prescient switching strategies are relatively robust under moderate changes in relative reproductive costs in its region of dominance.}
\end{quote}
However, for strong relative fitness differences, the qualitative picture may change drastically. For example, in Figure~\ref{fig:gamma} (left) the separatrix between the regions of dominance of the responsive and the stochastic switcher performs a phase transition, where for $\ga=1/9$ it becomes a straight line. This invites a more comprehensive study of the sensitivity of the optimal strategies on the relative reproductive cost, which however seems beyond the scope of the present paper.

\subsection*{Stochastic vs.\ deterministic modeling}
Given the long tradition of stochastic vs.\ deterministic modeling in population dynamics, here represented by branching processes vs.\ dynamical systems, it is interesting to assess, at least rudimentarily, which similarities and differences of the conclusions under the respective modeling frameworks can be attributed to these modeling assumptions.

A first observation is that several qualitative results remain valid under both modeling assumptions (e.g.\ regarding rule 1). The mathematical reason is that the maximal Lyapunov exponent in the stochastic case depends on the mean matrices of the offspring distributions of the branching processes and thus typically agrees with those of the dynamical systems. We can imagine that optimal population strategies in deterministic environments might often be deterministic as well. However, as Figure~\ref{fig:phase_diagram} suggests, a \emph{truly} random strategy such as the stochastic switcher can only prevail in a \emph{truly} random environment, i.e.\ when neither high nor low frequency of environmental changes allows for a deterministic approximation of the environment. Hence, especially when investigating stochastic switching strategies one should always take into account random environments.

Differences do appear for example when taking the limit in extremely quickly fluctuating populations (as in \cite{MS08}), which is meaningless in our discrete-time model. This could be remedied by switching to continuous-time birth-death processes instead of branching processes, but having these extremely quick fluctuations seems not very realistic in either case.

We would expect strong differences (and in fact major advantages of stochastic modeling) in situations when population size may fluctuate strongly, and in particular may be very small, so that stochasticity has a strong effect. This could for example be the case in scenarios when new dormancy traits invade a resident population without this trait (as in \cite{BT20}), or infections at an early stage, but this has not been considered in the present paper.

Stochasticity is certainly also relevant when considering extinction probabilities, which by definition involve small population sizes. Related questions have been treated by Jost and Wang \cite{JW14}, which investigate the extinction probability of branching processes under optimal phenotype allocation.

When population size is reasonably large, seminal results of Kurtz \cite{Ku71} (see also the comprehensive theory in \cite{EK86}) show that finite-type birth-death processes (and thus also discrete-time branching processes under suitable conditions) converge uniformly to the corresponding dynamical systems as the population size tends to infinity. This can be generalized to spatial and measure-valued set-ups, see the (also seminal) paper by Fournier and M\'el\'eard \cite{FM04}.

\subsection{Outlook}
Note that our results are still  incomplete. For example, we are only able to provide exact results for certain classes of switching regimes, which however include important special cases. The other cases could be approximated by the methods outlined in Section~\ref{sec:rank-2-case}, or by simulation. The results are often in line with intuition, but beyond that give exact quantitative insights. 

Our study thus invites further research in several directions, which we now briefly outline. We distinguish between more theoretical / mathematical and empirical / biological tasks and project ideas, some of which are admittedly speculative.\\

\emph{On the mathematical side,} progress regarding the exact computation of Lyapunov exponents is certainly desirable, but this is known to be difficult and probably needs particular and sophisticated methods for particular switching strategies, depending on the algebraic properties of the underlying mean matrices. Systematic and comprehensive progress is thus still elusive.

A promising and more readily accessible task is to extend the modeling frame of our BGWPDRE. For example, one could involve much more general environmental processes (still stationary and ergodic), such as in \cite{DMB11}. Further, one might wish to switch from discrete time/generation branching processes to continuous-time birth-death processes. This also invites to switch to an `adaptive dynamics' related set-up, in which one could try to merge random environments and dormancy with competition and mutation. A starting point could be the recent model  on dormancy under competition in \cite{BT20}, extended by rates depending on the state of a (deterministically or  randomly fluctuating) environment. Having more than one species with potentially different dormancy strategies, perhaps even in a spatial set-up \`a la \cite{FM04}, could lead to truly ecological models. If necessary, these could be approached by simulation instead of rigorous analysis. Highly interesting in this context would also be to replace the `random' environment by treatment protocols to optimize e.g.\ the efficiency of anti-biotic treatment.\\

\emph{On the biological side}, it seems necessary to calibrate the above systems (and the more sophisticated models yet to be developed) to the behaviour of model species, where switching strategies  are known and parameters could be estimated. This invites experiments under controlled lab conditions, where, e.g., environmental conditions could be externally controlled. It would be very interesting to see in how far theoretically predicted patterns manifest themselves in these experiments. 

This goes hand-in-hand with theoretical assessments  in clinical set-ups. We think that inter-disciplinary cooperation could be rewarding here, and the recent work on stochastic individual based modeling of a certain immune-therapy of cancer \cite{G+19} could be seen as a promising example.

Finally, a very interesting field of research, combining mathematical and biological aspects, could be related to the effects of long-term changes in the distribution of the random environment (for example due to climate change). It is well-known that climate change can have a serious impact on seed banks, see e.g.\ \cite{Oo12}. From a mathematical point of view, it would be interesting to understand the robustness of switching methods, or even bet-hedging strategies on the level of switching strategies (by using mixtures of switching strategies), under changing environmental distributions. It seems important to understand and predict the presence of `trigger points', when the underlying systems might completely change their behaviour (e.g.\ from super-critical to sub-critical). Of course, mathematically this means that one would have to move away from the stationarity and ergodicity assumption of the random environment, which will pose significant challenges. A first way to approximate such scenarios could be to investigate environments whose distribution involves two different time-scales (preserving the stationarity assumption), and where the second time-scale is much longer than the first. Rescaling of the system may then lead to a separation of time-scales which is tractable, again with potentially interesting mathematical and ecological implications.

\section{Appendix}
\label{sec:proof-mot-exp}
In this appendix, we provide the proof of Proposition \ref{prop:motivating-exp} for reference.
\begin{proof}[Proof of Proposition~\ref{prop:motivating-exp}]
  For the classical BGW process $(X_n)$ with offspring distribution $Q_X$ we write $h\!: [0,1] \to [0,1]$, $s \mapsto h(s)=\mean[s^{X_1} \,|\, X_0 = 1]$ to denote the corresponding offspring probability generating function.  Set $h'(1)=\mu_X$ and $h(0)=Q_X(0)$.  Recall that, by assumption, the variance of $Q_X$ is finite.  It is well-known, cf.\ \cite[Theorem I.5.1]{AN72}, that the survival probability $\si_X$ is given by $1-x^\ast$, where $x^\ast$ is the smallest fixed point of $h_X$.  Furthermore it is known that $(\mu_X)^{-n}\prob\!\big[X_n>0\big]$ for $n\to\infty$ converges to a positive limit if $\mu_X<1$, as does $n\prob\!\big[X_n>0\big]$ if $\mu_X=1$ (cf. \cite[Corollary I.11.1]{AN72} and \cite[Theorem I.9.1]{AN72} respectively).  Since $X_n>0$ iff $T_{X}>n$, it follows that
  \begin{align*}
    \mean\!\big[T_{X}\big]
    \;=\;
    \sum_{n \geq 0}\prob\!\big[T_{X}>n\big]
    \;=\;
    \sum_{n\geq0}\prob\!\big[X_n>0\big]
  \end{align*}
  is infinite if $\mu_X=1$ and finite if $\mu_X<1$.

  Coming to $(Z_n)$, its offspring probability generating function is given by $g:[0,1]^2\to[0,1]^2$ with
  \begin{align*}
    g(s_1,s_2)
    &\;\ldef\;
      \begin{pmatrix}
        \mean\Big[s_1^{Z_1^1}s_2^{Z_1^2} \,\Big|\, Z_0=(1,0)\Big] \\[.5em]
        \mean\Big[s_1^{Z_1^1}s_2^{Z_1^2} \,\Big|\, Z_0=(0,1)\Big]
      \end{pmatrix}.
  \end{align*}
  Now, by \cite[Theorem V.3.2]{AN72}, $1 - \si_Z$ can be given as the first component of the smallest fixed point of $g$. Denoting $w = Q_Z^2(1,0)$ and $d = Q_Z^2(0,0)$ as in the example in Section~\ref{sec:mot-exp}, basic tranformations yield that $g(s_1,s_2) = (s_1,s_2)$ iff
  \begin{align*}
    \begin{pmatrix}
      g_1(s_1,s_2)\\
      s_2
    \end{pmatrix}
    &\;=\;
      \begin{pmatrix}
        s_1 \\
        \frac{d+ws_1}{d+w}
      \end{pmatrix},
  \end{align*}
  such that by monotonicity $1-\si_Z = g_1(1-\si_Z,\frac{d+w(1-\si_Z)}{d+w})$.
  From \eqref{eq:fair-comp-mot-exp} we obtain for $s\in[0,1]$ that
  \begin{align*}
    h(s)
    \;=\;
    1-\sum_{k\geq1}Q_X(k)(1-s^k)
    \;\leq\;
    g_1(s,s)
    \;<\;
    g_1(s,\tfrac{d+ws}{d+w}),
  \end{align*}
  which implies that $1-\si_Z \geq 1-\si_X$, i.e.\ $\si_Z \leq \si_X$, where the inequality is strict, if $\si_X > 0$.

  It remains to prove the results regarding $T_Z$.  For this, denoting by $m_1 = \mean[Z_1^1 \mid Z_0=(1,0)]$ and by $m_2 = \mean[Z_1^2 \mid Z_0=(1,0)]$, the mean matrix of the offspring distribution of $Z$ is given by
  \begin{align}
    M
    \;=\;
    \begin{pmatrix}
      m_1 & m_2 \\ 
      w & 1-w-d
    \end{pmatrix}\notag
  \end{align}
  along with its largest eigenvalue
  \begin{align}
    \vr
    \;=\;
    \vr(w,d)
    &\;\ldef\;
    \frac{1}{2}\,
    \Big(m_1 + 1-w-d + \sqrt{(m_1-(1-w-d))^2 + 4wm_2}\Big)
    \nonumber\\[0.5ex]
    &\;\geq\;
    \frac{1}{2}\, \Big(m_1 + (1-w-d) + \big|m_1 - (1-w-d)\big|\Big)
    \nonumber\\[1ex]
    &\;=\;
    \max\{m_1,1-w-d\}.
    \label{eq:rho-lower-bound}
  \end{align}
  Then, $(Z_n)$ survives with positive probability iff $\vr > 1$, while $\prob\!\big[|Z_n|>0\big]\xrightarrow{n\to\infty}0$, if $\vr \leq 1$ (cf.\ \cite[Theorem V.3.2]{AN72}).
  More importantly, for case \emph{(3)}, when $d$ is so small that $1-d > \mu_{Z,1}$, \eqref{eq:rho-lower-bound} implies $\vr(0,d) > \mu_{Z,1}$.  Thus, by continuity, there also is $w > 0$ such that $\vr(w,d) > \mu_{Z,1}$.
  
  We now apply \cite[Theorem~V.4.4]{AN72}: Since the offspring distributions are of finite variance, the second moment condition holds and the theorem implies
  \begin{align*}
    \lim_{n\to\infty} \vr^{-n} \prob\!\big[Z^1_n+Z^2_n>0\big]
    \;\in\;
    (0,\infty),
  \end{align*}
  which concludes the proof of \emph{(3)}.

  For the critical case \emph{(2)} note that $\vr \leq \max\{m_1+m_2,1-d\}$. (The maximum row sum can be seen as an operator norm and hence is an upper bound for all eigenvalues.)  Hence, in the case $m_1 + m_2 = \mu_{Z,1} < \mu_X = 1$ the proof is complete, since $\prob[T_Z > n] \approx \vr^n$.  At last, it remains to show that even if $m_1 + m_2 = 1$, $\vr < 1$.  Thus, letting $m_1 = 1-m_2$ note that $\vr = 1$ iff
  \begin{align*}
    \sqrt{(w+d-m_2)^2+4wm_2}
    \;=\;
    m_2 + w+d,
  \end{align*}
  which can only hold if either $d=0$ or $m_2=0$, both contradicting our assumptions.
\end{proof}

\textbf{Acknowledgements:} The authors gratefully acknowledge support by DFG SPP 1590 ``Probabilistic structures in evolution'', project BL 1105/5-1.
The authors also thank Matthias Hammer for valuable comments and discussions.
\\

\bibliographystyle{abbrv}
\bibliography{literature}

\end{document}